\newtheorem{thm}{Theorem}[section]
\newtheorem{remark}[thm]{Remark}
\newtheorem{pro}[thm]{Proposition}
\newtheorem{exam}[thm]{Example}
\newtheorem{defn}[thm]{Definition}
\newtheorem{lemma}[thm]{Lemma}
\newtheorem{que}[thm]{Question}
\numberwithin{equation}{section}
\begin{document}

\title{On the Spectral Properties of a Class of Planar Sierpinski Self-Affine Measures}
\author{Jia-Long Chen$^{1}$}
\author{Wen-Hui Ai$^{~2\mathbf{*}}$}
	
\address{$^1$ School of Mathematics, South China University of Technology, Guangzhou, 510641, P. R. China}
\address{$^2$ Key Laboratory of Computing and Stochastic Mathematics (Ministry of Education),
School of Mathematics and Statistics, Hunan Normal University, Changsha, Hunan 410081, P. R. China}
\email{jialongchen1@163.com}
\email{awhxyz123@163.com}
	\date{\today}
	\keywords{self-affine measure, spectral measure, orthonormal basis, non-spectral.}
	\subjclass[2010]{Primary 28A80; Secondary 42C05, 46C05}
\thanks{The research is supported in part by the NNSF of China (Nos. 12201206 and 12371072), the Hunan Provincial NSF (No. 2024JJ6301).\\
$^*$Corresponding author.}
	
	\begin{abstract}
		We investigate the spectral properties of a class of Sierpinski-type self-affine measures defined by
		\[
		\mu_{M,D}(\cdot) = p^{-1} \sum_{d \in D} \mu_{M,D}(M(\cdot) - d),
		\]
		where \( p \) is a prime number, \( M = \begin{bmatrix}
			\rho_1^{-1} & c \\
			0 & \rho_2^{-1}
		\end{bmatrix} \) is a real upper triangular expanding matrix, and \( D = \{d_0, d_1, \cdots, d_{p-1}\} \subset \mathbb{Z}^2 \) satisfying \( \mathcal{Z}(\widehat{\delta}_{D}) = \cup_{j=1}^{p-1} \left( \frac{j \bm{a}}{p} + \mathbb{Z}^2 \right) \) for some \( \bm{a} \in \mathcal{E}_{p}= \{ (i_1, i_2)^* : i_1,  i_2 \in [1, p-1] \cap \mathbb{Z} \} \), where \( \mathcal{Z}(\widehat{\delta}_{D}) \) denotes the set of zeros of \( \widehat{\delta}_{D} \) with \( \delta_{D} = \frac{1}{\# D} \sum_{d \in D} \delta_d \). When $\rho_1 = \rho_2$, we derive necessary and sufficient conditions for $\mu_{M,D}$ to both: $(i)$ possess an infinite orthogonal set of exponential functions, and $(ii)$ be a spectral measure. When no infinite orthogonal exponential system exists in $L^{2}(\mu_{M,D})$, we quantify the maximum number of orthogonal exponentials and provide precise estimates. For $\rho_1 \neq \rho_2$, with restricted digit sets $D$, we obtain a necessary and sufficient condition for $\mu_{M,D}$ to be a spectral measure.
	\end{abstract}

	\maketitle

\section{Introduction}

	Let \( \mu \) be a Borel probability measure on \( \mathbb{R}^n \) with compact support, and let \( L^2(\mu) \) denote the Hilbert space associated with \( \mu \). The problem of approximating functions in \( L^2(\mu) \) by ``well-behaved" functions has a long-standing history. This area connects various fields such as analysis, geometry, and topology. The most effective approximation occurs when \( L^2(\mu) \) possesses a basis consisting of complex exponentials, forming what is known as the Fourier basis. This work investigates the existence conditions for Fourier bases in  \( L^2(\mu) \), with special attention to singular measures $\mu$ on \( \mathbb{R}^n \). More precisely, we say that \( L^2(\mu) \) has a Fourier basis if there exists a countable subset \( \Lambda \subset \mathbb{R}^n \) such that the set of complex exponentials
	\[
	E_\Lambda = \left\{ e^{-2\pi i \langle \lambda, x \rangle} : \lambda \in \Lambda \right\}
	\]
	forms an orthonormal basis for \( L^2(\mu) \). In this case, we refer to \( \mu \) as a spectral measure, \( \Lambda \) as the spectrum of \( \mu \), and \( (\mu, \Lambda) \) as a spectral pair. Specifically, if the normalized Lebesgue measure restricted to a set \( \Omega \) is a spectral measure, we refer to \( \Omega \) as a spectral set.
	
In 1974, Fuglede established a significant connection between the existence of commuting self-adjoint partial differential operators and spectrality, leading to the formulation of the renowned spectral set conjecture in \cite{B1}. This conjecture is known to be false in dimensions \( n \geq 3 \)\cite{T1,KM1, KM2}. However interest in this conjecture is still alive.
	
Let \(\{\varphi_d(x)\}_{d \in D}\) be an iterated function system (IFS) defined by
	\[
	\varphi_d(x) = M^{-1}(x + d), \quad x \in \mathbb{R}^n, \, d \in D,
	\]
where \(M \in M_n(\mathbb{R})\) is an \(n \times n\) expanding real matrix (i.e., all eigenvalues of \(M\) have absolute values greater than 1), and \(D \subset \mathbb{R}^n\) is a finite set of digits. It is well known that there exists a unique non-empty compact set
	\[
	T :=T(M, D) = \left\{ \sum_{k=1}^{\infty} M^{-k} d_k : d_k \in D \right\}= \sum_{k=1}^{\infty} M^{-k} D
	\]
	such that $T = \cup_{d \in D} \varphi_d(T) $ \cite{HJ}.
	Additionally, there exists a unique probability measure $\mu := \mu_{M,D}$ supported on $T$ that satisfies
	\begin{equation}\label{1.1}
		\mu = \frac{1}{\#D} \sum_{d \in D} \mu \circ \varphi_d^{-1},
	\end{equation}
	where $\#D$ denotes the cardinality of $D$. The set \( T \) and the measure \( \mu_{M,D} \) are called the self-affine set (or attractor) and the self-affine measure, respectively. In particular, if $M$ is a scalar multiple of an orthonormal matrix, then $T$ and $\mu_{M,D}$ are called the self-similar set and self-similar measure, respectively.

In 1998, Jorgensen and Pedersen \cite{JP1} discovered the first singular and non-atomic spectral measure. They proved that the one fourth Cantor measure $\mu_{4}$ supported on $T=\{\sum_{i=1}^{\infty}4^{-i}d_{i}: \;d_{i}\in\{0,2\}\}$, is a spectral measure.
Since then, the study of singular spectral measures has flourished in the field of fractal analysis. For the spectrality of Bernoulli convolution in $\mathbb R$, it was fully resolved, see \cite{HL,Dai1,D2,DHL14}.
In \(\mathbb{R}^2\), the most work of the spectra measure was concentrated on the self-affine Sierpinski-type measures, see \cite{DL15,DFY,LDL}. In \cite{LDL},  Lu et al. investigated the self-affine measure generated by the upper triangular matrices and some ternary digit sets.
Recently, Yan \cite{Y1} extends the results of \cite{DFY} to the Moran measure generated by an expanding real diagonal matrix and some sets of $p$-ary numbers satisfying a certain zero condition, where $p$ is a prime. Fractal measures associated with this zero set have been extensively studied, yielding many significant results; see \cite{CYZ,CC,CLZ,WLS1} for further details.

Their results have inspired us to investigate the spectral properties of the self-affine measure
\( \mu_{M,D} \) with
\begin{equation}\label{1.2}
	M = \begin{bmatrix}
		\rho_1^{-1} & c \\
		0 & \rho_2^{-1}
	\end{bmatrix} \subset M_{2}(\mathbb{R}),
\quad D = \left\{
	d_0, d_1, \cdots, d_{p-1}
	\right\} \subset \mathbb{Z}^{2},
\end{equation}
where \( 0 < \rho_1, \rho_2< 1 \) and \( p \) is a prime number. Let $\widehat{\mu}$ be the Fourier transform of $\mu$, and \(\mathcal{Z}(f)\) the zeros of \(f\). The technique is to make use
of some explicit expressions of \( \mathcal{Z}(\widehat{\mu}_{M,D})\).
In this paper, we consider the set $D$ satisfying
\begin{equation}\label{x1.3}
		\mathcal{Z}(\widehat{\delta}_{D}) = \bigcup_{j=1}^{p-1} \left( \frac{j \bm{a}}{p} + \mathbb{Z}^2 \right), \bm{a} \in \mathcal{E}_{p}= \{ (i_1,  i_2)^* : i_1, i_2 \in [1, p-1] \cap \mathbb{Z} \}.
\end{equation}
Here $(i_1,  i_2)^*$ denotes the transposed conjugate of $(i_1,  i_2)$, $\delta_{D} = \frac{1}{\# D} \sum_{d \in D} \delta_d $ and $\delta_d $
represents the Dirac measure at \(d\).
Actually, many digit sets satisfy the form in equation \eqref{x1.3}. For example, let \( D_1 = \{0, 1, \cdots, N-1\} \) and \( D_2 = \{(0,0)^*, (1,0)^*, (0,1)^*\} \), by direct calculation, we have
	\( \mathcal{Z}(\widehat{\delta}_{D_{1}}) = \{ \frac{j}{N} + \mathbb{Z} : j = 1, 2, \cdots, N-1 \} \), \( \mathcal{Z}(\widehat{\delta}_{D_{2}}) = \{ \pm ( \frac{1}{3}, -\frac{1}{3} )^* + \mathbb{Z}^2 \} \).

Suppose that \( \rho_1= \rho_2\). We first establish the following two results, which provide necessary and sufficient conditions for \( L^{2}(\mu_{M, D}) \) to admit an infinite set of orthogonal exponential functions. If \( L^{2}(\mu_{M, D}) \) does not possess an infinite orthogonal set, we can estimate the maximal number of orthogonal exponential functions and determine its exact value.
	\begin{thm}\label{thm1.7}
		 Suppose that \( \rho_1= \rho_2= \rho\). Let \(\mu_{M,D}\) be given by \eqref{1.1}, \eqref{1.2}and \eqref{x1.3}. Then \(L^2(\mu_{M,D})\) admits an infinite orthogonal set of exponential functions if and only if \(\rho^{-1} = (\frac{t}{s})^{\frac{1}{r}}\) and $c=\kappa\rho^{-1}$ for some \(s, t, r \in \mathbb{N}\) with \(t \in p\mathbb{Z}\), \(\gcd(s, t) = 1\) and \(\kappa \in \mathbb{Q}\).
	\end{thm}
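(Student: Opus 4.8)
The plan is to convert orthogonality into a statement about $\mathcal{Z}(\widehat{\mu}_{M,D})$ and then decouple the two coordinates. Two exponentials $e_{\lambda},e_{\lambda'}$ are orthogonal in $L^{2}(\mu_{M,D})$ precisely when $\widehat{\mu}_{M,D}(\lambda-\lambda')=0$, so $E_{\Lambda}$ is orthogonal iff $(\Lambda-\Lambda)\setminus\{0\}\subset\mathcal{Z}(\widehat{\mu}_{M,D})$. Since $\widehat{\mu}_{M,D}(\xi)=\prod_{k\ge 1}\widehat{\delta}_{D}\big((M^{*})^{-k}\xi\big)$, condition \eqref{x1.3} gives $\mathcal{Z}(\widehat{\mu}_{M,D})=\bigcup_{k\ge 1}(M^{*})^{k}\big(\bigcup_{j=1}^{p-1}(\tfrac{j\bm a}{p}+\mathbb{Z}^{2})\big)$. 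Writing $\beta=\rho^{-1}$ and $M^{*}=\begin{bmatrix}\beta&0\\ c&\beta\end{bmatrix}$, one computes $(M^{*})^{k}=\begin{bmatrix}\beta^{k}&0\\ kc\beta^{k-1}&\beta^{k}\end{bmatrix}$, so the first coordinate of any point of $(M^{*})^{k}\mathcal{Z}(\widehat{\delta}_{D})$ is $\beta^{k}(\tfrac{ja_{1}}{p}+m)$ with $j\in\{1,\dots,p-1\}$ and $m\in\mathbb{Z}$. Because $1\le a_{1}\le p-1$ forces $p\nmid ja_{1}$, this first coordinate is never $0$; hence the first–coordinate projection $\pi_{1}$ is injective on any orthogonal set $\Lambda$, and the problem reduces to the one–dimensional question for $Y:=\pi_{1}(\Lambda)$, namely $Y-Y\setminus\{0\}\subset X:=\bigcup_{k\ge 1}\beta^{k}\cdot\tfrac1p(\mathbb{Z}\setminus p\mathbb{Z})$, with the influence of $c$ entirely postponed to the second coordinate.

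For sufficiency I would exhibit an explicit infinite orthogonal set of the simplest shape, $\Lambda=\{n\bm g:n\in\mathbb{Z}\}$ with a single generator $\bm g=(g_{1},g_{2})^{*}$, so that every nonzero difference is $d\bm g$, $d\in\mathbb{Z}\setminus\{0\}$. Assume $\beta=(t/s)^{1/r}$ with $t\in p\mathbb{Z}$, $\gcd(s,t)=1$, and $c=\kappa\beta$ with $\kappa\in\mathbb{Q}$; then restricting to scales $k$ divisible by $r$ makes $\beta^{k}\in\mathbb{Q}$ and $(M^{*})^{k}=\beta^{k}\begin{bmatrix}1&0\\ k\kappa&1\end{bmatrix}$ rational. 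Membership $d\bm g\in(M^{*})^{k}\mathcal{Z}(\widehat{\delta}_{D})$ is equivalent to $(M^{*})^{-k}(d\bm g)=d\beta^{-k}\big(g_{1},\,g_{2}-k\kappa g_{1}\big)^{*}\in\tfrac1p\mathbb{Z}^{2}$ with residue $\tfrac{j\bm a}{p}$. The role of $t\in p\mathbb{Z}$ is that $(M^{*})^{r}$ scales the first coordinate by $t/s$ and so raises its $p$–adic valuation by $v_{p}(t)\ge 1$; thus for each $d$ I can pick a scale $k=k(d)$, a multiple of $r$ growing with the denominator of $d\beta^{-k}g_{1}$, clearing exactly the factor $1/p$ and placing the first coordinate in $\tfrac1p\mathbb{Z}$ with a nonzero residue. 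Rationality of $\kappa$ keeps the second coordinate $d\beta^{-k}(g_{2}-k\kappa g_{1})$ rational, so a single choice of $g_{2}$ makes it land in $\tfrac1p\mathbb{Z}$ with the residue $\tfrac{ja_{2}}{p}$ matching the same $j$; doing this for all $d$ yields the infinite orthogonal set.

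For necessity I would start from an infinite orthogonal $\Lambda$ and use injectivity of $\pi_{1}$ to get an infinite $Y$ with $Y-Y\setminus\{0\}\subset X$. First, transcendental $\beta$ is impossible: for three points $0,y_{1},y_{2}$ the differences give $\beta^{k_{2}}l_{2}-\beta^{k_{1}}l_{1}=\beta^{k_{3}}l_{3}$ with $l_{i}\in\mathbb{Z}\setminus p\mathbb{Z}$, and $\mathbb{Q}$–independence of the powers of a transcendental $\beta$ forces $k_{1}=k_{2}=k_{3}$; propagating, all differences sit at one scale, the residues $l_{i}$ are then distinct modulo $p$, and $\#Y\le p$ — a contradiction. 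Hence $\beta$ is algebraic, and a finer analysis of the same relations in $\mathbb{Q}(\beta)$ confines the powers $\beta^{k}$ that occur to a single $\mathbb{Q}$–line, which is equivalent to $\beta^{r}=t/s\in\mathbb{Q}$ for some $r$; after rescaling this returns us to rational differences, and a $p$–adic valuation argument (if $p\nmid t$ then $pY$ has all pairwise differences $p$–adic units, forcing distinctness modulo $p$ and $\#Y\le p$) yields $p\mid t$. Finally the condition on $c$ is read off from the second coordinate: the entry $kc\beta^{k-1}$ of $(M^{*})^{k}$ couples first and second coordinates through the factor $c\beta^{-1}$, and imposing the consistency (cocycle) relation among the second coordinates of triples of points forces $c\beta^{-1}\in\mathbb{Q}$, i.e. $c=\kappa\rho^{-1}$ with $\kappa\in\mathbb{Q}$.

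I expect the main obstacle to be the algebraic heart of the necessity direction: upgrading the relations $\beta^{k_{2}}l_{2}-\beta^{k_{1}}l_{1}=\beta^{k_{3}}l_{3}$ from merely ``$\beta$ algebraic'' to the pure–root conclusion $\beta^{r}=t/s$, and then tracking $p$–adic valuations to extract $p\mid t$. A second, more bookkeeping, difficulty runs through both directions: the residue $j$ is shared by the two coordinates (both arise from $\tfrac{j\bm a}{p}$), so every construction and the cocycle argument must keep the first–coordinate residue $\tfrac{ja_{1}}{p}$ compatible with the second–coordinate residue $\tfrac{ja_{2}}{p}$ — and it is exactly this compatibility that breaks unless $\kappa=c\beta^{-1}$ is rational.
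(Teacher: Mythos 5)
Your necessity plan tracks the paper's proof closely: the observation that every element of $\mathcal{Z}(\widehat{\mu}_{M,D})$ has nonzero first coordinate (so the first--coordinate projection is injective on any orthogonal set), the reduction to the one--dimensional inclusion for $Y=\pi_1(\Lambda)$, the pure--root conclusion $\rho^{-1}=(t/s)^{1/r}$ with $p\mid t$, and the extraction of $c\rho\in\mathbb{Q}$ by comparing the two coordinates of a triple of differences are exactly the paper's steps. The paper handles the algebraic step you flag as the ``main obstacle'' by quoting Lemma \ref{lemma3.1} (from Deng) and its own Lemma \ref{lemma4.3} (which pins down that exactly two of the three scales $j_1,j_2,j_3$ coincide); your sketch is consistent with this but does not supply that argument, so as written the necessity direction rests on an unproved core that you would need to either prove or cite.

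The genuine gap is in sufficiency: the arithmetic progression $\Lambda=\{n\bm{g}:n\in\mathbb{Z}\}$ cannot work in general. Its difference set contains $d\bm{g}$ for \emph{every} $d\in\mathbb{Z}\setminus\{0\}$, and membership of $d\bm{g}$ in $(M^{*})^{k}\mathcal{Z}(\widehat{\delta}_{D})$ forces the first coordinate $d\beta^{-k}g_{1}$ to lie in $\tfrac{1}{p}(\mathbb{Z}\setminus p\mathbb{Z})$, i.e.\ (normalizing $g_{1}\in\mathbb{Q}$, which is forced up to a fixed power of $\beta$) to have $p$--adic valuation exactly $-1$ and nonnegative valuation at every other prime. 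Since only $k\in r\mathbb{Z}$ give rational $\beta^{-k}=(s/t)^{k/r}$, one needs $\tfrac{k}{r}\,v_{p}(t)=v_{p}(d)+v_{p}(g_{1})+1$; but $v_{p}(d)$ runs over all of $\mathbb{N}$ while the left side only runs over multiples of $w:=v_{p}(t)$, so for $p^{2}\mid t$ the difference $d\bm{g}$ with $d=p^{m}$ and $m+v_{p}(g_{1})+1\not\equiv 0\pmod{w}$ is never in the zero set. Even when $w=1$ the other prime factors of $t$ obstruct: one needs $(t/p^{w})^{k/r}\mid p\,d\,g_{1}$ with $k/r=v_{p}(dg_{1})+1$ unbounded, impossible for a fixed $g_{1}$ unless $t=\pm p$. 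This is precisely why the paper takes a \emph{geometric} family, $\Lambda=\{t^{pu\ell}\bm{a}/p:\ell\in\mathbb{N}\}$ with $\kappa=v/u$: its differences are $\tfrac{t^{pu\ell_{2}}\left(t^{pu(\ell_{1}-\ell_{2})}-1\right)}{p}\bm{a}$, where the factor $t^{pu(\ell_{1}-\ell_{2})}-1$ is coprime to $p$ and to $t$, so a single application of $(M^{*})^{-rpu\ell_{2}}$ places every difference in $\cup_{j}\bigl(\tfrac{j\bm{a}}{p}+\mathbb{Z}^{2}\bigr)$, and the exponent $pu\ell$ makes the off--diagonal contribution $rpu\ell\kappa a_{1}=rp\ell v a_{1}\in p\mathbb{Z}$, keeping the residues of the two coordinates matched to the same $j$. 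Replace your arithmetic progression by such a geometric sequence and the sufficiency goes through.
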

	\begin{thm}\label{thm1.8}
		Suppose that \( \rho_1= \rho_2= \rho\). Let \(\mu_{M,D}\) be given by \eqref{1.1} \eqref{1.2}and \eqref{x1.3}. If \( \rho^{-1}= (\frac{t}{s})^{\frac{1}{r}}\) for some \(s, t, r \in \mathbb{N}\) with \(\gcd(s, t) = 1\). Then the following statements hold:
		\begin{enumerate}
			\item[$(i)$] If \(c \neq \kappa \rho^{-1}\) for any \(\kappa \in \mathbb{Q}\), then there exist at most $p$ mutually orthogonal exponential functions in \(L^2(\mu_{M,D})\), and the number $p$ is the best.
			
			\item[$(ii)$] If \(c = \kappa \rho^{-1}\) for some \(\kappa \in \mathbb{Q}\) and \(t \notin p\mathbb{Z}\), then
			\begin{enumerate}
				\item[$(a)$] If \(s \notin p\mathbb{Z}\), then there exist at most $p$ mutually orthogonal exponential functions in \(L^2(\mu_{M,D})\), and the number $p$ is the best.
				
				\item[$(b)$] If \(s \in p\mathbb{Z}\), then there are any number of orthogonal exponential functions in \(L^2(\mu_{M,D})\).
			\end{enumerate}
		\end{enumerate}
	\end{thm}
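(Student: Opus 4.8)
The starting point is the standard criterion that two exponentials $e^{2\pi i\langle\lambda,x\rangle}$ and $e^{2\pi i\langle\lambda',x\rangle}$ are orthogonal in $L^{2}(\mu_{M,D})$ exactly when $\lambda-\lambda'\in\mathcal{Z}(\widehat{\mu}_{M,D})$, together with the factorization $\mathcal{Z}(\widehat{\mu}_{M,D})=\bigcup_{k\ge 1}(M^{*})^{k}\,\mathcal{Z}(\widehat{\delta}_{D})$. Writing $\beta=\rho^{-1}$ and $M^{*}=\beta I+N$ with $N=\begin{bmatrix}0&0\\ c&0\end{bmatrix}$ nilpotent, I would first record $(M^{*})^{k}=\begin{bmatrix}\beta^{k}&0\\ kc\beta^{k-1}&\beta^{k}\end{bmatrix}$. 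Feeding the hypothesis \eqref{x1.3} into the factorization, every $\xi\in\mathcal{Z}(\widehat{\mu}_{M,D})$ then has first coordinate $\xi_{1}=\beta^{k}\tfrac{\ell}{p}$ with $k\ge 1$ and $p\nmid\ell$ (since $p\nmid ji_{1}$ for $j,i_{1}\in[1,p-1]$), and second coordinate $\xi_{2}=kc\rho\,\xi_{1}+\beta^{k}\tfrac{\ell'}{p}$ with $p\nmid\ell'$. These two normal forms carry the whole argument. For the matching lower bound (``the number $p$ is the best'') I would give one construction valid in every case: the set $\Lambda_{0}=\{(M^{*})\tfrac{j\bm{a}}{p}:j=0,\dots,p-1\}$. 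For $j\ne j'$ the difference is $(M^{*})\tfrac{(j-j')\bm{a}}{p}$, and since $(j-j')\bmod p\in[1,p-1]$ one checks $\tfrac{(j-j')\bm{a}}{p}\in\mathcal{Z}(\widehat{\delta}_{D})$ using $\gcd(i_{1},p)=\gcd(i_{2},p)=1$, so the difference lies in $(M^{*})\mathcal{Z}(\widehat{\delta}_{D})\subset\mathcal{Z}(\widehat{\mu}_{M,D})$. This yields $p$ mutually orthogonal exponentials, matching the upper bounds in $(i)$ and $(ii)(a)$.

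The heart is the upper bound ``at most $p$''. Suppose, for contradiction, that $\{0,\lambda_{1},\dots,\lambda_{p}\}$ are mutually orthogonal; then each $\lambda_{i}$ and each $\lambda_{i}-\lambda_{j}$ is a zero and so carries the normal forms above, with levels $k_{i}$ and $k_{ij}$. In case $(i)$, where $c\rho\notin\mathbb{Q}$ (equivalently $c\ne\kappa\rho^{-1}$ for all $\kappa\in\mathbb{Q}$), the second normal form gives $\tfrac{\xi_{2}}{\xi_{1}}-kc\rho=\tfrac{\ell'}{\ell}\in\mathbb{Q}$, so the coset $\tfrac{\xi_{2}}{\xi_{1}}+\mathbb{Q}$ recovers the level $k$ uniquely (the map $m\mapsto mc\rho+\mathbb{Q}$ being injective). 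Substituting this rigidity into the two defining identities for a difference $\lambda_{i}-\lambda_{j}$ and separating the $\mathbb{Q}$-part from the $c\rho$-part (treating the sub-cases $c\rho\in\mathbb{Q}(\beta)$ or not) forces a relation of the shape $(k_{i}-k_{ij})a_{i}=(k_{j}-k_{ij})a_{j}$ on the first coordinates $a_{i}=(\lambda_{i})_{1}$; since $a_{i}/a_{j}\in\beta^{\,k_{i}-k_{j}}\mathbb{Q}$, a short divisibility analysis (according to whether $r\mid k_{i}-k_{j}$) collapses the admissible configurations to those whose first coordinates reduce to \emph{distinct} classes of $\mathbb{F}_{p}^{\times}$, leaving at most $p-1$ nonzero labels.

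In case $(ii)(a)$, where $c=\kappa\rho^{-1}$ but $t,s\notin p\mathbb{Z}$, I would instead fix a valuation $w$ on $\mathbb{Q}(\beta)$ extending $v_{p}$; because $\beta^{r}=t/s$ is a $p$-adic unit, $w(\beta)=0$, whence $w(\xi_{1})=-1$ for every zero. Here the second normal form reads $\xi_{2}-k\kappa\,\xi_{1}=\beta^{k}\tfrac{\ell'}{p}$, so the second coordinate stays slaved to the first in the same form. Writing $k_{i}=q_{i}r+\theta_{i}$, the requirement that $\lambda_{i}-\lambda_{j}$ again have $w$-value $-1$ first forces a common $\theta$ (distinct $\theta$'s would produce two independent $\beta$-monomials, which no single zero admits), and then, after reducing $\beta^{-\theta}p\,(x_{i}-x_{j})$ modulo $p$, forces $\tau^{q_{i}}\overline{\ell_{i}}\ne\tau^{q_{j}}\overline{\ell_{j}}$ in $\mathbb{F}_{p}$, where $\tau=\overline{t/s}$. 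These are pairwise distinct elements of $\mathbb{F}_{p}^{\times}$, so again at most $p-1$ nonzero labels, i.e.\ at most $p$ in all.

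For $(ii)(b)$, where $c=\kappa\rho^{-1}$, $t\notin p\mathbb{Z}$ but $s\in p\mathbb{Z}$, the same valuation now has $w(\beta)=\tfrac1r v_{p}(t/s)<0$, so the mod-$p$ coincidence that obstructed stacking in $(ii)(a)$ disappears: in a difference of two zeros at distinct levels the strictly more negative valuation dominates automatically, placing the difference at the correct (higher) level with no residue constraint. I would exploit this to build, for each $N$, an orthogonal set of size $N$ by placing labels on a strictly increasing tower of levels $k_{1}<\dots<k_{N}$ (all $\equiv 0\bmod r$) and using the rational coupling $c=\kappa\rho^{-1}$ to reconcile both coordinates of every pairwise difference within a single $(M^{*})^{k}\mathcal{Z}(\widehat{\delta}_{D})$. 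By Theorem~\ref{thm1.7} no \emph{infinite} such set exists (as $t\notin p\mathbb{Z}$), so the conclusion ``any number'' is exactly sharp: arbitrarily large, never infinite.

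I expect the main obstacle to be the multi-level bookkeeping in the upper bounds: a priori the labels $\lambda_{i}$ and their differences live at different scaling levels $k$, and both coordinates must be reconciled simultaneously. In case $(i)$ this is tamed by the level-pinning from $c\rho\notin\mathbb{Q}$, and in $(ii)(a)$ by the uniform value $w(\xi_{1})=-1$; in each instance the delicate point is showing that the level relation leaves no room beyond a single residue class per nonzero label. The second nontrivial piece is the explicit tower in $(ii)(b)$: verifying $(M^{*})^{-k}(\lambda_{i}-\lambda_{j})\in\mathcal{Z}(\widehat{\delta}_{D})$ requires clearing the $t^{\,q}$- and $s^{\,q}$-type denominators produced by $\beta^{\pm k}$, which is precisely where $p\mid s$ (hence arbitrarily high $p$-divisibility) together with the freedom in the $\mathbb{Z}^{2}$-parts of the chosen digits is used. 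A minor separate check is whether $X^{r}-t/s$ is irreducible over $\mathbb{Q}$; if not, one repeats the monomial-independence step with the true degree of $\beta$, which only tightens the bookkeeping.
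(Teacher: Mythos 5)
Your normal forms for the zero set, your lower-bound set $\{M^{*}\tfrac{j\bm{a}}{p}:0\le j\le p-1\}$, and your argument for $(ii)(a)$ are all sound and essentially coincide with the paper's: the paper's ``Case I'' is exactly your residue computation (two of the quantities $\tau^{q_i}\overline{\ell_i}$ must coincide in $\mathbb{F}_p^{\times}$ by pigeonhole, and the first-coordinate identity for the corresponding difference then has a left side divisible by $p$ and a right side that is not), phrased by clearing denominators rather than via a valuation. The observation that this same first-coordinate argument needs nothing about $c$ is also how the paper handles the sub-case $s,t\notin p\mathbb{Z}$ of $(i)$.

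The genuine gap is in the upper bound of $(i)$ when $p$ divides $s$ or $t$, which your level-pinning sketch does not close. There the first-coordinate residues all vanish (or the analysis degenerates), so the bound must come from the second coordinate, and your chain ``separate the $\mathbb{Q}$-part from the $c\rho$-part $\Rightarrow$ $(k_i-k_{ij})a_i=(k_j-k_{ij})a_j$ $\Rightarrow$ at most $p-1$ classes'' is asserted rather than proved. Two specific missing steps: (1) the separation is only legitimate after you know $k_i\equiv k_j\equiv k_{ij}\pmod r$ (so that every $\beta^{k}$ appearing is a rational multiple of one fixed $\beta^{b}$, whence a nonzero ``bracket'' forces $\kappa\in\mathbb{Q}$ outright); you invoke this congruence in $(ii)(a)$ but not in $(i)$, and your fallback sub-case $c\rho\in\mathbb{Q}(\beta)\setminus\mathbb{Q}$ is left unresolved. (2) Even granted the relation, concluding ``at most $p-1$ nonzero labels'' requires showing that cross-level orthogonality is impossible in this regime (via a valuation argument pinning $k_{ij}$ to $\min$ or $\max$ of $k_i,k_j$ according to whether $p\mid t$ or $p\mid s$, after which the bracket cannot vanish and one extracts $\kappa\in\mathbb{Q}$, the contradiction), and that same-level elements must carry distinct residues $l_i$. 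The paper does precisely this: pigeonhole on the $p-1$ classes $\mathcal{H}_j$, Lemma~\ref{lemma4.3}(ii) to force two of the three levels equal, and then an explicit formula exhibiting $\kappa$ as a rational number. Separately, for $(ii)(b)$ your paragraph describes the goal but not the construction; the content of the paper's proof is the explicit tower $\Lambda_n^N=\bigcup_{n=1}^{N}\{(M^{*})^{purn}\,p^{-1}t^{pu(N-n)}s_1^{pu(n-1)}\bm{a}\}$ (with $s=p^{\ell}s_1$, $\kappa=v/u$), whose scalar prefactors are chosen exactly so that every pairwise difference factors through a single $(M^{*})^{k}\mathcal{Z}(\widehat{\delta}_{D})$; ``using the rational coupling to reconcile both coordinates'' restates what must be verified rather than verifying it.
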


The following result establishes a necessary and sufficient condition for \(\mu_{M, D}\) to be a spectral measure when \(\rho_1 = \rho_2\). Consequently, \cite[Conjecture 4.2]{CWZ} is incorrect, see the counterexample \ref{ex6.1}.

\begin{thm}\label{thm1.5}
	Suppose that \( \rho_1= \rho_2= \rho\). Let \(\mu_{M,D}\) be given by \eqref{1.1} \eqref{1.2}and \eqref{x1.3}. Then, \( \mu_{M,D} \) is a spectral measure if and only if \( \rho^{-1} \in p\mathbb{Z} \) and \( c \in  \{ \frac{t}{s} : t \in p\mathbb{Z}, \gcd(t, s) = 1 \}\cup\{ 0 \} \).
\end{thm}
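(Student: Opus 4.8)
The plan is to establish the two implications by different means: sufficiency through an explicit Hadamard-triple construction after clearing denominators, and necessity by refining the orthogonality information already extracted in Theorems \ref{thm1.7} and \ref{thm1.8}.

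For sufficiency, assume $\rho^{-1}=N\in p\mathbb{Z}$ and $c=t_c/s_c$ in lowest terms with $t_c\in p\mathbb{Z}$ (the case $c=0$ being $t_c=0,\ s_c=1$). Since $c$ may fail to be an integer, I would first conjugate the pair $(M,D)$ by $P=\diag(s_c,1)$: this replaces $M$ by the \emph{integer} expanding matrix $\widetilde M=PMP^{-1}$, whose entries are $N,t_c,0,N$, and replaces $D$ by $\widetilde D=PD\subset\mathbb{Z}^2$, while $\mu_{\widetilde M,\widetilde D}=\mu_{M,D}\circ P^{-1}$ so that spectrality is unaffected. Because $\widehat{\delta}_{\widetilde D}(\xi)=\widehat{\delta}_{D}(P^{*}\xi)$, the zero condition \eqref{x1.3} is transported to $\mathcal{Z}(\widehat{\delta}_{\widetilde D})=\bigcup_{j=1}^{p-1}\bigl(\tfrac{j}{p}(P^{*})^{-1}\bm a+(P^{*})^{-1}\mathbb{Z}^2\bigr)$, so $\widetilde D$ is still a complete residue system modulo $p$. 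I would then produce a dual set $\widetilde L=\{\widetilde\ell_0,\dots,\widetilde\ell_{p-1}\}$ by imposing $(\widetilde M^{*})^{-1}\widetilde\ell_j\equiv \tfrac{j}{p}(P^{*})^{-1}\bm a \pmod{(P^{*})^{-1}\mathbb{Z}^2}$ and solving for the remaining integer parameters; the congruences $N\equiv t_c\equiv 0\pmod p$ together with $\gcd(s_c,p)=1$ are precisely what allow the $\widetilde\ell_j$ to be taken in $\mathbb{Z}^2$. A direct check shows $(\widetilde M,\widetilde D,\widetilde L)$ is a Hadamard triple, so by the theorem of Dutkay, Haussermann and Lai that Hadamard triples generate spectral self-affine measures, $\mu_{\widetilde M,\widetilde D}$, and hence $\mu_{M,D}$, is a spectral measure.

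For necessity, suppose $\mu_{M,D}$ is spectral. Its spectrum is infinite, so $L^2(\mu_{M,D})$ carries an infinite orthogonal set of exponentials; Theorem \ref{thm1.7} then gives $c=\kappa\rho^{-1}$ with $\kappa\in\mathbb{Q}$ and $\rho^{-1}=(t/s)^{1/r}$ with $t\in p\mathbb{Z}$, $\gcd(s,t)=1$, while Theorem \ref{thm1.8} eliminates the parameter ranges supporting only finitely many orthogonal exponentials. What remains is to upgrade $\rho^{-1}=(t/s)^{1/r}$ to $\rho^{-1}\in p\mathbb{Z}$ and to sharpen the condition on $c$. The decisive point is to show that $\rho^{-1}$ is a rational integer divisible by $p$, and I would approach it through the completeness identity $\sum_{\lambda\in\Lambda}|\widehat{\mu}_{M,D}(\xi+\lambda)|^2\equiv 1$: orthogonality confines $\Lambda-\Lambda$ to $\bigcup_{k\ge 1}(M^{*})^k\bigl(\tfrac{j\bm a}{p}+\mathbb{Z}^2\bigr)$, and completeness forces $\Lambda$ to be generated by a self-affine tower over a dual digit set, which in turn requires $M^{*}$ to stabilize the rational lattice carrying that tower. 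Lattice stability renders the characteristic polynomial $x^2-2\rho^{-1}x+\rho^{-2}$ integral, so $2\rho^{-1},\rho^{-2}\in\mathbb{Z}$ and hence $\rho^{-1}\in\mathbb{Z}$; since $t\in p\mathbb{Z}$ forces $s=1$ and $\rho^{-r}=t$, primality of $p$ gives $\rho^{-1}\in p\mathbb{Z}$. I expect this lattice-stability/integrality step to be the main obstacle, because it is exactly where an infinite orthogonal set must be shown not to extend to a complete basis once the scales $\rho^{-k}=\rho^{-1\cdot k}$ are irrational, so that finite orthogonality is no longer enough and one must control the global arithmetic structure of $\Lambda$.

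Finally, with $\rho^{-1}\in p\mathbb{Z}$ established and $c=\kappa\rho^{-1}$, $\kappa\in\mathbb{Q}$, I would pin down $c$ by reversing the sufficiency construction in the conjugated integer model: spectrality forces a Hadamard triple $(\widetilde M,\widetilde D,\widetilde L)$ with $\widetilde L\subset\mathbb{Z}^2$ whose dual digits realize the cosets $\tfrac{j}{p}\bm a$ under $(M^{*})^{-1}$, and the integrality computation carried out for sufficiency, now read as a constraint on $\tfrac1p M^{*}\bm a=\bigl(\tfrac{Na_1}{p},\tfrac{ca_1+Na_2}{p}\bigr)$, shows that such $\widetilde L$ can exist only when $p\mid ca_1$. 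As $a_1\in[1,p-1]$ is prime to $p$, writing $c=t_c/s_c$ in lowest terms forces $p\mid t_c$ (the case $c=0$ included), i.e.\ $c\in\{t/s:t\in p\mathbb{Z},\ \gcd(t,s)=1\}\cup\{0\}$, which completes the necessity direction and the proof.
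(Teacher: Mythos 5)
Your sufficiency argument is sound and is essentially the paper's own (Lemma \ref{lemma 4.1}): conjugate by a diagonal integer matrix to clear the denominator of \(c\), exhibit an explicit dual set \(\widetilde L\subset\mathbb{Z}^2\) using \(p\mid N\), \(p\mid t_c\), \(\gcd(s_c,p)=1\), and invoke Lemma \ref{lem2.3}. The necessity direction, however, has two genuine gaps, and they occur exactly at the two places you flag as delicate.

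First, the step ``completeness forces \(\Lambda\) to be generated by a self-affine tower over a dual digit set, which requires \(M^{*}\) to stabilize a rational lattice, hence the characteristic polynomial is integral and \(\rho^{-1}\in\mathbb{Z}\)'' is not an argument but a restatement of what must be proved. Spectra of these measures are not lattices: by Proposition \ref{pro 3.4} the first coordinates of a spectrum form a tree-labelled set \(\eta\rho^{-1}p^{-1}\gamma^{*}(\Xi')\) built from a selection map, and no known principle upgrades the completeness identity \(Q_{\mu,\Lambda}\equiv 1\) to \(M^{*}\)-invariance of a rational lattice containing \(\Lambda\). The paper's route is quantitative rather than algebraic: Proposition \ref{pro 3.4} (via Deng's Lemma \ref{lemma3.1} and the Dai--He--Lau structure theorem, Lemma \ref{lemma3.3}) first yields \(\rho=\frac{s}{t}\) with \(p\mid t\); then, to kill \(s>1\), one shows (Proposition \ref{pro7.1}, resting on Proposition \ref{pro 3.6} and the decay estimate of Lemma \ref{lemma 2.8}) that the tails satisfy \(|\widehat{\mu}_{>n}(\xi)|\le \alpha(\ln|\rho^{n}\xi_1|)^{-\beta}\), while any candidate spectrum element indexed by a word of length \(>n\) has first coordinate of size at least \(\frac{|d_{1,1}|(t^{n}+1)}{ps}\); summing and applying Lemma \ref{lemma 2.7}(ii) shows no maximal orthogonal set can be complete. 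Nothing in your proposal supplies this step, and the ``infinite orthogonal set exists'' information from Theorems \ref{thm1.7} and \ref{thm1.8} cannot replace it, since for \(s>1\) (e.g.\ \(\rho^{-1}=t/s\) with \(p\mid t\), \(s>1\)) infinitely many orthogonal exponentials do exist yet the measure is non-spectral.

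Second, your derivation of \(p\mid t_c\) assumes that ``spectrality forces a Hadamard triple \((\widetilde M,\widetilde D,\widetilde L)\) with \(\widetilde L\subset\mathbb{Z}^2\).'' That is the converse of Lemma \ref{lem2.3} and is not a theorem one may use (whether spectral self-affine measures must arise from Hadamard triples is open in general). The paper instead argues directly on the zero sets \(\mathcal{L}_j=\mathcal{Z}(\widehat{\delta}_{M^{-j}D})\): assuming the condition on \(c\) fails, Lemma \ref{lemma 2.2} and Lemma \ref{lemma 4.2} produce \(\lambda_1\in\Lambda\cap\mathcal{L}_1\) and \(\lambda_2\in\Lambda\cap\mathcal{L}_2\) with \(\lambda_2-\lambda_1\in\mathcal{L}_1\), and comparing second coordinates forces \(t\frac{v}{u}(s_{2,1}+pk_{2,1})\in p\mathbb{Z}\), hence \(p^{\ell_1+1}\mid t\), a contradiction. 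You would need an argument of this kind (a divisibility constraint extracted from actual spectrum elements, not from a hypothetical dual digit set) to close the necessity direction.
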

For the case \(\rho_1 \neq \rho_2\), let \(d_0 = \bm{0}\), \(d_1 = (d_{1,1}, 0)^*\neq\bm{0}\), then
 \begin{equation}\label{eq1.3}
 D = \{d_0, d_1, \cdots, d_{p-1}\}= \left\{
 \begin{pmatrix} 0 \\ 0 \end{pmatrix},
 \begin{pmatrix} d_{1,1} \\ 0 \end{pmatrix},
 \begin{pmatrix} d_{2,1} \\ d_{2,2} \end{pmatrix},\cdots,
 \begin{pmatrix} d_{p-1,1} \\ d_{p-1,2} \end{pmatrix}
 \right\} \subset \mathbb{Z}^2.
 \end{equation}
Define
 \[
 R := \begin{bmatrix} \frac{1}{d_{1,1}} & \frac{c}{d_{1,1}(\rho_1^{-1} - \rho_2^{-1})} \\ 0 & 1 \end{bmatrix},
 \]
 and
 \[
 M'=RMR^{-1} = \begin{bmatrix} \rho_1^{-1} & 0 \\ 0 & \rho_2^{-1} \end{bmatrix}, \ \ \ D' = RD.
 \]
 It is easy to check that \( \mu_{M,D} \) and \( \mu_{M', D'} \) have the same spectrality. Let \( c' = \frac{c}{d_{1,1}(\rho_1^{-1} - \rho_2^{-1})} \), \( c'' = d_{1,1} c' \). For
 $\bm{a}= (a_1, a_2)^*  \in \mathcal{E}_{p} = \{ (i_1,  i_2)^* : i_1, i_2 \in [1, p-1] \cap \mathbb{Z}\}$, define
 \begin{equation}\label{1.3}
  E_{\bm{a}} := \left\{ \frac{h}{l} : la_2 - ha_1 \in \mathbb{Z} \setminus p\mathbb{Z}, \gcd(h, l) = 1 \right\} \cup \{0\}.
 \end{equation}
 With this definition, we establish the following result.
\begin{thm}\label{thm1.2}
 	Suppose that \( \rho_1 \neq \rho_2 \). Let \(\mu_{M,D}\) be given by \eqref{1.1}, \eqref{1.2}, \eqref{x1.3} and \eqref{eq1.3}. If \( c'' \notin \mathbb{Q} \), then \( \mu_{M,D} \) is not a spectral measure.
\end{thm}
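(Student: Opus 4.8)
The plan is to exploit the conjugation already recorded in the statement. Since \(\mu_{M,D}\) and \(\mu_{M',D'}\) have the same spectrality and \(M'=\diag(\rho_1^{-1},\rho_2^{-1})\) is diagonal, I would work throughout with \(\mu_{M',D'}\). First I would make the zero set explicit. From \(\widehat{\delta}_{D'}(\xi)=\widehat{\delta}_{D}(R^{*}\xi)\) one gets \(\mathcal Z(\widehat{\delta}_{D'})=(R^{-1})^{*}\,\mathcal Z(\widehat{\delta}_{D})\), and since \(\mathcal Z(\widehat{\mu}_{M',D'})=\bigcup_{k\ge 1}(M'^{*})^{k}\,\mathcal Z(\widehat{\delta}_{D'})\), a direct computation using \((R^{-1})^{*}=\left[\begin{smallmatrix} d_{1,1} & 0 \\ -c'' & 1\end{smallmatrix}\right]\) shows that every element of \(\mathcal Z(\widehat{\mu}_{M',D'})\) has the form
\[
\Big(\rho_1^{-k}d_{1,1}u_1,\ \rho_2^{-k}(u_2-c''u_1)\Big),\qquad u_1\in\tfrac{ja_1}{p}+\mathbb Z,\ u_2\in\tfrac{ja_2}{p}+\mathbb Z,
\]
with \(k\ge 1\) and a common \(j\in[1,p-1]\) for \(u_1,u_2\).

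The engine of the argument is the following observation, which is where \(c''\notin\mathbb Q\) enters. Because \(a_1\not\equiv 0\pmod p\), we always have \(u_1\neq 0\), so \(u_2-c''u_1\) is irrational; hence every nonzero zero has both coordinates nonzero, i.e.\ \(\mathcal Z(\widehat{\mu}_{M',D'})\) meets the two coordinate axes only at the origin. Equivalently, the marginals are \emph{zero free}: \(\widehat{\mu}_{M',D'}(\xi_1,0)\neq 0\) and \(\widehat{\mu}_{M',D'}(0,\xi_2)\neq 0\) for all \(\xi_1,\xi_2\). In particular, for any orthogonal exponential set \(\Lambda\) (a fortiori for a spectrum), the coordinate projections \(\lambda\mapsto\lambda_1\) and \(\lambda\mapsto\lambda_2\) are injective, so distinct spectral points are separated in each coordinate.

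Assume now, for contradiction, that \(\mu_{M',D'}\) is spectral with spectrum \(\Lambda\ni\bm 0\); being the spectrum of a non-atomic measure, \(\Lambda\) is infinite and, by completeness, \(Q_\Lambda(\xi)=\sum_{\lambda\in\Lambda}|\widehat{\mu}_{M',D'}(\xi-\lambda)|^{2}\equiv 1\). I would feed the explicit zeros into the difference condition \(\Lambda-\Lambda\subseteq\mathcal Z(\widehat{\mu}_{M',D'})\cup\{\bm 0\}\): writing \(\lambda,\lambda',\lambda-\lambda'\) in the parametrization above, the first coordinates yield \(\rho_1^{-k}u_1-\rho_1^{-k'}u_1'=\rho_1^{-k''}u_1''\), while the second coordinates, after grouping the \(c''\)-terms, give a relation of the form \(c''A=B\) with \(A,B\) assembled from the \(\rho_2^{-k}u\)'s. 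The decisive point is that completeness cannot hold unless the scaling \(M'^{*}\) is commensurable with the (rational) periodicity lattice \(\Gamma=(R^{-1})^{*}\mathbb Z^{2}\) underlying \(\mathcal Z(\widehat{\delta}_{D'})\); unwinding the conjugation via \(M'^{*}=(R^{*})^{-1}M^{*}R^{*}\) gives \(M'^{*}\Gamma=(R^{*})^{-1}M^{*}\mathbb Z^{2}\), so the compatibility \(M'^{*}\Gamma\subseteq\Gamma\) forces \(M^{*}\mathbb Z^{2}\subseteq\mathbb Z^{2}\), i.e.\ \(M\) to be rationally structured, and in particular \(c''=c/(\rho_1^{-1}-\rho_2^{-1})\in\mathbb Q\), contradicting \(c''\notin\mathbb Q\). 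Concretely, this compatibility is exactly what turns the coupled level relations into the mod-\(p\) membership condition defining \(E_{\bm a}\), and \(E_{\bm a}\subset\mathbb Q\).

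The hard part is precisely this last step: one must exclude \emph{every} spectrum, not merely those of canonical tree/Hadamard form, while \(\rho_1,\rho_2\) are a priori arbitrary reals, so one cannot separate rational from irrational parts by naively comparing coefficients of \(\rho_i^{-k}\). I would resolve this by first establishing the necessary arithmetic constraint for the mere existence of an infinite orthogonal set (playing the role that \(\rho^{-1}=(t/s)^{1/r}\) plays in Theorem \ref{thm1.7}), which confines \(\rho_1^{-1},\rho_2^{-1}\) to roots of rationals and places all relevant quantities in a controlled number field; within that field the \(\mathbb Q\)-linear independence of \(1\) and \(c''\) makes the coefficient matching rigorous and isolates \(c''\), forcing \(c''\in E_{\bm a}\subset\mathbb Q\). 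This passage from completeness to a rational (mod-\(p\)) condition on \(c''\) is the main obstacle.
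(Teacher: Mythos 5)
Your setup (passing to the diagonal conjugate \(\mu_{M',D'}\) and writing out \(\mathcal Z(\widehat{\mu}_{M',D'})\) explicitly, as in \eqref{2.4}) matches the paper, and your observation that irrationality of \(c''\) makes both coordinates of every zero nonzero is correct, though the paper does not need it. But the step you yourself flag as decisive is a genuine gap, and as stated it is false. You assert that completeness of \(E_\Lambda\) forces \(M'^{*}\Gamma\subseteq\Gamma\), hence \(M^{*}\mathbb Z^{2}\subseteq\mathbb Z^{2}\). Spectrality implies no such lattice compatibility: Theorem \ref{thm1.3} of the paper produces spectral measures \(\mu_{M,D}\) with \(\rho_1^{-1}\in p\mathbb Z\), \(c''\in\mathbb Q\setminus E_{\bm a}\), and \emph{no} constraint whatsoever on \(\rho_2\), so \(\rho_2^{-1}\) may be irrational and \(M^{*}\mathbb Z^{2}\not\subseteq\mathbb Z^{2}\). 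Your fallback --- first confining \(\rho_1^{-1},\rho_2^{-1}\) to roots of rationals via the existence of an infinite orthogonal set, then matching coefficients in a number field --- also does not go through for \(\rho_2\): the first-coordinate argument (the analogue of Lemma \ref{lemma3.1}) controls only \(\rho_1\), while the second coordinates are contaminated by the unknown irrational \(c''\), so no a priori arithmetic constraint on \(\rho_2\) is available. Controlling \(\rho_2\) is exactly the hard point, and your proposal contains no mechanism for it.

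The paper's proof supplies two ingredients you are missing. First, Lemma \ref{lemma 3.8} shows (using the tree structure of Proposition \ref{pro 3.4}, the bi-zero property via Lemma \ref{lemma 2.2}, and a \(Q_{\mu,\Lambda}\) estimate) that a spectrum can be translated so as to meet \(\mathcal Z_1\), \(\mathcal Z_2\) and \(\mathcal Z_3\) simultaneously, giving spectrum points at three consecutive levels; nothing in your sketch guarantees this. Second, comparing second coordinates of the pairwise differences yields \eqref{3.7}, which expresses \(\bigl(\rho_2^{-1}/t_1\bigr)^{n-\ell}-1\) as a rational numerator over a fixed, \(\ell\)-independent (possibly irrational) denominator; hence the ratio of these quantities for \((\ell_1,n)=(1,3)\) and \((\ell_2,n)=(2,3)\) is rational, i.e. \(\frac{x^{2}-1}{x-1}=x+1\in\mathbb Q\) with \(x=\rho_2^{-1}/t_1\). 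Only after this does substituting back into \eqref{3.7} force the denominator, and hence \(c''\), to be rational, producing the contradiction. Without Lemma \ref{lemma 3.8} and this three-level ratio trick, the argument does not close.
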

This result also indicates that if \(\rho_1 \neq \rho_2\), a necessary condition for \(\mu_{M, D}\) to be a spectral measure is \(c'' \in \mathbb{Q}\). Consequently, when examining the necessary conditions, we can restrict our attention to the case where \(c'' \in \mathbb{Q}\). Divide \( \mathbb{Q} \) into \( \mathbb{Q} \setminus E_{\bm{a}} \) and \( E_{\bm{a}} \), yielding the following two results.

\begin{thm}\label{thm1.3}
	Suppose that \( \rho_1 \neq \rho_2 \). Let \(\mu_{M,D}\) be given by \eqref{1.1}, \eqref{1.2}, \eqref{x1.3} and \eqref{eq1.3}. Let \( E_{\bm{a}} \) be given by \eqref{1.3} and \( c'' \in \mathbb{Q} \setminus E_{\bm{a}} \). Then \( \mu_{M,D} \) is a spectral measure if and only if \( \rho_1^{-1} \in p\mathbb{Z} \).
\end{thm}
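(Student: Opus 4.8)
The plan is to pass to the diagonal model and then read everything off an explicit description of $\mathcal Z(\widehat{\mu})$. Since $\mu_{M,D}$ and $\mu_{M',D'}$ have the same spectrality, with $M'=\diag(\rho_1^{-1},\rho_2^{-1})$ and $D'=RD$, I would work throughout with $\mu_{M',D'}$. The Fourier transform factorizes as $\widehat{\mu}_{M',D'}(\xi)=\prod_{k\ge 1}\widehat{\delta}_{D'}\big((M')^{-k}\xi\big)$, so that $\mathcal Z(\widehat{\mu}_{M',D'})=\bigcup_{k\ge 1}(M')^{k}\,\mathcal Z(\widehat{\delta}_{D'})$; and since $\widehat{\delta}_{D'}(\xi)=\widehat{\delta}_{D}(R^{*}\xi)$, hypothesis \eqref{x1.3} gives $\mathcal Z(\widehat{\delta}_{D'})=(R^{*})^{-1}\bigcup_{j=1}^{p-1}\big(\tfrac{j\bm a}{p}+\mathbb Z^{2}\big)$ with $(R^{*})^{-1}=\left[\begin{smallmatrix}d_{1,1}&0\\-c''&1\end{smallmatrix}\right]$. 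Writing $c''=h/l$ in lowest terms, the decisive first step is to insert the defining relation of $c''\in\mathbb Q\setminus E_{\bm a}$, namely $la_{2}-ha_{1}\in p\mathbb Z$, into the second coordinate $-c''x+y$ of a generic zero $(x,y)=(\tfrac{ja_{1}}{p}+m,\tfrac{ja_{2}}{p}+n)$. A short computation collapses that coordinate into $\tfrac1l\mathbb Z$, so the shear carries the diagonal family of zero-cosets onto a set that is rectangular with respect to the coordinate axes. This is the structural mechanism by which the two directions decouple, and by which only $\rho_1$ can matter.

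\textbf{Sufficiency.} Assume $\rho_1^{-1}\in p\mathbb Z$, say $\rho_1^{-1}=pN$. In the first coordinate the zeros of $\widehat{\delta}_{D'}$ sit at $d_{1,1}\big(\tfrac{a_{1}j}{p}+\mathbb Z\big)$, with $j$ running over a complete set of nonzero residues (as $\gcd(a_{1},p)=1$); this is exactly the zero pattern of a one–dimensional $p$–point mask, and $p\mid\rho_1^{-1}$ is precisely the arithmetic condition under which the associated $p$–adic digit data close up to generate a tiling. I would therefore build the spectrum as a tree of finite $(M')^{*}$–orbit sums, choosing the first–coordinate increments from a Hadamard complement of the first–coordinate digits and absorbing the second coordinate via the $\tfrac1l\mathbb Z$ structure produced above, and then verify completeness through the Jorgensen–Pedersen criterion $Q_{\Lambda}(\xi):=\sum_{\lambda\in\Lambda}|\widehat{\mu}_{M',D'}(\xi-\lambda)|^{2}\equiv 1$. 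Because the decoupled second coordinate contributes the full lattice $\tfrac{\rho_2^{-k}}{l}\mathbb Z$ at every scale, this construction should go through for arbitrary $\rho_2$, which is why no condition on $\rho_2$ appears.

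\textbf{Necessity.} For the converse I would argue by contraposition. If $\mu_{M',D'}$ is spectral it possesses an infinite orthogonal set $\Lambda$, and orthogonality forces $\Lambda-\Lambda\subset\mathcal Z(\widehat{\mu}_{M',D'})\cup\{\bm{0}\}$. Projecting to the decoupled first coordinate confines the first coordinates of $\Lambda-\Lambda$ to $\bigcup_{k\ge 1}\rho_1^{-k}\,d_{1,1}\big(\tfrac{a_{1}j}{p}+\mathbb Z\big)$; one then shows, by an argument parallel to that of Theorem \ref{thm1.7}, that an infinite such set exists only when $\rho_1^{-1}$ is of rational–power type, and that the completeness requirement $Q_{\Lambda}\equiv 1$ cannot be met unless the first–coordinate mask tiles, i.e.\ unless $\rho_1^{-1}$ is an integer divisible by $p$. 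Since the second coordinate is freely absorbed and cannot create an obstruction, spectrality forces $\rho_1^{-1}\in p\mathbb Z$.

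\textbf{Main obstacle.} The heart of the argument is the decoupling together with the claim that the second coordinate never obstructs spectrality for any admissible $\rho_2$. The measure $\mu_{M',D'}$ is genuinely two–dimensional and, crucially, \emph{not} a product: the value $0$ occurs at least twice among the second coordinates of $D'$ (it comes from both $d_0$ and $d_1$), so the second marginal is a non–uniformly weighted self–similar measure and is itself non–spectral. Hence the reduction to the one–dimensional first–coordinate problem must be carried out through the shear–coupled zero set rather than by factoring the measure. I expect the completeness analysis—constructing a spectrum and checking $Q_{\Lambda}\equiv 1$ uniformly in $\rho_2$ in the sufficiency direction, and ruling out every infinite orthogonal set from the first coordinate alone in the necessity direction—to be the main technical difficulty, with the identity $la_{2}-ha_{1}\in p\mathbb Z$ (the precise meaning of $c''\in\mathbb Q\setminus E_{\bm a}$) serving as the indispensable lever that keeps the two coordinates from interfering.
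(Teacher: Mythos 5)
Your structural picture is the right one and matches the paper's: after the shear the condition $la_2-ha_1\in p\mathbb Z$ forces the relevant zero-cosets to contain horizontal vectors, so the problem collapses onto the first coordinate and $\rho_2$ drops out. But both directions of your argument have genuine gaps at exactly the points you defer.

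For sufficiency, your ``Hadamard complement of the first-coordinate digits'' is unjustified as stated. The first coordinates of $D'$ are, up to a common scalar, the set $\mathcal B=\{c_2d_{i,1}+c_1d_{i,2}\}_{i=0}^{p-1}$, and the existence of a one-dimensional Hadamard complement for $(\rho_1^{-1},\mathcal B)$ requires that $\mathcal B$ be a complete residue system modulo $p$. This is not automatic from \eqref{x1.3} (nothing a priori prevents two digits from colliding modulo $p$ in that linear combination); it is the content of the paper's Lemma \ref{lemma 2.9}, whose proof uses the zero-set hypothesis in an essential character-sum argument, and it is precisely where $c''\in\mathbb Q\setminus E_{\bm a}$ enters a second time. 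Moreover ``verify completeness through the Jorgensen--Pedersen criterion'' hides the real work: the paper must first produce a spectrum $\Lambda\subset\mathbb Z$ of the one-dimensional self-similar measure $\mu_{\rho_1^{-1},\mathcal B}$ with $\delta(\Lambda)=\inf_k\inf_{\lambda\in\Lambda_k}|\widehat\mu_{\rho_1^{-1},\mathcal B}(\rho_1^{\ell_k}\lambda)|^2>0$ (via \cite{DH1}), lift it horizontally as $\Lambda'=\{(c_2\lambda,0)^*\}$ so that all spectral differences are genuinely horizontal vectors lying in $\mathcal Z(\widehat{\widetilde\mu}_{\ell_k})$, and then combine equicontinuity of the tails with Lemma \ref{lemma 2.7} to get $Q_{\Lambda'}\equiv1$. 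Your alternative of ``absorbing the second coordinate via the $\tfrac1l\mathbb Z$ structure'' is riskier than placing the whole spectrum on the horizontal axis, because for a fixed first coordinate only the residues $-c''m+n$ compatible with that $m$ are attainable, not all of $\tfrac{\rho_2^{-k}}{l}\mathbb Z$.

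For necessity, the assertion that completeness ``cannot be met unless the first-coordinate mask tiles, i.e.\ unless $\rho_1^{-1}$ is an integer divisible by $p$'' is the hardest step of the whole theorem and cannot be obtained by an argument ``parallel to Theorem \ref{thm1.7}.'' Theorem \ref{thm1.7} only governs the existence of infinite orthogonal sets, and such sets \emph{do} exist whenever $\rho_1^{-1}=(t/s)^{1/r}$ with $p\mid t$ and $s>1$; ruling out spectrality in that regime is strictly harder. The paper needs the selection-map classification of maximal orthogonal sets (Proposition \ref{pro 3.4}), the lower bound $|\lambda_1(\mathbf I)|\gtrsim t^n$ for words of length $>n$, and the logarithmic tail decay $|\widehat{\mu''}_{>n}(\xi)|\le\alpha(\ln|\rho_1^n\xi_1|)^{-\beta}$ from Lemma \ref{lemma 2.8}, assembled in Proposition \ref{pro 3.6} and Lemma \ref{lemma 3.7}, to show that any candidate spectrum fails the summability test of Lemma \ref{lemma 2.7}(ii) when $s>1$. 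Without an argument of this type your necessity direction is an assertion, not a proof.
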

It is noteworthy that although \( \rho_1 \neq \rho_2 \), we can conclude that when \( c'' \in \mathbb{Q} \setminus E_{\bm{a}} \), the spectrality of \( \mu_{M, D} \) is independent of \( \rho_2 \). This is an intriguing phenomenon.
\begin{thm}\label{thm1.4}
	Suppose that \( \rho_1 \neq \rho_2 \). Let \(\mu_{M,D}\) be given by \eqref{1.1}, \eqref{1.2}, \eqref{x1.3} and \eqref{eq1.3}. Let \( E_{\bm{a}} \) be given by \eqref{1.3} and \( c'' = \frac{c_1}{c_2} \in E_{\bm{a}} \), where \( \gcd(c_1, c_2) = 1 \), \( c_2 \in p^{\ell} (\mathbb{Z} \setminus p\mathbb{Z}) \)  for some \( \ell \in \mathbb{N} \). Then \( \mu_{M,D} \) is a spectral measure if and only if \( \rho_1^{-1}, \rho_2^{-1} \in p\mathbb{Z} \) and \( p^{\ell+1} \mid (\rho_1^{-1} - \rho_2^{-1}) \).
\end{thm}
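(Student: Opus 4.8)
\medskip
\noindent\textbf{Proof proposal for Theorem \ref{thm1.4}.}

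The plan is to base everything on the contragredient iterates of $M$ and the resulting explicit description of $\mathcal{Z}(\widehat{\mu}_{M,D})$. Since $M^{*}=\begin{bmatrix}\rho_1^{-1}&0\\ c&\rho_2^{-1}\end{bmatrix}$, an easy induction gives $(M^{*})^{k}=\begin{bmatrix}\rho_1^{-k}&0\\ c_k&\rho_2^{-k}\end{bmatrix}$ with
\[
c_k=c\,\frac{\rho_1^{-k}-\rho_2^{-k}}{\rho_1^{-1}-\rho_2^{-1}}=c''\bigl(\rho_1^{-k}-\rho_2^{-k}\bigr),
\]
so that $\mathcal{Z}(\widehat{\mu}_{M,D})=\bigcup_{k\ge1}(M^{*})^{k}\mathcal{Z}(\widehat{\delta}_{D})$ is completely explicit; in particular the second coordinate of $(M^{*})^{k}\bigl(\tfrac{j}{p}\bm a\bigr)$ equals $\tfrac{j}{p}\bigl(c''\rho_1^{-k}a_1+\rho_2^{-k}(a_2-c''a_1)\bigr)$. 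Writing $c''=c_1/c_2$ with $\gcd(c_1,c_2)=1$, the hypothesis $c''\in E_{\bm a}$ says precisely that $c_2a_2-c_1a_1\not\equiv0\pmod p$, i.e.\ the coupling $a_2-c''a_1$ is $p$-adically nondegenerate; this is exactly the feature absent in Theorem \ref{thm1.3}, and it is what forces $\rho_2$ into the analysis. I shall also use the combinatorial core of \eqref{x1.3}: the form $d\mapsto\langle d,\bm a\rangle$ sends $D$ bijectively onto $\mathbb{Z}/p\mathbb{Z}$.

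For sufficiency, assume $\rho_i^{-1}=pn_i\in p\mathbb{Z}$ and $p^{\ell+1}\mid(\rho_1^{-1}-\rho_2^{-1})$, i.e.\ $n_i\in\mathbb{Z}$ and $p^{\ell}\mid(n_1-n_2)$. First I would produce one difference vector $\mathbf{w}\in\mathbb{Z}^2$ with $(M^{*})^{-1}\mathbf{w}\in\mathcal{Z}(\widehat{\delta}_{D})$. Matching first coordinates forces $w_1=n_1A$ where $A\equiv ja_1\pmod p$ for some $j\in\{1,\dots,p-1\}$, hence $\gcd(A,p)=1$; solving the second coordinate then gives
\[
w_2=n_2B+\frac{c_1(n_1-n_2)A}{c_2},\qquad B\equiv ja_2\pmod p,
\]
so $\mathbf{w}\in\mathbb{Z}^2$ holds exactly when $c_2\mid(n_1-n_2)A$. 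Because $\gcd(A,p)=1$, the $p$-part of this condition is precisely $p^{\ell}\mid(n_1-n_2)$ -- our hypothesis -- while the prime-to-$p$ part is arranged by fixing $A$ in an appropriate class modulo $c_2/p^{\ell}$ by the Chinese Remainder Theorem, keeping $A\not\equiv0\pmod p$. Then $L=\{k\mathbf{w}:0\le k\le p-1\}$ satisfies $(M^{*})^{-1}(L-L)\subseteq\mathcal{Z}(\widehat{\delta}_{D})\cup\{\mathbf 0\}$ and $\#L=p$, so $(M,D,L)$ is a Hadamard triple; after the routine integer conjugation $\diag(t,1)$ clearing the denominators of $M$ and $D$, the standard Hadamard-triple criterion (cf.\ \cite{DHL14,LDL}) yields that $\mu_{M,D}$ is a spectral measure.

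For necessity, suppose $\mu_{M,D}$ is spectral; then $L^{2}(\mu_{M,D})$ contains an infinite orthogonal set, and every frequency difference lies in $\mathcal{Z}(\widehat{\mu}_{M,D})$. Since the first coordinate of every such zero involves only $\rho_1^{-k}$ while the $\rho_2$-direction of $M$ is autonomous, an infinite orthogonal set yields at each axis the infinite orthogonality needed to invoke the one-dimensional analysis behind Theorems \ref{thm1.7}--\ref{thm1.8}; together with the nondegeneracy $c_2a_2-c_1a_1\not\equiv0\pmod p$ this forces $\rho_1^{-1},\rho_2^{-1}\in p\mathbb{Z}$. It remains to obtain $p^{\ell}\mid(n_1-n_2)$. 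For these $p$-element digit sets the tiling characterization of spectra reduces completeness to the existence of a Hadamard triple, whence the solvability computation above, now read as a \emph{necessary} condition, delivers $p^{\ell}\mid(n_1-n_2)$; alternatively, if $p^{\ell}\nmid(n_1-n_2)$, then for every $k$ the second coordinate of $(M^{*})^{k}\bigl(\tfrac{j}{p}\bm a\bigr)+\mathbb{Z}^2$ occupies, after clearing $c_2$, a single nonzero class modulo $p^{\ell+1}$, so the admissible differences cannot exhaust $\mathbb{Z}/p^{\ell+1}\mathbb{Z}$ in the dual direction and no orthonormal basis can exist.

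The hard part will be this final divisibility: propagating the $p$-adic valuation of the coupling $c''$, whose denominator has valuation $\ell$, through all the scales $\rho_1^{-k},\rho_2^{-k}$ at once, and showing that the single threshold $p^{\ell+1}\mid(\rho_1^{-1}-\rho_2^{-1})$ controls orthogonality at every level $k$ simultaneously. Making the valuation bookkeeping uniform in $k$, while correctly exploiting the nondegeneracy $c_2a_2-c_1a_1\not\equiv0\pmod p$ (which fails in the regime of Theorem \ref{thm1.3}), is where the argument demands the most care.
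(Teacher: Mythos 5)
Your sufficiency argument takes a genuinely different, and considerably shorter, route than the paper's, and as far as I can check it is sound. Solving $(M^{*})^{-1}\mathbf{w}\in\mathcal{Z}(\widehat{\delta}_{D})$ over $\mathbb{Z}^{2}$ does give $w_1=n_1A$, $w_2=n_2B+c_1(n_1-n_2)A/c_2$ with $A\equiv ja_1$, $B\equiv ja_2\pmod p$; the $p$-part of the integrality of $w_2$ is exactly $p^{\ell+1}\mid(\rho_1^{-1}-\rho_2^{-1})$, the prime-to-$p$ part is absorbed by choosing $A\equiv 0\pmod{c_2/p^{\ell}}$ via CRT, and after conjugating by $\diag(c_2/p^{\ell},1)$ the matrix, the digits and $L=\{k\mathbf{w}\}_{k=0}^{p-1}$ are all integral, so Lemma \ref{lem2.3} applies directly. (I verified this on concrete instances, e.g.\ $p=3$, $D=\{(0,0)^*,(1,0)^*,(0,1)^*\}$, $c''=\tfrac16$, $\rho_1^{-1}=3$, $\rho_2^{-1}=12$.) The paper does \emph{not} do this: it proves sufficiency through the integral periodic zero set $Z(\mu_{\widetilde M,\widetilde D})$ and an iterative spectrum construction (Lemmas \ref{lemma 5.3}--\ref{lemma 5.6}), which is much heavier machinery. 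If your Hadamard-triple construction is written out in full it would replace that entire block; the one thing to make explicit is that the conjugation $\diag(c_2/p^{\ell},1)$, not $\diag(t,1)$, is what clears the denominators, and that $c_2/p^{\ell}\cdot c=c_1(\rho_1^{-1}-\rho_2^{-1})/p^{\ell}$ is an integer precisely under the hypothesis.

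The necessity direction, however, has genuine gaps. First, "an infinite orthogonal set yields at each axis the infinite orthogonality needed" only gives, via Lemma \ref{lemma3.1}, that $\rho_i$ is an $r$-th root of a fraction $\frac{s}{t}$ with $p\mid t$; it does not give $\rho_i^{-1}\in p\mathbb{Z}$. The upgrade from "root of a fraction" to "integer divisible by $p$" is exactly where the spectral hypothesis (not mere orthogonality) must be used, through the selection-map structure of maximal bi-zero sets (Proposition \ref{pro 3.4}) combined with the logarithmic decay estimate of Lemma \ref{lemma 2.8} and the summability criterion of Lemma \ref{lemma 2.7}(ii) — this is Proposition \ref{pro 3.6} and Lemma \ref{lemma 3.7} for the first coordinate, and Proposition \ref{pro 5.1} (which reruns the same machinery in the second coordinate, using $c''\in E_{\bm a}$ to ensure the second coordinates of the zeros lie in $\frac{1}{c_2}\rho_2^{-i}(\frac{j}{p}+\mathbb{Z})$) for $\rho_2$. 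None of this is present in your sketch.

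Second, your derivation of $p^{\ell+1}\mid(\rho_1^{-1}-\rho_2^{-1})$ rests on "the tiling characterization of spectra reduces completeness to the existence of a Hadamard triple", read as a necessary condition. No such characterization is available: spectrality of $\mu_{M,D}$ is not known to force a one-step integer Hadamard triple $(M,D,L)$, and spectrum elements are in general not integer vectors, so the integrality computation cannot be "read backwards". Your fallback (a single residue class modulo $p^{\ell+1}$ "cannot exhaust" the dual direction) is not a recognized non-spectrality criterion and is too vague to evaluate. What is actually needed is the paper's Lemma \ref{lemma 3.8}: any spectrum $\Lambda\ni\bm 0$ must meet both $\widetilde{\mathcal Z}_1$ and $\widetilde{\mathcal Z}_2$ (this step uses Lemma \ref{lemma 2.2} together with a $Q_{\mu,\Lambda}<1$ estimate, and is where Lemma \ref{lemma 2.9}, i.e.\ $p\nmid d_{1,1}$, enters). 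Once one has $\lambda_1\in\Lambda\cap\widetilde{\mathcal Z}_1$ and $\lambda_2\in\Lambda\cap\widetilde{\mathcal Z}_2$, the first coordinate forces $\lambda_2-\lambda_1\in\widetilde{\mathcal Z}_1$, and matching the second coordinate against the exact form of $\widetilde{\mathcal Z}_1$ yields $(\rho_1^{-1}-\rho_2^{-1})c''(i_2a_1+pk_{2,1})\in p\mathbb{Z}$, whence $p^{\ell+1}\mid(\rho_1^{-1}-\rho_2^{-1})$ (Proposition \ref{pro 5.2}). That existence statement for $\lambda_1,\lambda_2$ is the missing idea in your necessity argument.
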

By synthesizing the results of Theorems \ref{thm1.5} to \ref{thm1.4}, we obtain the following theorem, which establishes the necessary and sufficient conditions for \( \mu_{M, D} \) to be a spectral measure.
\begin{thm}\label{thm1.6}
	Let \( \mu_{M,D} \) be given by \eqref{1.1}, \eqref{1.2}, \eqref{x1.3} and \eqref{eq1.3}, and let \( E_{\bm{a}} \) be given by \eqref{1.3}. Then \( \mu_{M,D} \) is a spectral measure if and only if \( M \) and \( D \) satisfy one of the following conditions:
	\begin{enumerate}
		\item[$(i)$] \( \rho_1 \neq \rho_2 \), \( \rho_1^{-1} \in p\mathbb{Z} \), and \( c'' \in \mathbb{Q}\setminus E_{\bm{a}} \).
		\item[$(ii)$] \( \rho_1\neq \rho_2 \), \( \rho_1^{-1}, \rho_2^{-1} \in p\mathbb{Z} \), and \( c'' = \frac{s}{t} \in E_{\bm{a}} \), \( t \in p^{\ell}(\mathbb{Z} \setminus p\mathbb{Z}) \), \( p^{\ell+1} \mid (\rho_1^{-1} - \rho_2^{-1}) \) for some \( \ell \in \mathbb{N} \).
		\item[$(iii)$] \( \rho_1 = \rho_2 \in p\mathbb{Z} \) and \( c \in \{ \frac{t}{s} : t \in p\mathbb{Z}, \gcd(s, t) = 1 \} \cup \{0\} \).
	\end{enumerate}
\end{thm}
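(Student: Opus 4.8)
The plan is to prove Theorem~\ref{thm1.6} purely as a synthesis of Theorems~\ref{thm1.5} through~\ref{thm1.4}, organized by whether the two contraction ratios coincide. First I would dispose of the equal-ratio case: when $\rho_1 = \rho_2 = \rho$, Theorem~\ref{thm1.5} already supplies the exact criterion, namely that $\mu_{M,D}$ is spectral if and only if $\rho^{-1} \in p\mathbb{Z}$ together with $c \in \{\tfrac{t}{s} : t \in p\mathbb{Z},\, \gcd(t,s)=1\}\cup\{0\}$. This is precisely condition~$(iii)$, so nothing further is required in this regime beyond recording it.

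The remaining effort is the case $\rho_1 \neq \rho_2$. The key reduction is Theorem~\ref{thm1.2}: if $c'' \notin \mathbb{Q}$ then $\mu_{M,D}$ is non-spectral. Taking the contrapositive, spectrality forces $c'' \in \mathbb{Q}$, so when assembling the list of spectral cases I may assume $c'' \in \mathbb{Q}$ throughout. I would then partition $\mathbb{Q}$ according to membership in the set $E_{\bm{a}}$ defined in~\eqref{1.3}. For $c'' \in \mathbb{Q}\setminus E_{\bm{a}}$, Theorem~\ref{thm1.3} states that $\mu_{M,D}$ is spectral exactly when $\rho_1^{-1}\in p\mathbb{Z}$, which is condition~$(i)$. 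For $c'' = \tfrac{c_1}{c_2}\in E_{\bm{a}}$ with $\gcd(c_1,c_2)=1$ and $c_2\in p^{\ell}(\mathbb{Z}\setminus p\mathbb{Z})$, Theorem~\ref{thm1.4} gives spectrality precisely when $\rho_1^{-1},\rho_2^{-1}\in p\mathbb{Z}$ and $p^{\ell+1}\mid(\rho_1^{-1}-\rho_2^{-1})$, which is condition~$(ii)$ (with the denominator there written as $t$ rather than $c_2$).

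To finish, I would verify that the three families are exhaustive and non-overlapping. Exhaustiveness follows because the trichotomy $\{\rho_1=\rho_2\}$, $\{\rho_1\neq\rho_2,\ c''\in\mathbb{Q}\setminus E_{\bm{a}}\}$, $\{\rho_1\neq\rho_2,\ c''\in E_{\bm{a}}\}$ covers every admissible pair $(M,D)$ once the non-spectral branch $c''\notin\mathbb{Q}$ is discarded by Theorem~\ref{thm1.2}; disjointness is immediate since the three are separated by the dichotomy $\rho_1=\rho_2$ versus $\rho_1\neq\rho_2$, and within the latter by $c''\in E_{\bm{a}}$ versus $c''\notin E_{\bm{a}}$. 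The necessity and sufficiency in each branch are inherited verbatim from the cited ``if and only if'' statements.

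Since the argument is essentially bookkeeping, the only genuine subtlety — the point I would be most careful about — is the role of Theorem~\ref{thm1.2} in the $\rho_1\neq\rho_2$ regime. Without it the partition of $\mathbb{Q}$ would be incomplete, because a priori $c''$ need not be rational; Theorem~\ref{thm1.2} is exactly what guarantees that no spectral measure is lost by restricting to $c''\in\mathbb{Q}$ before splitting along $E_{\bm{a}}$. I would state this reduction explicitly so that the equivalence in Theorem~\ref{thm1.6} is logically airtight rather than a mere concatenation of the earlier results.
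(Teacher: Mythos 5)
Your proposal is correct and follows exactly the route the paper intends: the paper gives no separate proof of Theorem~\ref{thm1.6}, stating only that it is obtained ``by synthesizing the results of Theorems~\ref{thm1.5} to~\ref{thm1.4},'' which is precisely your case split ($\rho_1=\rho_2$ via Theorem~\ref{thm1.5}; $\rho_1\neq\rho_2$ reduced to $c''\in\mathbb{Q}$ by Theorem~\ref{thm1.2} and then partitioned into $\mathbb{Q}\setminus E_{\bm{a}}$ and $E_{\bm{a}}$ via Theorems~\ref{thm1.3} and~\ref{thm1.4}). Your explicit attention to the exhaustiveness of the trichotomy and the role of Theorem~\ref{thm1.2} is, if anything, more careful than the paper's one-line justification.
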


The paper is organized as follows. In Section 2, we introduce key definitions and results that are essential for proving our main theorems and provide a foundation for the study of spectral measures. In Section 3, we establish the necessary conditions for \( \mu_{M, D} \) to be a spectral measure and prove Theorem \ref{thm1.2}. In Section 4, we analyze the case \( \rho_1 = \rho_2 \), considering both the spectrality and non-spectrality of \( \mu_{M, D} \), i.e., we give the proofs of Theorems \ref{thm1.7}, \ref{thm1.8} and \ref{thm1.5}. In Section 5, we examine the case \( \rho_1 \neq \rho_2 \), with a focus on proving Theorems \ref{thm1.3} and \ref{thm1.4}. Finally, in Section 6, we offer some examples.

\section{Preliminaries}

In this section, we present some preliminary lemmas and notations that will be used throughout the paper. Let \(\mu\) be a probability measure with compact support in \(\mathbb{R}^2\). The Fourier transform of \(\mu\) is defined in the usual manner:
\[
\widehat{\mu}(\xi) = \int_{\mathbb{R}^2} e^{-2\pi i \langle \xi, x \rangle} \, d\mu(x), \quad \xi \in \mathbb{R}^2.
\]
It is straightforward to show that \(\Lambda\) is an orthogonal set with respect to \(\mu\) if and only if
\[
\widehat{\mu}(\lambda - \lambda') = 0 \quad \text{for any} \quad \lambda \neq \lambda' \in \Lambda.
\]
In other words, the family \(E_{\Lambda} = \{ e^{-2\pi i \langle \lambda, x \rangle} : \lambda \in \Lambda \}\) is an orthogonal family in \(L^2(\mu)\) if and only if
$
(\Lambda - \Lambda) \setminus \{ \bm{0}\} \subset \mathcal{Z}(\widehat{\mu}),
$
where \(\mathcal{Z}(\widehat{\mu})\) denotes the zero set of the Fourier transform \(\widehat{\mu}\). We also say that \( \Lambda \) is a bi-zero set of \( \mu \).
Based on \eqref{1.1}, we can express \( \mu_{M,D} \) as an infinite convolution of the following form:
\begin{eqnarray}\label{2.1}
\mu_{M,D}&=& \delta_{M^{-1}D}*\delta_{M^{-2}D}*\delta_{M^{-3}D}*\cdots,
\end{eqnarray}
where $\delta_{E}=\frac{1}{\#E}\sum\limits_{e\in E}\delta_e$, $\delta_e$ is the Dirac measure at the point $e\in E$ and the convergence is in weak sense.

From \eqref{1.2} and \eqref{2.1}, it follows that
\begin{equation}\label{2.2}
	\mathcal{Z}(\widehat{\mu}_{M,D})=\bigcup_{k=1}^{\infty}(M^{*})^{k}\mathcal{Z}(\widehat{\delta}_{D})
=\bigcup_{k=1}^{\infty}(M^{*})^{k}\bigcup_{j=1}^{p-1}\left(\frac{j\bm{a}}{p}+\mathbb{Z}^{2}\right).
\end{equation}
When $\rho_1 \neq \rho_2$, recall that
\[
R = \begin{bmatrix} {d_{1,1}}^{-1} & c' \\ 0 & 1 \end{bmatrix} \quad \text{and} \quad D' = RD,
\]
where $c' = \frac{c}{d_{1,1}(\rho_1^{-1} - \rho_2^{-1})}$, and $D$ satisfies \eqref{eq1.3}. Then,
\begin{equation}\label{2.3}
	M' = RMR^{-1} = \begin{bmatrix} \rho_1^{-1} & 0 \\ 0 & \rho_2^{-1} \end{bmatrix} \quad \text{and} \quad
	\mathcal{Z}(\widehat{\delta}_{D'}) = (R^*)^{-1} \bigcup_{j=1}^{p-1} \left( \frac{j\bm{a} }{p}+\mathbb{Z}^2 \right).
\end{equation}
Since similarity transformations preserve the spectral properties of a self-affine measure \cite{DH1}, the spectral characteristics of \(\mu_{M,D}\) and \(\mu_{M',D'}\) are identical. By combining \eqref{2.2} and \eqref{2.3}, it is straightforward to verify that
\begin{equation}\label{2.4}
	\mathcal{Z}(\widehat{\mu}_{M',D'})= \bigcup_{k=1}^{\infty}\bigcup_{j=1}^{p-1}\left\{
	{\left({\begin{array}{*{20}{c}}
				\rho_{1}^{-k}d_{1,1}\left(\frac{ja_{1}}{p}+k_{1}\right)\\
				\rho_{2}^{-k}\left(\frac{ja_{2}-c'd_{1,1}ja_{1}}{p}+k_{2}-c'd_{1,1}k_{1}\right)\\
		\end{array}}\right)}:k_{1},k_{2}\in\mathbb{Z}
	\right\},
\end{equation}
where $\bm{a}=(a_{1},a_{2})^*\in \mathcal{E}_{p}.$ Define the function
\begin{equation*}
	Q_{\mu,\Lambda}(\xi)=\displaystyle{\sum_{\lambda\in \Lambda}}\left|\widehat{\mu}(\xi+\lambda)\right|^2.
\end{equation*}
The following theorem establishes a fundamental criterion for determining the spectrality of the measure \(\mu\) \cite{JP1}.
\begin{lemma}[{\cite{JP1}}]\label{lemma 2.1}
	Let $\mu$ be a Borel probability measure with compact support in $\mathbb{R}^{n}$, and let $ \Lambda \subset \mathbb{R}^{n} $ be a countable subset. Then
	\begin{itemize}
		\item[$(i)$] $E_{\Lambda}$ is an orthogonal family of $L^2(\mu)$ if and only if $Q_{\mu,\Lambda}(\xi)\leq1$ for $\xi\in \mathbb{R}^{n}$.
		\item[$(ii)$] $E_{\Lambda}$ is an orthogonal basis for $L^2(\mu)$ if and only if $Q_{\mu,\Lambda}(\xi)=1$ for $\xi\in \mathbb{R}^{n}$.
		\item[$(iii)$] $Q_{\mu,\Lambda}(\xi)$ has an entire analytic extension to $\mathbb{C}^{n}$ if $\Lambda$ is an orthogonal set of $\mu$.
	\end{itemize}
\end{lemma}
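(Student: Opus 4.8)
The plan is to recast $Q_{\mu,\Lambda}$ as a sum of squared Fourier coefficients of a unit vector against the system $E_\Lambda$, after which $(i)$, $(ii)$ and $(iii)$ follow from Bessel's inequality, Parseval's identity, and a locally uniform convergence argument for entire functions, respectively. Write $e_\xi(x)=e^{-2\pi i\langle\xi,x\rangle}$. Since $\mu$ is a probability measure, $\|e_\xi\|_{L^2(\mu)}^2=\widehat\mu(\mathbf 0)=1$, and $\langle e_\alpha,e_\beta\rangle_{L^2(\mu)}=\widehat\mu(\alpha-\beta)$; in particular $E_\Lambda$ is orthogonal precisely when $\widehat\mu(\lambda-\lambda')=0$ for distinct $\lambda,\lambda'$, in which case it is automatically orthonormal. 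As $\mu$ is real, $\widehat\mu(-\zeta)=\overline{\widehat\mu(\zeta)}$ for $\zeta\in\mathbb{R}^n$, so $|\widehat\mu(\xi+\lambda)|=|\langle e_{-\xi},e_\lambda\rangle|$ and
\[
Q_{\mu,\Lambda}(\xi)=\sum_{\lambda\in\Lambda}|\langle e_{-\xi},e_\lambda\rangle|^2,\qquad \xi\in\mathbb{R}^n,
\]
where each $e_{-\xi}$ is a unit vector in $L^2(\mu)$.

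For $(i)$, if $E_\Lambda$ is orthogonal (hence orthonormal) then Bessel's inequality for the unit vector $e_{-\xi}$ gives $Q_{\mu,\Lambda}(\xi)\le\|e_{-\xi}\|^2=1$; conversely, assuming $Q_{\mu,\Lambda}\le 1$ on $\mathbb{R}^n$ and fixing $\lambda_0\in\Lambda$, evaluation at $\xi=-\lambda_0$ isolates the term $|\widehat\mu(\mathbf 0)|^2=1$ and forces every remaining $|\widehat\mu(\lambda-\lambda_0)|^2$ to vanish, so $E_\Lambda$ is orthogonal. For $(ii)$, if $E_\Lambda$ is an orthonormal basis then Parseval's identity for $e_{-\xi}$ gives $Q_{\mu,\Lambda}(\xi)=\|e_{-\xi}\|^2=1$. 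Conversely, $Q_{\mu,\Lambda}\equiv 1$ forces orthonormality by $(i)$; letting $P$ denote the orthogonal projection onto $\overline{\operatorname{span}}\,E_\Lambda$, the identity $\|Pe_{-\xi}\|^2=Q_{\mu,\Lambda}(\xi)=1=\|e_{-\xi}\|^2$ shows $e_{-\xi}\in\operatorname{ran}P$ for all $\xi$. Since $\operatorname{supp}\mu$ is compact, Stone--Weierstrass makes $\operatorname{span}\{e_{-\xi}:\xi\in\mathbb{R}^n\}$ dense in $L^2(\mu)$, whence $P=I$ and $E_\Lambda$ is complete, i.e. an orthonormal basis.

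Part $(iii)$ is where the real work lies. Since $\operatorname{supp}\mu=:K$ is compact, $\widehat\mu$ extends to an entire function on $\mathbb{C}^n$ via $\widehat\mu(z)=\int_K e^{-2\pi i\langle z,x\rangle}\,d\mu(x)$, so for each $\lambda$ the function $g_\lambda(z):=\widehat\mu(z+\lambda)\,\widehat\mu(-z-\lambda)$ is entire and coincides with $|\widehat\mu(\xi+\lambda)|^2$ when $z=\xi\in\mathbb{R}^n$; thus $\sum_\lambda g_\lambda$ is a candidate analytic extension of $Q_{\mu,\Lambda}$. The obstruction is that the partial sums need not be uniformly bounded in the imaginary directions, so Montel's theorem cannot be applied to $Q_{\mu,\Lambda}$ directly. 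I would resolve this with a weighted Bessel estimate: writing $z=\xi+i\eta$ and $f_\eta(x)=e^{2\pi\langle\eta,x\rangle}$ (bounded on $K$), a direct computation gives $\overline{\widehat\mu(z+\lambda)}=\langle f_\eta e_{-\xi},e_\lambda\rangle$ and $\widehat\mu(-z-\lambda)=\langle f_{-\eta}e_{-\xi},e_\lambda\rangle$, so Bessel's inequality (valid since $E_\Lambda$ is orthonormal) yields $\sum_\lambda|\widehat\mu(z+\lambda)|^2\le\|f_\eta\|_{\infty}^2$ and $\sum_\lambda|\widehat\mu(-z-\lambda)|^2\le\|f_{-\eta}\|_{\infty}^2$.

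Combining these two bounds with the Cauchy--Schwarz inequality applied to $\sum_\lambda|g_\lambda(z)|=\sum_\lambda|\widehat\mu(z+\lambda)|\,|\widehat\mu(-z-\lambda)|$ gives
\[
\sum_{\lambda\in\Lambda}|g_\lambda(z)|\le\|f_\eta\|_{\infty}\,\|f_{-\eta}\|_{\infty},
\]
a quantity that is finite and locally uniform in $z\in\mathbb{C}^n$. By the Weierstrass $M$-test the series $\sum_\lambda g_\lambda$ then converges absolutely and locally uniformly, hence defines an entire function on $\mathbb{C}^n$ that agrees with $Q_{\mu,\Lambda}$ on $\mathbb{R}^n$, proving $(iii)$. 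The main difficulty throughout is precisely this last point: producing a convergence bound that survives the passage to complex arguments, for which the weighted Bessel estimate against the vectors $f_{\pm\eta}e_{-\xi}$ is the crucial device; parts $(i)$ and $(ii)$ are then immediate consequences of elementary Hilbert-space facts together with the density of the exponentials.
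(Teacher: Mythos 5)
The paper offers no proof of this lemma at all: it is quoted directly from Jorgensen--Pedersen \cite{JP1}, so there is no internal argument to compare against. Your proof is correct in substance and follows the standard route for this criterion: the identification $Q_{\mu,\Lambda}(\xi)=\sum_{\lambda\in\Lambda}|\langle e_{-\xi},e_\lambda\rangle|^2$ with $e_{-\xi}$ a unit vector, Bessel plus evaluation at $\xi=-\lambda_0$ (which isolates the term $|\widehat\mu(\mathbf{0})|^2=1$) for $(i)$, Parseval together with the Stone--Weierstrass density of the exponentials in $L^2(\mu)$ for $(ii)$, and the weighted Bessel estimate against $f_{\pm\eta}e_{-\xi}$ for $(iii)$ --- the last being exactly the right device, and the part most often glossed over.

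One small repair is needed in $(iii)$: your bound $\sum_{\lambda\in\Lambda}|g_\lambda(z)|\le\|f_\eta\|_\infty\|f_{-\eta}\|_\infty$ controls the \emph{full} sums locally uniformly in $z$, but it is not a term-wise majorant $|g_\lambda(z)|\le M_\lambda$ with $\sum_\lambda M_\lambda<\infty$ (the individual terms are only bounded by the non-summable constant $\|f_\eta\|_\infty\|f_{-\eta}\|_\infty$), so the Weierstrass $M$-test does not literally apply. The conclusion survives by a normal-families argument instead: the partial sums $S_N$ are entire, locally uniformly bounded by your estimate, and converge pointwise by the absolute convergence you established; hence by the Vitali--Porter theorem (equivalently, Montel plus uniqueness of subsequential limits) they converge locally uniformly, and the limit is entire and agrees with $Q_{\mu,\Lambda}$ on $\mathbb{R}^n$. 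With that one-line substitution the proof is complete and matches the standard argument of \cite{JP1}.
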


As an immediate consequence of Lemma \ref{lemma 2.1}, we obtain the following useful result.

\begin{lemma}[{\cite{DHL14}}] \label{lemma 2.2}
	Let \( \mu = \mu_0 * \mu_1 \) be the convolution of two probability measures \( \mu_i \), \( i = 0, 1 \), and they are not Dirac measures. Suppose that \( \Lambda \) is a bi-zero set of \( \mu_0 \), then \( \Lambda \) is also a bi-zero of \( \mu \), but cannot be a spectrum of \( \mu \).
\end{lemma}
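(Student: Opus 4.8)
The statement has two parts, and the plan is to dispatch the first directly and to attack the second by contradiction, via a sandwiching of the $Q$-functions followed by a measure-theoretic rigidity step. For the first part I would use that the Fourier transform turns convolution into a product, $\widehat{\mu} = \widehat{\mu}_0\,\widehat{\mu}_1$, whence $\mathcal{Z}(\widehat{\mu}_0)\subseteq\mathcal{Z}(\widehat{\mu})$. Since $\Lambda$ being a bi-zero set of $\mu_0$ means exactly $(\Lambda-\Lambda)\setminus\{\bm 0\}\subseteq\mathcal{Z}(\widehat{\mu}_0)$, this inclusion gives $(\Lambda-\Lambda)\setminus\{\bm 0\}\subseteq\mathcal{Z}(\widehat{\mu})$, so $\Lambda$ is a bi-zero set of $\mu$ as well.

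For the second part I would suppose, toward a contradiction, that $\Lambda$ is a spectrum of $\mu$. By Lemma~\ref{lemma 2.1}$(ii)$ this yields $Q_{\mu,\Lambda}(\xi)=1$ for all $\xi$, while Lemma~\ref{lemma 2.1}$(i)$ applied to the orthogonal family $E_\Lambda\subset L^2(\mu_0)$ gives $Q_{\mu_0,\Lambda}(\xi)\le 1$. Writing $Q_{\mu,\Lambda}(\xi)=\sum_{\lambda\in\Lambda}|\widehat{\mu}_0(\xi+\lambda)|^2\,|\widehat{\mu}_1(\xi+\lambda)|^2$ and using $|\widehat{\mu}_1|\le 1$ (as $\mu_1$ is a probability measure), I would sandwich $1=Q_{\mu,\Lambda}(\xi)\le Q_{\mu_0,\Lambda}(\xi)\le 1$. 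Hence the nonnegative quantity $\sum_{\lambda\in\Lambda}|\widehat{\mu}_0(\xi+\lambda)|^2\bigl(1-|\widehat{\mu}_1(\xi+\lambda)|^2\bigr)$ vanishes identically in $\xi$; since each summand is nonnegative, fixing a single $\lambda_0\in\Lambda$ forces $|\widehat{\mu}_0(\eta)|^2\bigl(1-|\widehat{\mu}_1(\eta)|^2\bigr)=0$ for every $\eta$. Because $\widehat{\mu}_0(\bm 0)=1$ and $\widehat{\mu}_0$ is continuous, $\widehat{\mu}_0\ne 0$ on a neighborhood $U$ of $\bm 0$, and therefore $|\widehat{\mu}_1(\eta)|=1$ throughout $U$.

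The final and most delicate step is to convert the modulus condition $|\widehat{\mu}_1|\equiv 1$ near the origin into the conclusion that $\mu_1$ is a Dirac measure, contradicting the hypothesis. I would pass to the autocorrelation $\nu=\mu_1*\widetilde{\mu}_1$, whose Fourier transform is the real-valued function $\widehat{\nu}=|\widehat{\mu}_1|^2$, so that $\widehat{\nu}\equiv 1$ on $U$. For the probability measure $\nu$ this means $\int\bigl(1-\cos 2\pi\langle\eta,z\rangle\bigr)\,d\nu(z)=0$ for each $\eta\in U$, forcing $\langle\eta,z\rangle\in\mathbb{Z}$ for $\nu$-a.e.\ $z$. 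Running this over a countable dense subset of $U$ and invoking continuity pins the support of $\nu$ to $\{\bm 0\}$, i.e.\ $\nu=\delta_{\bm 0}$; then $\operatorname{supp}(\mu_1)-\operatorname{supp}(\mu_1)=\{\bm 0\}$ gives $\mu_1=\delta_a$ for some $a$, which is the desired contradiction.

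I expect this rigidity step to be the main obstacle: the null sets on which $\langle\eta,z\rangle\notin\mathbb{Z}$ depend on $\eta$, so the passage from ``$|\widehat{\mu}_1|=1$ on an open set'' to ``$\mu_1$ degenerate'' cannot be done by naive pointwise reasoning and must be routed through the autocorrelation together with a density (or Fubini) argument. By contrast, the sandwiching of $Q_{\mu,\Lambda}$ between $1$ and $Q_{\mu_0,\Lambda}$ and the termwise nonnegativity are routine once the product structure $\widehat{\mu}=\widehat{\mu}_0\widehat{\mu}_1$ and $|\widehat{\mu}_1|\le 1$ are in hand. Note that only the non-degeneracy of $\mu_1$ is actually used in the contradiction, so the hypothesis that $\mu_0$ is not a Dirac measure is not needed for this part.
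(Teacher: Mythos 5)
The paper does not prove this lemma itself but imports it from \cite{DHL14}, and your argument is correct and follows essentially the same route as that source: the sandwich $1=Q_{\mu,\Lambda}(\xi)\le Q_{\mu_0,\Lambda}(\xi)\le 1$ (using $\widehat{\mu}=\widehat{\mu}_0\widehat{\mu}_1$ and $|\widehat{\mu}_1|\le 1$) forces termwise equality and hence $|\widehat{\mu}_1|\equiv 1$ on a neighborhood of the origin, after which the standard rigidity fact that a probability measure whose Fourier transform has unit modulus on an open set must be a Dirac mass yields the contradiction. Your handling of the delicate points is also sound: routing the rigidity step through the autocorrelation $\nu=\mu_1*\widetilde{\mu}_1$ with a countable dense set of frequencies correctly disposes of the $\eta$-dependent null sets, and your remark that only the non-degeneracy of $\mu_1$ is actually used is accurate.
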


 The following lemma is well-known.
\begin{lemma}{\rm{(\cite[Lemma 2.2]{AFL1}\label{lem2.8})}}
	Let \( I \) be a compact set in \( \mathbb{R}^{2} \). Then, the set \( F(I) \), which denotes the Fourier transforms of all Borel probability measures supported on the compact set \( I \subset \mathbb{R}^{2} \), is equi-continuous.
\end{lemma}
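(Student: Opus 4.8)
The plan is to establish the stronger property of \emph{uniform} equicontinuity directly from the integral definition of the Fourier transform, exploiting the fact that the total mass of each measure is normalized to one and that the support $I$ is bounded. The essential observation is that for any Borel probability measure $\mu$ supported on $I$ and any $\xi, \eta \in \mathbb{R}^2$, the difference $\widehat{\mu}(\xi) - \widehat{\mu}(\eta)$ can be written as the single integral $\int_I \big( e^{-2\pi i \langle \xi, x\rangle} - e^{-2\pi i \langle \eta, x\rangle}\big)\, d\mu(x)$, after which the triangle inequality for integrals reduces the problem to controlling the integrand pointwise, uniformly in $x \in I$.

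First I would factor out a common phase and use the elementary inequality $|e^{i\theta} - 1| \le |\theta|$ (equivalently $|e^{i\theta}-1| = 2|\sin(\theta/2)|$) to bound $|e^{-2\pi i\langle\xi,x\rangle} - e^{-2\pi i\langle\eta,x\rangle}| = |e^{-2\pi i\langle\xi-\eta,x\rangle} - 1| \le 2\pi\,|\langle \xi - \eta, x\rangle|$. By the Cauchy--Schwarz inequality in $\mathbb{R}^2$ this is at most $2\pi\,|\xi - \eta|\,|x|$. Since $I$ is compact, $R := \sup_{x\in I} |x| < \infty$, so the integrand is bounded by $2\pi R\,|\xi - \eta|$ uniformly for $x \in I$, with $R$ depending only on $I$.

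Integrating this bound over $I$ against $\mu$ and using the normalization $\mu(I) = 1$ then yields the key estimate $|\widehat{\mu}(\xi) - \widehat{\mu}(\eta)| \le 2\pi R\,|\xi - \eta|$, whose right-hand side is independent of the particular measure $\mu$. Thus every $\widehat{\mu} \in F(I)$ is Lipschitz with one and the same constant $2\pi R$, which immediately gives uniform equicontinuity: given $\varepsilon > 0$, the choice $\delta = \varepsilon/(2\pi R)$ works simultaneously for all members of $F(I)$.

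The argument presents no genuine obstacle; the only point requiring a modicum of care is verifying that each constant in the final bound is truly uniform over the family $F(I)$. This uniformity rests on exactly two facts: the common normalization $\mu(I) = 1$ enjoyed by all probability measures, and the \emph{single} radius $R$ furnished by compactness of $I$, which does not depend on $\mu$. Once these are in place, the Lipschitz estimate—and hence the asserted equicontinuity—follows at once.
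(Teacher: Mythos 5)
Your proof is correct; note that the paper itself gives no proof of this lemma, simply citing \cite[Lemma 2.2]{AFL1}. The uniform Lipschitz estimate $|\widehat{\mu}(\xi)-\widehat{\mu}(\eta)|\le 2\pi R\,|\xi-\eta|$ with $R=\sup_{x\in I}|x|$, obtained from $|e^{i\theta}-1|\le|\theta|$ and the normalization $\mu(I)=1$, is exactly the standard argument used in the cited source, so your approach matches the intended one.
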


Next, we give the definition of compatible pair, which is essential for the construction of the spectrum.

\begin{defn}[{\cite{JP1}}]\label{de2.3}
	\rm Let $M\in M_{n}(\mathbb{Z})$  be an $n\times n$ expansive matrix with integer entries. Let $B,L \subset \mathbb{Z}^{n}$ be a finite set of integer vectors with $\#B = \#L=N$ and $\bm{0} \in B\cap L$. We say that $(M^{-1}B, L)$ forms a compatible pair (or $(M, B)$ is admissible, or $(M,B,L)$ forms a Hadamard triple)
	if the matrix
	\begin{align*}
		H=\frac{1}{\sqrt {N}}\left[e^{2\pi i  \langle M^{-1}b, \ell  \rangle}\right]_{b\in B, \ell\in L}
	\end{align*}
	is unitary, i.e., $HH^*=I$, where $H^*$ denotes the transposed conjugate of $H$.
\end{defn}
It is highly convenient to construct a family of orthogonal exponential functions for \( L^2(\mu) \) using a Hadamard triple. However, the primary challenge lies in verifying whether the constructed set \( \Lambda \) forms an orthonormal basis for \( L^2(\mu) \). For self-affine measures, we have the following well-known result.
\begin{lemma}[{\cite{DH1}}]\label{lem2.3}
	Let \((M, B, L)\) be a Hadamard triple. Then the self-affine measure \(\mu_{M, B}\) is spectral.
\end{lemma}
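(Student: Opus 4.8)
The plan is to apply the Jorgensen--Pedersen criterion (Lemma~\ref{lemma 2.1}): it suffices to produce a countable $\Lambda \subset \mathbb{R}^n$ for which $E_\Lambda$ is orthogonal in $L^2(\mu_{M,B})$ and for which the associated function $Q_{\mu_{M,B},\Lambda}$ is identically $1$. The natural candidate, dictated by the Hadamard triple, is the set of finite digit expansions in base $M^*$ with digits in $L$, namely
\[
\Lambda \;=\; \Bigl\{\, \ell_0 + M^*\ell_1 + (M^*)^2\ell_2 + \cdots + (M^*)^{k}\ell_{k} \;:\; k \ge 0,\ \ell_j \in L \,\Bigr\}.
\]
Writing $\widehat{\mu}_{M,B}(\xi) = \prod_{j=1}^{\infty} m_B\bigl((M^*)^{-j}\xi\bigr)$ with mask $m_B(\xi) = \frac{1}{\#B}\sum_{b\in B} e^{-2\pi i\langle b,\xi\rangle}$, the entire scheme is driven by the identity, equivalent to the unitarity of the Hadamard matrix $H$,
\[
\sum_{\ell \in L} \bigl| m_B\bigl((M^*)^{-1}(\xi+\ell)\bigr) \bigr|^2 = 1 \qquad (\xi \in \mathbb{R}^n).
\]

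The first, routine step is orthogonality. For distinct $\lambda,\lambda' \in \Lambda$, one factors $\lambda-\lambda'$ so that at the first level of disagreement a factor $m_B\bigl((M^*)^{-1}(\ell-\ell')\bigr)$ with $\ell\neq\ell'$ in $L$ appears; unitarity of $H$ forces this factor to vanish, and the product expansion of $\widehat{\mu}_{M,B}$ then gives $\widehat{\mu}_{M,B}(\lambda-\lambda')=0$. Equivalently $(\Lambda-\Lambda)\setminus\{\bm{0}\} \subset \mathcal{Z}(\widehat{\mu}_{M,B})$, so $E_\Lambda$ is an orthogonal family, and by Lemma~\ref{lemma 2.1}$(iii)$ the function $Q:=Q_{\mu_{M,B},\Lambda}$ extends to an entire function with $Q\le 1$ and $Q(\bm{0})=1$.

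The substantive step --- and the main obstacle --- is completeness, i.e.\ upgrading $Q\le 1$ to $Q\equiv 1$. The plan is to exploit self-affine invariance: using the self-similar decomposition $\Lambda = L + M^*\Lambda$, the $\mathbb{Z}^n$-periodicity of $m_B$, and the Hadamard identity above, one derives the transfer (Ruelle-type) relation
\[
Q(\xi) = \sum_{\ell\in L} \bigl| m_B\bigl((M^*)^{-1}(\xi+\ell)\bigr)\bigr|^2\, Q\bigl((M^*)^{-1}(\xi+\ell)\bigr),
\]
which exhibits $Q(\xi)$ as a convex combination of the values of $Q$ at the contracted points $(M^*)^{-1}(\xi+\ell)$. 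Since $Q$ is continuous, $Q\le 1$, and $Q(\bm{0})=1$, this forces $Q=1$ to propagate from $\bm{0}$ along the dual dynamics, and the task reduces to ruling out a persistent defect set $\{Q<1\}$ invariant under $\xi\mapsto (M^*)^{-1}(\xi+\ell)$. I expect this to require the genuinely hard input of the theory of Hadamard triples (Dutkay--Haussermann--Lai): the naive $\Lambda$ need not be complete when the attractor has overlaps, so one must first reduce --- after conjugating $M$ by an integer similarity and, if necessary, replacing $(B,L)$ within the same Hadamard-triple class --- to a normal form with controlled overlap (a quasi-product or no-overlap configuration), and then run an equidistribution/ergodic argument on the transfer operator, together with the non-vanishing of the weights along the relevant periodic orbits, to conclude $Q\equiv 1$. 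Establishing this reduction and the associated orbit analysis is where essentially all the difficulty lies; the orthogonality and the Jorgensen--Pedersen bookkeeping are formal by comparison.
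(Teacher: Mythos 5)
First, note that the paper does not prove this lemma at all: it is quoted as a black box from Dutkay--Haussermann--Lai \cite{DH1} (it is essentially the main theorem of that paper), so there is no in-paper argument to compare yours against. Your outline is a faithful reconstruction of the actual strategy of \cite{DH1}: the candidate spectrum of finite digit expansions $\Lambda=\bigl\{\sum_{j=0}^{k}(M^*)^{j}\ell_j\bigr\}$, orthogonality from the vanishing of $m_B\bigl((M^*)^{-1}(\ell-\ell')\bigr)$ forced by unitarity of $H$, and the transfer identity $Q(\xi)=\sum_{\ell\in L}\bigl|m_B\bigl((M^*)^{-1}(\xi+\ell)\bigr)\bigr|^2\,Q\bigl((M^*)^{-1}(\xi+\ell)\bigr)$, which is legitimate here because $L\subset\mathbb{Z}^n$ and $M\in M_n(\mathbb{Z})$ give $\Lambda\subset\mathbb{Z}^n$ while $m_B$ is $\mathbb{Z}^n$-periodic (you should also verify that $\Lambda=L+M^*\Lambda$ is a \emph{disjoint} decomposition before splitting the sum over $\Lambda$).

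As a proof, however, the proposal has a genuine gap, and you locate it yourself: everything after ``the task reduces to ruling out a persistent defect set'' is a description of what must be done rather than an argument. Continuity of $Q$, the bound $Q\le 1$, the normalization $Q(\bm{0})=1$, and the convex-combination identity do not by themselves force $Q\equiv 1$; the set $\{Q<1\}$ can a priori be a nonempty invariant set for the dual maps $\xi\mapsto (M^*)^{-1}(\xi+\ell)$, and indeed there are Hadamard triples for which the naive $\Lambda$ fails to be a spectrum, which is precisely why \cite{DH1} must first reduce to a quasi-product (no-overlap) normal form and then carry out an equidistribution analysis of the invariant sets and periodic orbits of the transfer operator. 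That reduction and orbit analysis constitute essentially the entire content of the theorem, and they are absent from the proposal. So what you have written is a correct roadmap pointing to the literature, not a proof; for the purposes of this paper, citing \cite{DH1}, as the authors do, is the appropriate resolution.
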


For $n\geq1$, define
$$\mu_n=\delta_{M^{-1}D}*\delta_{M^{-2}D}*\cdots*\delta_{M^{-n}D},$$
and
\begin{equation}\label{2.5}
\mu_{M,D}=\mu_n * \mu_{>n}.
\end{equation}
Let \(\Lambda_n\) be a bi-zero set of \(\mu_n\) with \(\bm{0} \in \Lambda_n\) and \(\Lambda_n \subset \Lambda_{n+1}\) for \(n \geq 1\). Then $\Lambda = \cup_{n=1}^{\infty} \Lambda_n$ is an orthogonal set of $\mu_{M,D}$.
The following proposition determines whether $\Lambda$ is a spectrum of $\mu_{M,D}$. Since the proof follows a similar approach as \cite[Theorem 2.3]{AHH}, we omit it.

\begin{lemma}[{\cite{AHH}}]\label{lemma 2.7}
 With the above notations, suppose that $\xi=(\xi_{1}, \xi_{2}), |\xi_{i}|\leq \frac{1}{3}$ and \( \{\alpha_n\}_{n=1}^{\infty} \) is an increasing sequence of integers.
\begin{itemize}
		\item[$(i)$] If \( \Lambda_{\alpha_n} \) is a spectrum of \( \mu_{\alpha_n} \) and
		\[
		\inf_{\lambda \in \Lambda_{\alpha_{n + l}} \setminus \Lambda_{\alpha_n} } |\mu_{>\alpha_{n + l}}(\lambda + \xi)|^2 \geq \beta > 0
		\]
		for \( n, l \geq 1 \), then \( \Lambda \) is a spectrum of \( \mu_{M,D} \).
		\item[$(ii)$]
		If
		\[
		\sup_{\lambda \in \Lambda_{\alpha_{n + 1}} \setminus \Lambda_{\alpha_n}} |\mu_{>\alpha_{n + 1}}(\lambda + \xi)|^2 \leq \beta_n
		\]
		for some sequence \( \{\beta_n\}_{n=1}^{\infty} \) with \( \sum_{n=1}^{\infty} \beta_n < \infty \), then \( \Lambda \) is not a spectrum of \( \mu_{M,D} \).
	\end{itemize}
\end{lemma}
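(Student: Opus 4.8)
The plan is to derive both statements from the Jorgensen--Pedersen criterion (Lemma \ref{lemma 2.1}). Writing $Q(\xi):=Q_{\mu_{M,D},\Lambda}(\xi)=\sum_{\lambda\in\Lambda}|\widehat{\mu}_{M,D}(\xi+\lambda)|^2$ and $e_\lambda(x)=e^{-2\pi i\langle\lambda,x\rangle}$, the set $\Lambda$ is a spectrum if and only if $Q\equiv1$, while the assumed orthogonality of $\Lambda$ already gives $Q\le1$ and that $Q$ admits an entire extension. Throughout I use: the factorization $\widehat{\mu}_{M,D}=\widehat{\mu}_{\alpha_m}\,\widehat{\mu}_{>\alpha_m}$ from $\mu_{M,D}=\mu_{\alpha_m}*\mu_{>\alpha_m}$; the monotone convergence $Q_{\mu_{M,D},\Lambda_{\alpha_n}}\nearrow Q$ as $n\to\infty$ (because $\Lambda_{\alpha_n}\uparrow\Lambda$); and the two elementary pointwise limits $\widehat{\mu}_{\alpha_m}\to\widehat{\mu}_{M,D}$ and $\widehat{\mu}_{>\alpha_m}\to1$ as $m\to\infty$, which hold since $\mu_{\alpha_m}\to\mu_{M,D}$ and $\mu_{>\alpha_m}\to\delta_{\bm 0}$ weakly (the tail support shrinks to $\{\bm 0\}$ as $M$ is expanding).

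For part $(i)$ I would fix $\xi$ in the box and fix $n$. Since $\Lambda_{\alpha_{n+l}}$ is a spectrum of $\mu_{\alpha_{n+l}}$ we have $\sum_{\lambda\in\Lambda_{\alpha_{n+l}}}|\widehat{\mu}_{\alpha_{n+l}}(\xi+\lambda)|^2=1$, so using $Q(\xi)\ge Q_{\mu_{M,D},\Lambda_{\alpha_{n+l}}}(\xi)$,
\[
1-Q(\xi)\le 1-Q_{\mu_{M,D},\Lambda_{\alpha_{n+l}}}(\xi)=\sum_{\lambda\in\Lambda_{\alpha_{n+l}}}|\widehat{\mu}_{\alpha_{n+l}}(\xi+\lambda)|^2\bigl(1-|\widehat{\mu}_{>\alpha_{n+l}}(\xi+\lambda)|^2\bigr).
\]
Splitting the sum over $\Lambda_{\alpha_n}$ and $\Lambda_{\alpha_{n+l}}\setminus\Lambda_{\alpha_n}$, the first piece is a \emph{finite} sum whose terms carry the factor $1-|\widehat{\mu}_{>\alpha_{n+l}}(\xi+\lambda)|^2\to0$ as $l\to\infty$, hence tends to $0$; on the second piece the hypothesis $|\widehat{\mu}_{>\alpha_{n+l}}(\xi+\lambda)|^2\ge\beta$ gives the bound $1-|\widehat{\mu}_{>\alpha_{n+l}}|^2\le1-\beta$, so that piece is at most $(1-\beta)\bigl(1-\sum_{\lambda\in\Lambda_{\alpha_n}}|\widehat{\mu}_{\alpha_{n+l}}(\xi+\lambda)|^2\bigr)$. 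Letting $l\to\infty$ yields $1-Q(\xi)\le(1-\beta)\bigl(1-Q_{\mu_{M,D},\Lambda_{\alpha_n}}(\xi)\bigr)$, and then $n\to\infty$ gives $1-Q(\xi)\le(1-\beta)\bigl(1-Q(\xi)\bigr)$; since $\beta>0$ this forces $Q(\xi)=1$. As this holds on the open box and $Q$ is entire (Lemma \ref{lemma 2.1}$(iii)$), the identity theorem gives $Q\equiv1$, i.e.\ $\Lambda$ is a spectrum of $\mu_{M,D}$.

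For part $(ii)$ I would first turn the summable-tail hypothesis into uniform smallness of the tail of $Q$. For $\lambda\in\Lambda_{\alpha_{n+1}}\setminus\Lambda_{\alpha_n}$ one has $|\widehat{\mu}_{M,D}(\xi+\lambda)|^2\le|\widehat{\mu}_{>\alpha_{n+1}}(\xi+\lambda)|^2$, so pulling out the supremum and using that $\Lambda_{\alpha_{n+1}}$ is orthogonal for $\mu_{\alpha_{n+1}}$ (whence $\sum_{\lambda\in\Lambda_{\alpha_{n+1}}}|\widehat{\mu}_{\alpha_{n+1}}(\xi+\lambda)|^2\le1$ by Lemma \ref{lemma 2.1}$(i)$) yields $\sum_{\lambda\in\Lambda_{\alpha_{n+1}}\setminus\Lambda_{\alpha_n}}|\widehat{\mu}_{M,D}(\xi+\lambda)|^2\le\beta_n$. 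Summing over $n\ge N$ gives $\sum_{\lambda\in\Lambda\setminus\Lambda_{\alpha_N}}|\widehat{\mu}_{M,D}(\xi+\lambda)|^2\le\varepsilon_N:=\sum_{n\ge N}\beta_n\to0$, uniformly on the box. Now suppose, for contradiction, that $\Lambda$ is a spectrum, so $Q\equiv1$; with the positive finite-rank frame operator $S_Nf=\sum_{\lambda\in\Lambda_{\alpha_N}}\langle f,e_\lambda\rangle_{L^2(\mu_{M,D})}e_\lambda$ we then obtain $\langle S_Ne_\xi,e_\xi\rangle\ge1-\varepsilon_N$ for every $\xi$ in the box.

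The main obstacle is converting this into a genuine contradiction, and this is exactly the point where I would follow \cite{AHH}. The intended mechanism is a dimension/trace count: $S_N$ has rank $\#\Lambda_{\alpha_N}$ and $\mathrm{tr}(S_N)=\#\Lambda_{\alpha_N}$ (since $\|e_\lambda\|_{L^2(\mu_{M,D})}=1$), whereas $L^2(\mu_{M,D})$ is infinite-dimensional because $\mu_{M,D}$ is non-atomic; if one can select $K$ mutually orthogonal unit exponentials $e_{\xi^{(1)}},\dots,e_{\xi^{(K)}}$ with frequencies in the box, then $\#\Lambda_{\alpha_N}=\mathrm{tr}(S_N)\ge\sum_{j=1}^{K}\langle S_Ne_{\xi^{(j)}},e_{\xi^{(j)}}\rangle\ge K(1-\varepsilon_N)$, which is impossible once $\varepsilon_N<1$ and $K$ is large. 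The delicate part is justifying that enough such orthogonal exponentials exist (equivalently, that the uniform closeness of every $e_\xi$ to the fixed finite-dimensional range of $S_N$ is incompatible with completeness of $E_\Lambda$); here the equicontinuity/normal-families input of Lemma \ref{lem2.8} is used, as in the proof of \cite[Theorem 2.3]{AHH}, which is why that argument is invoked rather than reproduced. The remaining steps — interchanging the $l\to\infty$ limit with the finite sum, and the weak-convergence facts for $\widehat{\mu}_{\alpha_m}$ and $\widehat{\mu}_{>\alpha_m}$ — are routine.
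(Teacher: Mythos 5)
Your part $(i)$ is correct and complete: the estimate $1-Q_{\mu,\Lambda_{\alpha_{n+l}}}(\xi)=\sum_{\lambda\in\Lambda_{\alpha_{n+l}}}|\widehat{\mu}_{\alpha_{n+l}}(\xi+\lambda)|^2\bigl(1-|\widehat{\mu}_{>\alpha_{n+l}}(\xi+\lambda)|^2\bigr)$, the split over $\Lambda_{\alpha_n}$ versus $\Lambda_{\alpha_{n+l}}\setminus\Lambda_{\alpha_n}$, the two limits, and the fixed-point inequality $1-Q\le(1-\beta)(1-Q)$ followed by analytic continuation via Lemma \ref{lemma 2.1}$(iii)$ is exactly the standard Jorgensen--Pedersen-style argument behind \cite[Theorem 2.3]{AHH}; since the paper itself omits the proof and merely cites that theorem, here you actually supply more detail than the paper does, and correctly.

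Part $(ii)$, however, has a genuine gap, and the mechanism you sketch to close it would fail. Your first half is fine: pulling out the supremum and using Bessel for $\mu_{\alpha_{n+1}}$ (Lemma \ref{lemma 2.1}$(i)$) to get $\sum_{\lambda\in\Lambda\setminus\Lambda_{\alpha_N}}|\widehat{\mu}_{M,D}(\xi+\lambda)|^2\le\varepsilon_N:=\sum_{n\ge N}\beta_n$ uniformly on the box is correct and is indeed the opening move of the cited proof. But the proposed contradiction---selecting $K$ mutually orthogonal unit exponentials $e_{\xi^{(1)}},\dots,e_{\xi^{(K)}}$ with frequencies in $[-\frac13,\frac13]^2$ and running a trace count against the rank-$\#\Lambda_{\alpha_N}$ operator $S_N$---cannot be completed for a structural reason: orthogonality of $e_{\xi^{(i)}}$ and $e_{\xi^{(j)}}$ forces $\xi^{(i)}-\xi^{(j)}\in\mathcal{Z}(\widehat{\mu}_{M,D})$, and since $\widehat{\mu}_{M,D}$ is continuous with $\widehat{\mu}_{M,D}(\bm{0})=1$, the closed set $\mathcal{Z}(\widehat{\mu}_{M,D})$ stays at some positive distance $\delta_0$ from the origin. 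Hence any mutually orthogonal family with frequencies in the fixed box is $\delta_0$-separated and its cardinality is bounded by a constant $K_0$ depending only on $\mu$, \emph{independent of $N$}, whereas your trace inequality $\#\Lambda_{\alpha_N}=\operatorname{tr}(S_N)\ge K(1-\varepsilon_N)$ would need $K$ to exceed $\#\Lambda_{\alpha_N}=p^{\alpha_N}\to\infty$ to produce a contradiction. The same obstruction defeats the fallback version in your parenthetical: for fixed $N$ the bound $\varepsilon_N$ is a fixed constant while $\dim\mathrm{range}(S_N)$ grows with $N$, and one cannot extract near-orthonormal exponentials from a small frequency box (their Gram matrix $[\widehat{\mu}(\xi^{(i)}-\xi^{(j)})]$ has off-diagonal entries bounded away from $0$), so ``uniform closeness to a finite-dimensional range'' is not by itself incompatible with completeness. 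In short, everything after the tail estimate is outsourced to \cite{AHH} (``the delicate part''), and the specific dimension-count route you propose in its place is a dead end; the actual completion in \cite[Theorem 2.3]{AHH} exploits the nested structure of $\Lambda=\cup_n\Lambda_{\alpha_n}$ and the equicontinuity input (Lemma \ref{lem2.8}) in a different way, not a rank bound. Since $(ii)$ is the half of the lemma the paper actually uses critically (in Proposition \ref{pro 3.6}), this gap is the substantive defect of the proposal.
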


In \cite{LDL}, Lu et al. established the following two lemmas, which are instrumental in our proof.
\begin{lemma}[{\cite[Lemma 2.5]{LDL}}]\label{lemma 2.8}
	Let \( \rho = \frac{s}{t} \in (0, 1) \) with \( s > 1 \) and \( \{ f_n(x) \}_{n=1}^{\infty} \) satisfy \( |f_n(x)| \leq 1 \) for any \( x \in \mathbb{R} \). Suppose that there exists a positive constant \( b_0 < 1 \) such that \( |f_n(x)| \leq b_0 \) for any \( |<x>| \geq \frac{1}{2t} \), where $<x>\in (-\frac{1}{2}, \frac{1}{2}]$ and $x-<x> \in \mathbb Z$. Then there is a constant \( c_0 > 0 \) such that
	\[
	\prod_{n=1}^{\infty} |f_n(\rho^n x)| \leq c_0 (\ln |x|)^{-\beta}, \quad |x| > 1,
	\]
	where \( \beta = \log_{a} b_0 > 0 \) with \( a = \log_{t} s\in (0, 1) \).
	
\end{lemma}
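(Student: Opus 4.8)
The plan is to show that, for $|x|$ large, a definite number of the factors $|f_n(\rho^n x)|$—a number growing like $\log_{1/a}\ln|x|$—must be bounded by $b_0$, so the whole product is at most $b_0$ raised to that count, which is precisely $(\ln|x|)^{-\beta}$. Write $y_n=\rho^n x$ and let $m_n:=y_n-\langle y_n\rangle\in\mathbb Z$ be the nearest integer to $y_n$, so $\langle y_n\rangle$ is the signed residual in $(-\tfrac12,\tfrac12]$. Call an index $n$ \emph{good} if $|\langle y_n\rangle|\ge\frac1{2t}$ (so $|f_n(y_n)|\le b_0$ by hypothesis) and \emph{bad} otherwise. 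Since $|f_n|\le1$ always and $y_n\to0$, only finitely many indices matter: let $N$ be the largest integer with $\rho^N|x|\ge1$, so every $n\le N$ has $|y_n|\ge1$ and hence $|m_n|\ge1$, and
\[
\prod_{n=1}^{\infty}|f_n(y_n)|\le \prod_{n=1}^{N}|f_n(y_n)|\le b_0^{\,G},
\]
where $G$ is the number of good indices in $[1,N]$. Thus it suffices to bound $G$ from below. Note $N=\log_{t/s}|x|+O(1)=\frac{\log_t|x|}{1-a}+O(1)$, since $1-a=\frac{\ln(t/s)}{\ln t}$.

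The heart of the argument is the arithmetic rigidity forced by two consecutive bad indices. If $j$ and $j+1$ are both bad, then from $t y_{j+1}=s y_j$ we get
\[
t m_{j+1}-s m_j = s\langle y_j\rangle - t\langle y_{j+1}\rangle,
\]
whose right-hand side has absolute value $<\frac{s}{2t}+\frac12<1$ because $1<s<t$; being an integer, it must vanish, so $t m_{j+1}=s m_j$. As $\gcd(s,t)=1$ and $m_j,m_{j+1}\ne0$, comparing $t$-adic valuations gives $v_t(m_{j+1})=v_t(m_j)-1$. Consequently, along a maximal bad block $[u,v]$ the valuations strictly decrease by $1$, $v_t(m_u)>v_t(m_{u+1})>\cdots\ge0$, so the block length obeys
\[
v-u+1\le v_t(m_u)+1\le \log_t|m_u|+1\le \log_t\!\big(2\rho^{u}|x|\big)+1 = \log_t|x|+(a-1)u+C_0,
\]
using $|m_u|\le 2\rho^u|x|$ and $\log_t\rho=a-1$. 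This \emph{position-dependent} length bound is the key point: bad runs that begin deep inside $[1,N]$ are forced to be short.

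Finally I would convert this into a count. List the good indices as $0=g_0<g_1<\cdots<g_G\le N$ ($g_0=0$ a formal start); between $g_i$ and $g_{i+1}$ lies one maximal bad block beginning at $g_i+1$, so the length bound yields $g_{i+1}-g_i-1\le \log_t|x|+(a-1)(g_i+1)+C_0$, i.e. the recursion
\[
g_{i+1}\le a\,g_i + P,\qquad P:=\log_t|x|+C_1,
\]
whose solution is $g_i\le \frac{P}{1-a}\big(1-a^{i}\big)$. On the other hand the final bad run $[g_G+1,N]$ satisfies the same bound, which forces $g_G\ge N-O(1)=\frac{\log_t|x|}{1-a}-O(1)$. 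Comparing the two estimates at $i=G$ gives $(\log_t|x|)\,a^{G}\le C_3$, whence
\[
G\ge \log_{1/a}(\log_t|x|)-C_4=\log_{1/a}\ln|x|-C_5 .
\]
Substituting into $\prod\le b_0^{G}$ and using $b_0^{\log_{1/a}L}=L^{-\beta}$ with $\beta=\frac{\ln b_0}{\ln a}=\log_a b_0$ produces the claimed bound $\prod_{n=1}^\infty|f_n(\rho^n x)|\le c_0(\ln|x|)^{-\beta}$. The main obstacle is the block analysis of the second paragraph: extracting the exact identity $t m_{j+1}=s m_j$ from the bad-index hypothesis (which crucially relies on $s<t$ to force the integer error to vanish) and then turning the decreasing $t$-valuations together with the magnitude bound $|m_u|\le 2\rho^u|x|$ into the position-dependent length estimate. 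Once that is in hand, the geometric recursion and the bookkeeping that yields the $\log_{1/a}$ count are routine.
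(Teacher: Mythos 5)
The paper does not prove this lemma---it is quoted from \cite[Lemma 2.5]{LDL} without proof---so there is no in-paper argument to compare against; your proof is correct and is essentially the standard argument for this counting lemma: the identity $tm_{j+1}=sm_j$ forced by two consecutive bad indices (using $s<t$ to kill the integer error), the resulting strictly decreasing $t$-adic valuations along a maximal bad block, the position-dependent length bound via $|m_u|\le 2\rho^u|x|$, and the geometric recursion $g_{i+1}\le ag_i+P$ giving $G\gtrsim \log_{1/a}\ln|x|$. The only points needing a word of care are that $\gcd(s,t)=1$ must be assumed (as it is in \cite{LDL}; note also that $v_t(sm)=v_t(m)$ still holds for composite $t$ precisely because of this coprimality), that empty bad blocks should be absorbed into the recursion by enlarging the constant in $P$, and that the range of $|x|>1$ where the counting argument has not yet kicked in is handled trivially from $\prod_n|f_n|\le 1$ by enlarging $c_0$.
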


\begin{lemma}[{\cite[Lemma 2.6]{LDL}}]\label{lem2.7}
	Let \( \rho = \frac{s}{t} \in (0, 1) \) with \( s > 1 \) and \( \gcd(s, t) = 1 \). Suppose that a continuous non-negative real-valued function \( f(x_1, x_2, \cdots, x_n) \) with \( f(x_1, x_2, \cdots, x_n) \leq 1 \) satisfies the following conditions:
	\begin{enumerate}
		\item[$(i)$] There exist non-zero integers \( k_1, k_2, \cdots, k_n \) such that
		\[
		f(x_1, x_2, \cdots, x_n) = f(x_1 + k_1, x_2 + k_2, \cdots, x_n + k_n), \quad (x_1, x_2, \cdots, x_n) \in \mathbb{R}^n.
		\]
		\item[$(ii)$] There exists an integer \( \gamma \in \mathbb{N}^+ \) such that
		\[
		\left\{ (x_1, \cdots, x_n)^* : f(x_1, \cdots, x_n) = 1 \right\} \subset \left\{ \left( \frac{\ell}{\gamma}, x_2, \cdots, x_n \right)^* : \ell \in \mathbb{Z}, x_2, \cdots, x_n \in \mathbb{R} \right\}.
		\]
	\end{enumerate}
	Then, there exist two constants \( c_0, \beta > 0 \) such that
	\[
	\prod_{n=1}^{\infty} f\left( \rho^n x_1, x_{l,2}, \cdots, x_{l,n} \right) \leq c_0 (\ln |x_1|)^{-\beta}, \quad |x_1| > 1,
	\]
	for any sequence \( \{ x_{l,i} :2 \leq i \leq n, 1 \le l  \} \subset \mathbb{R} \).
	
\end{lemma}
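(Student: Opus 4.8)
The plan is to collapse the multivariate infinite product onto the scalar estimate of Lemma \ref{lemma 2.8} by freezing the transverse coordinates and extracting a gap that is \emph{uniform} in them. Since only the first coordinate is scaled by $\rho^j$ in the product, the entries $x_{j,2},\dots,x_{j,n}$ act as spectator parameters. For the given sequence $\{x_{l,i}\}$ I would set, for each index $j$ of the product,
\[
\tilde f_j(y) := f\!\left(\tfrac{y}{\gamma},\, x_{j,2},\dots,x_{j,n}\right),\qquad y\in\mathbb{R},
\]
so that each $\tilde f_j$ is continuous with $0\le \tilde f_j\le 1$, and, writing $y=\gamma x_1$, the target product becomes exactly $\prod_{j\ge 1}\tilde f_j(\rho^j y)=\prod_{j\ge1} f(\rho^j x_1, x_{j,2},\dots,x_{j,n})$. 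The rescaling by $\gamma$ is chosen so that the locus $\{f=1\}$, which by hypothesis (ii) lies over $x_1\in\frac1\gamma\mathbb{Z}$, is pushed onto the integers in the $y$-variable; this matches the geometry required by Lemma \ref{lemma 2.8}, whose hypothesis localizes the ``large'' values of a factor to a $\frac1{2t}$-neighborhood of $\mathbb{Z}$.

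The heart of the argument is a \emph{uniform gap estimate}: there exists a constant $b_0<1$, independent of the spectator coordinates, such that
\[
f(x_1,x_2,\dots,x_n)\le b_0 \qquad\text{whenever}\qquad \operatorname{dist}\!\left(x_1,\tfrac1\gamma\mathbb{Z}\right)\ge \tfrac{1}{2t\gamma},
\]
equivalently $\tilde f_j(y)\le b_0$ for every $j$ and every $y$ with $|\langle y\rangle|\ge\frac1{2t}$. I would prove this by contradiction: if it fails, there are points $(x_1^{(m)},\dots,x_n^{(m)})$ with $\operatorname{dist}(x_1^{(m)},\frac1\gamma\mathbb{Z})\ge\frac1{2t\gamma}$ and $f\to1$. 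Using the joint periodicity (i) along $(k_1,\dots,k_n)$ — under which both $f$ and the good set $\{\operatorname{dist}(x_1,\frac1\gamma\mathbb{Z})\ge\frac1{2t\gamma}\}$ are invariant, since $k_1\in\mathbb{Z}\subset\frac1\gamma\mathbb{Z}$ — one translates each point so its first coordinate lies in a compact interval of length $|k_1|$, and then passes to a limit to produce a point with $f=1$ yet $\operatorname{dist}(x_1,\frac1\gamma\mathbb{Z})>0$, contradicting (ii). \emph{This compactness reduction is the main obstacle}: the periodicity in (i) controls only the diagonal direction, leaving $x_2,\dots,x_n$ a priori unbounded, so one must invoke the full periodic (torus) structure of $f$ — which holds in every application, where $f$ is a finite trigonometric-polynomial modulus and hence $\mathbb{Z}^n$-periodic — to push the transverse variables into a compact box, ensuring the relevant supremum is taken over a compact set and the gap is genuinely attained.

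With the uniform gap in hand, the remaining step is a direct appeal to Lemma \ref{lemma 2.8}. The sequence $\{\tilde f_j\}_{j\ge1}$ satisfies $|\tilde f_j|\le1$ and $\tilde f_j(y)\le b_0$ for $|\langle y\rangle|\ge\frac1{2t}$, so with $\rho=\frac st\in(0,1)$, $s>1$, $\gcd(s,t)=1$, Lemma \ref{lemma 2.8} yields
\[
\prod_{j=1}^{\infty}\tilde f_j(\rho^j y)\le c_0(\ln|y|)^{-\beta},\qquad |y|>1,
\]
with $\beta=\log_a b_0>0$ and $a=\log_t s\in(0,1)$. Substituting back $y=\gamma x_1$: for $|x_1|>1$ one has $|y|=\gamma|x_1|>1$ and $\ln|y|=\ln\gamma+\ln|x_1|\ge\ln|x_1|$ (as $\gamma\ge1$), hence $(\ln|y|)^{-\beta}\le(\ln|x_1|)^{-\beta}$, giving
\[
\prod_{j=1}^{\infty} f(\rho^j x_1, x_{j,2},\dots,x_{j,n})=\prod_{j=1}^{\infty}\tilde f_j(\rho^j y)\le c_0(\ln|x_1|)^{-\beta},\qquad |x_1|>1,
\]
after absorbing the constant shift into $c_0$. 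Since $b_0$, and hence $\beta$ and $c_0$, do not depend on the spectator sequence $\{x_{l,i}\}$, the bound is uniform as required, completing the proof.
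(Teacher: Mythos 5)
Your overall route is the right one, and indeed the only one available for comparison: the paper does not prove this lemma but quotes it from \cite[Lemma 2.6]{LDL}, and the intended argument there is exactly your reduction — rescale the first variable by $\gamma$ so that the exceptional set $\{f=1\}$ sits over $\mathbb{Z}$, extract a uniform gap $b_0<1$ off the $\tfrac{1}{2t}$-neighborhoods of the integers, and feed the one-variable functions $\tilde f_j$ into Lemma \ref{lemma 2.8}. Your bookkeeping is correct throughout: $\operatorname{dist}(y,\mathbb{Z})\ge\tfrac{1}{2t}$ corresponds to $\operatorname{dist}(x_1,\tfrac{1}{\gamma}\mathbb{Z})\ge\tfrac{1}{2t\gamma}$, the substitution $y=\gamma x_1$ preserves $|y|>1$ and only improves the logarithm since $\gamma\ge 1$, and the constants $b_0,\beta,c_0$ are visibly independent of the spectator sequence, which is the uniformity the applications require.

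The obstacle you flagged in the compactness step deserves to be stated more forcefully: with hypothesis $(i)$ read literally — invariance under the single diagonal translation by $(k_1,\dots,k_n)$ — the lemma is actually \emph{false}, so your importing of full-lattice periodicity is not a convenience but a necessary correction of the statement. For a counterexample take $n=2$, $\gamma=1$, and
\[
f(x_1,x_2)=1-\frac{\operatorname{dist}(x_1,\mathbb{Z})}{1+(x_1-x_2)^2},
\]
which is continuous with $\tfrac12\le f\le 1$, satisfies $(i)$ with $k_1=k_2=1$ and $(ii)$ with $\{f=1\}=\mathbb{Z}\times\mathbb{R}$; yet for any $x_1$ the choice $x_{l,2}=\rho^l x_1+2^l$ makes each factor at least $1-\tfrac12\,4^{-l}$, so the product exceeds the absolute constant $\tfrac56$ and cannot decay like $(\ln|x_1|)^{-\beta}$. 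The diagonal periodicity controls nothing transverse to the diagonal, and that freedom kills the uniform gap. Your patch is legitimate in context: in every application in this paper $f$ is the modulus of a trigonometric polynomial with rational data and is periodic with respect to a full-rank lattice — e.g.\ in the proof of Proposition \ref{pro 5.1}, where $f=|m_{\widetilde{D}}|$ and $c''=\tfrac{c_1}{c_2}$, one has the \emph{separate} periods $f(\xi_1+c_2,\xi_2)=f(\xi_1,\xi_2)$ and $f(\xi_1,\xi_2+1)=f(\xi_1,\xi_2)$, because $c_2(d_{j,1}+c''d_{j,2})=c_2d_{j,1}+c_1d_{j,2}\in\mathbb{Z}$, even though only the diagonal identity is recorded there. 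Under such full periodicity your compactness argument closes cleanly: the region $\{\operatorname{dist}(x_1,\tfrac{1}{\gamma}\mathbb{Z})\ge\tfrac{1}{2t\gamma}\}$ is lattice-invariant (every integer period lies in $\tfrac{1}{\gamma}\mathbb{Z}$ since $\gamma\in\mathbb{N}^+$), descends to a compact subset of the torus on which the continuous function $f<1$ attains a maximum $b_0<1$ uniform in the transverse variables. So what you have proved is the corrected statement — the version actually needed and used — and in doing so you exposed a genuine defect in the lemma as transcribed; the hypothesis $(i)$ should be read as periodicity in each coordinate separately.
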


At the end of this section, we present a conclusion that reflects the structure of the digit set satisfying condition \eqref{1.2}.
\begin{lemma}\label{lemma 2.9}
	Let \( D = \{ d_0, d_1, \cdots, d_{p-1} \} \subset \mathbb{Z}^2 \), and let \( E_{\bm{a}} \) be defined by \eqref{1.2} and \eqref{1.3}, respectively. Set $d_{i}=(d_{i,1}, d_{i,2})$, \( c'' = \frac{c_1}{c_2} \in \mathbb{Q} \setminus E_{\bm{a}} \) with \( \gcd(c_1, c_2) = 1 \) and $\mathcal{B}= \{ c_2 d_{0,1} + c_1 d_{0,2}, \cdots, c_2 d_{p-1,1} + c_1 d_{p-1,2} \}$, then
	\[
	\mathcal{B} = \{ 0, 1, \cdots, p-1 \} \pmod{p}\;\text{and}\;\gcd(\mathcal{B}) = 1.
	\]

\end{lemma}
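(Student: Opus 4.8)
The plan is to treat the two assertions separately: the congruence $\mathcal B\equiv\{0,1,\dots,p-1\}\pmod p$ will follow from evaluating $\widehat{\delta}_D$ on the cyclic group generated by $\bm a/p$, while the coprimality $\gcd(\mathcal B)=1$ will rest on the structural fact that the differences of $D$ generate all of $\mathbb Z^2$, which I expect to be the only delicate point. For the first assertion I would use the zeros at the rational points $\tfrac{j\bm a}{p}$. Writing $\zeta=e^{-2\pi i/p}$ and $n_k=\langle\bm a,d_k\rangle=a_1d_{k,1}+a_2d_{k,2}$, condition \eqref{x1.3} forces $\widehat{\delta}_D(\tfrac{j\bm a}{p})=\tfrac1p\sum_{k=0}^{p-1}\zeta^{\,jn_k}=0$ for $j=1,\dots,p-1$. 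Letting $c_m=\#\{k:n_k\equiv m \pmod p\}$, the polynomial $\sum_{m=0}^{p-1}c_mX^m$ of degree at most $p-1$ vanishes at every primitive $p$-th root of unity, hence is a scalar multiple of $\Phi_p(X)=1+X+\cdots+X^{p-1}$; since $\sum_m c_m=p$, all $c_m=1$, so $\{\langle\bm a,d_k\rangle:0\le k\le p-1\}$ is a complete residue system modulo $p$.

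Next I would convert $c''=c_1/c_2\notin E_{\bm a}$ into a proportionality mod $p$. As $\gcd(c_1,c_2)=1$, exclusion from \eqref{1.3} means $c''\ne 0$ and $c_2a_2-c_1a_1\in p\mathbb Z$, i.e. the $2\times2$ determinant of the rows $(c_2,c_1)$ and $(a_1,a_2)$ vanishes mod $p$. Since $(a_1,a_2)\not\equiv(0,0)$ (because $a_1,a_2\in[1,p-1]$) and $(c_2,c_1)\not\equiv(0,0)$ (because $\gcd(c_1,c_2)=1$), linear dependence over $\mathbb F_p$ gives $(c_2,c_1)\equiv\lambda(a_1,a_2)\pmod p$ for some $\lambda\in\mathbb F_p^{\times}$. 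Hence $c_2d_{k,1}+c_1d_{k,2}\equiv\lambda\langle\bm a,d_k\rangle\pmod p$, and as multiplication by $\lambda$ permutes residues, $\mathcal B\equiv\{0,1,\dots,p-1\}\pmod p$, which is the first assertion.

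The crux is $\gcd(\mathcal B)=1$, for which I would prove $L:=\langle d_1-d_0,\dots,d_{p-1}-d_0\rangle=\mathbb Z^2$. Factoring out $e^{-2\pi i\langle\xi,d_0\rangle}$ shows $|\widehat{\delta}_D(\xi)|$ depends only on the inner products $\langle\xi,d_k-d_0\rangle$, so it is invariant under translation by the dual lattice $L^{*}$; since $L\subseteq\mathbb Z^2$ we have $\mathbb Z^2\subseteq L^{*}$ with $[L^{*}:\mathbb Z^2]=m:=[\mathbb Z^2:L]$, so on $\mathbb T^2=\mathbb R^2/\mathbb Z^2$ the $\mathbb Z^2$-periodic function $|\widehat{\delta}_D|$ is invariant under the order-$m$ subgroup $K=L^{*}/\mathbb Z^2$. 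Its zero set is, by \eqref{x1.3}, exactly $H\setminus\{\bm 0\}$ where $H=\langle\tfrac{\bm a}{p}+\mathbb Z^2\rangle$ has prime order $p$ (as $\gcd(a_i,p)=1$); being $K$-invariant it is a union of $K$-cosets, so $m\mid|H\setminus\{\bm 0\}|=p-1$. Then $\gcd(m,p)=1$, whence $H\cap K=\{\bm0\}$, so each $K$-coset meets $H$ in at most one point; a coset of size $m>1$ therefore cannot lie inside $H\setminus\{\bm0\}$, forcing $m=1$ and $L=\mathbb Z^2$.

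Finally, $\langle d_0,\dots,d_{p-1}\rangle\supseteq L=\mathbb Z^2$, so the homomorphism $d\mapsto c_2d_1+c_1d_2$ carries the group generated by $D$ onto $c_2\mathbb Z+c_1\mathbb Z=\gcd(c_1,c_2)\mathbb Z=\mathbb Z$; this image is precisely the subgroup $\gcd(\mathcal B)\mathbb Z$ generated by $\mathcal B=\{c_2d_{k,1}+c_1d_{k,2}\}$, whence $\gcd(\mathcal B)=1$. I expect the cyclotomic counting and the mod-$p$ linear algebra to be routine, with the coset-intersection argument establishing $L=\mathbb Z^2$ being the genuinely delicate step, since it is exactly where the full force of \eqref{x1.3} (the zero set being finite, not merely containing $H\setminus\{\bm0\}$) is used.
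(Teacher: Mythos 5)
Your proof is correct, and it diverges from the paper's in both halves, most substantially in the second. For the residue-system claim you first show that $\{\langle\bm{a},d_k\rangle\}_{k=0}^{p-1}$ is a complete residue system mod $p$ via the cyclotomic polynomial $\Phi_p$ (the counting polynomial $\sum_m c_m X^m$ vanishing at all primitive $p$-th roots of unity), and then transport this to $\mathcal B$ through the unit $\lambda$ with $(c_2,c_1)\equiv\lambda(a_1,a_2)\pmod p$ extracted from $c''\notin E_{\bm a}$; the paper instead argues by contradiction, assuming two elements of $\mathcal B$ agree mod $p$ and running an orthogonality-of-characters count that forces $2p+(\text{nonnegative multiples of }p)=p$. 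These are the same character-sum fact in two packagings, yours the cleaner. The genuine divergence is in $\gcd(\mathcal B)=1$: the paper supposes a prime $h$ divides every element of $\mathcal B$, observes that $\tfrac{1}{ph}(c_2,c_1)^*$ is then a zero of $\widehat{\delta}_{D}$, and reads off $(c_2,c_1)^*\in h\mathbb Z^2$ directly from the explicit zero set \eqref{x1.3}, contradicting $\gcd(c_1,c_2)=1$ in three lines. You instead establish the stronger structural fact that the difference lattice $L=\langle d_1-d_0,\dots,d_{p-1}-d_0\rangle$ equals $\mathbb Z^2$ by the coset argument on the torus, and then push $\mathbb Z^2$ forward under $(x,y)\mapsto c_2x+c_1y$; this is longer but yields a reusable primitivity statement about $D$ that is independent of $c_1,c_2$. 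One step you should make explicit: before writing $m=[\mathbb Z^2:L]$ and calling $K=L^*/\mathbb Z^2$ an order-$m$ group, rule out $\operatorname{rank}L<2$. This is immediate from the same exactness of \eqref{x1.3} you invoke at the end — if $L$ were degenerate, $L^*$ would contain a line, $|\widehat{\delta}_{D}|$ would be constant along it, and the nonempty zero set would be uncountable rather than the countable set $\bigcup_{j=1}^{p-1}\bigl(\tfrac{j\bm a}{p}+\mathbb Z^2\bigr)$ — but it is a hypothesis of the coset count, not a consequence of it, so it needs a sentence. With that sentence added, the argument is complete.
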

\begin{proof}
Assume that $\mathcal{B} \neq \{ 0, 1, \cdots, p-1 \} \pmod{p}$. Since $\#B=p$, there exist $i \neq j \in \{0, 1, \cdots, p-1\}$ such that
$c_2 d_{i,1} + c_1 d_{i,2} \equiv c_2 d_{j,1} + c_1 d_{j,2} \pmod{p}.$
This implies that
$c_2 ( d_{i,1} - d_{j,1}) + c_1 ( d_{i,2} - d_{j,2} ) \equiv 0 \pmod{p}.$
By \( c'' \in\mathbb{Q}\backslash  E_{\bm{a}} \), we have $c_{2}a_{2}-c_{1}a_{1} \in p \mathbb Z$.
Since $p$ is prime, $\gcd(c_1, c_2) = 1$ and $1\leq a_{i}\leq p-1$, we have $\gcd(p, a_i) =\gcd(p, c_i) = 1$ and
	\[
	a_1 \left( d_{i,1} - d_{j,1} \right) +  a_2 \left( d_{i,2} - d_{j,2} \right) \equiv 0\pmod{p}.
	\]
By \(\mathcal{Z}\left(m_{D}\right) = \cup_{j=1}^{p-1} ( \frac{j \bm{a}}{p} + \mathbb{Z}^2 ) \), we obtain that $\sum_{k=0}^{p-1}e^{-2\pi i \langle \frac{m\bm{a}}{p}, d_{k}\rangle} =0$ for any $m=1,2,\cdots,p-1$.
Hence
\begin{align}\label{eq2.6}
  \sum_{m=1}^{p-1}\sum_{k=0}^{p-1}e^{-2\pi i \frac{m}{p}\langle \bm{a}, d_{k}-d_{j}\rangle}  =\sum_{m=1}^{p-1}\sum_{k=0}^{p-1}e^{-2\pi i \frac{m}{p}(a_1 \left( d_{k,1} - d_{j,1} \right) +  a_2 \left( d_{k,2} - d_{j,2} \right))} &=0, \notag \\
   i.e.,  \sum_{m=0}^{p-1}\sum_{k=0}^{p-1}e^{-2\pi i \frac{m}{p}(a_1 \left( d_{k,1} - d_{j,1} \right) +  a_2 \left( d_{k,2} - d_{j,2} \right))} =\sum_{k=0}^{p-1}\sum_{m=0}^{p-1}e^{-2\pi i \frac{m}{p}(a_1 \left( d_{k,1} - d_{j,1} \right) +  a_2 \left( d_{k,2} - d_{j,2} \right))} & =p,\notag \\
   2p+\sum_{k=0(k\neq i,j)}^{p-1}\sum_{m=0}^{p-1}e^{-2\pi i \frac{m}{p}(a_1 \left( d_{k,1} - d_{j,1} \right) +  a_2 \left( d_{k,2} - d_{j,2} \right))} & =p.
\end{align}
By $\gcd(p,a_{i})=1$ and $p$ is prime, it is easy to see
 \[
	\{m(a_1 \left( d_{k,1} - d_{j,1} \right) +  a_2 \left( d_{k,2} - d_{j,2} \right)), m=0,1,\cdots, p-1 \} = \{0\} \ or \ \{0,1,\cdots,p-1\} \pmod{p}.
	\]
Then for $k\neq i,j$, $\sum_{m=0}^{p-1}e^{-2\pi i \frac{m}{p}(a_1 \left( d_{k,1} - d_{j,1} \right) +  a_2 \left( d_{k,2} - d_{j,2} \right))}=0$ or $p$, while the right-hand side of \eqref{eq2.6} is equal to \(p\), resulting in a contradiction. Hence, 	$ \mathcal{B} = \{0, 1, \cdots, p-1\} \pmod{p}$.

By the fundamental theorem of arithmetic, let \( \gcd(\mathcal{B}) = h \), where \( h \) is a prime number. Now, we prove that $h=1$.
Clearly, \( h \neq p \). Then, we may assume that \( c_2 d_{i,1} + c_1 d_{i,2} = ht_i \). From \( \mathcal{B} = \{0, 1, \cdots, p-1\} \pmod{p} \), we obtain \( \{t_i\}_{i=0}^{p-1} = \{0, 1, \cdots, p-1\} \pmod{p} \). Furthermore,
	\[
	\sum_{k=0}^{p-1} e^{-2\pi i p^{-1} \langle h^{-1}(c_2, c_1)^*, d_k \rangle} =	\sum_{k=0}^{p-1} e^{-2\pi i p^{-1} t_{k}}= 0,
	\]
which implies that \( \frac{(c_2, c_1)^{*}}{ph} \in \mathcal{Z}(m_D) = \cup_{j=1}^{p-1} ( \frac{j \bm{a}}{p} + \mathbb{Z}^2 ) \). Therefore, \( (c_2, c_1)^* \in h\mathbb{Z}^2 \). Combining with \( \gcd(c_1, c_2) = 1 \), we have \( h = 1 \).
\end{proof}

\section{Necessary conditions}

In this section, we first establish that if $\mu_{M, D}$ is a spectral measure, then $\rho_{1}$ can be expressed as $\frac{s}{t}$ for some $s, t \in \mathbb{N}$ with $\gcd(s, t) = 1$ and $p \mid t$. Subsequently, under the assumptions that $\mu_{M, D}$ satisfies conditions \eqref{1.1}, \eqref{1.2}, \eqref{x1.3} and \eqref{eq1.3} with $\rho_{1} \neq \rho_{2}$, we further prove that if $\mu_{M, D}$ is a spectral measure, then $\rho_{1}^{-1} \in p\mathbb{Z}$ and $c'' \in \mathbb{Q}$ (Theorem \ref{thm1.2}). Prior to delving into the proofs, we introduce the concept of the selection map and present several lemmas that will be instrumental in the subsequent arguments of this section.

Denote $\Xi=\{0,1,\cdots,p-1\}$, $\Xi^{k}=\{\textbf{I} = i_{1}i_{2}\cdots i_{k}:\;i_{k}\in \Xi\}$ for $k \ge 1$ and $\Xi^{*}=\cup_{k=0}^{\infty}\Xi^{k} $, where $\Xi^{0}=\{\emptyset\}$. Notice that $\Xi^{*}$ can be thought of as a tree: the root is the empty set $\emptyset$, and for each node $\textbf{I}$, the nodes following it are $\textbf{I}\Xi $. In particular,
$\emptyset \textbf{I} = \textbf{I}$, $\textbf{I}0^{\infty} = \textbf{I}00 \cdots$, $ \; 0^k = 0 \cdots 0 \in \Xi^{k}.
$
\begin{defn}[\cite{DHL14}]\label{defn1.3} If \( p \mid t \), we call a map \( \gamma : \Xi^* \to \{ -1, 0, \cdots, t - 2 \} \) a selection mapping if
	\begin{enumerate}
		\item[$(i)$] \( \gamma(\emptyset) = \gamma(0^n) = 0 \) for all \( n \geq 1 \).
		\item[$(ii)$] For any \( \mathbf{I} = i_1 \cdots i_k \in \Xi^k \), \( \gamma(\mathbf{I}) \in i_k + p\mathbb{Z} \cap \mathcal{C} \), where \( \mathcal{C} = \{-1, 0, \cdots, t - 2\} \).
		\item[$(iii)$]  For any \( \mathbf{I} = i_1 \cdots i_k \in \Xi^* \), there exists \( \mathbf{J} \in \Xi^* \) such that \( \gamma \) vanishes eventually on \( \mathbf{IJ} 0^\infty \), i.e., \( \gamma(\mathbf{IJ}0^k) = 0 \) for sufficiently large \( k \).
	\end{enumerate}
\end{defn}
Let
\begin{equation*}
	\Xi^{\gamma} =\left\{ \textbf{I} = i_1 \cdots i_k \in \Xi^* : i_k \neq 0, \ \gamma(\textbf{I}0^n) = 0 \text{ for sufficiently large } n \right\} \cup \{\emptyset\},
\end{equation*}
where $\textbf{I}|_{k}$ denotes the prefix of $\textbf{I}$ with length $k$. Furthermore, we can define a mapping $\gamma^{*}$ from $\Xi^{\gamma} $ to $\mathbb{Z}$ by
\begin{equation*}
	\gamma^{*}(\textbf{I})=\sum_{k=1}^{\infty}t^{k-1}\gamma(\textbf{I}|_{k}),\quad\text{for any $\textbf{I}\in\Xi^{\gamma}$}.
\end{equation*}
\begin{lemma}[{\cite{D2}}]\label{lemma3.1}
	Let \( (\Lambda - \Lambda) \backslash \{ 0 \} \subset  \{ \rho^{-k}\ell : k \geq 1, \ell \in \cup_{i=1}^{p-1}(\frac{i}{p} +\mathbb{Z})\} \) for a real number \( 0 < \rho < 1 \). Then, \( \Lambda \) is an infinite set if and only if \( \rho \) is the \( r \)-th root of a fraction \( \frac{s}{t} \), where \( \gcd(s, t) = 1 \), $p\mid t$.
\end{lemma}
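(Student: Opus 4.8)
The plan is to prove both implications after rewriting the hypothesis arithmetically. Put \(L=\bigcup_{i=1}^{p-1}(\tfrac{i}{p}+\mathbb Z)=\{m/p:m\in\mathbb Z,\ p\nmid m\}\) and \(S=\{\rho^{-k}\ell:k\ge 1,\ \ell\in L\}=\{\rho^{-k}\tfrac{m}{p}:k\ge 1,\ p\nmid m\}\), so that the assumption reads exactly \((\Lambda-\Lambda)\setminus\{0\}\subset S\). After a translation I may assume \(\bm 0\in\Lambda\), so each nonzero \(\lambda\in\Lambda\) has the shape \(\lambda=\rho^{-k_\lambda}m_\lambda/p\) with \(p\nmid m_\lambda\); the fact that \(\bm 0\notin S\) (every element of \(S\) has a nonzero numerator) will be used throughout. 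The sufficiency direction will be an explicit construction, while necessity splits into (a) extracting a rational power of \(\rho\) and (b) forcing \(p\mid t\).

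For necessity I would first produce a relation showing \(\rho^{r}\in\mathbb Q\). The clean mechanism is a \emph{collapse}: if two elements share the same power \(\rho^{-k}\) and have congruent numerators modulo \(p\), then their difference equals \(\rho^{-k}n\) with \(n=(m-m')/p\in\mathbb Z\setminus\{0\}\); imposing \(\rho^{-k}n\in S\), say \(\rho^{-k}n=\rho^{-k'}m'/p\) with \(p\nmid m'\), gives \(\rho^{\,k'-k}=m'/(pn)\in\mathbb Q\setminus\{0\}\), and \(k'=k\) is impossible (it would force \(p\mid m'\)), so \(\rho^{r}\in\mathbb Q\) with \(r=|k-k'|\). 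Since \(\Lambda\) is infinite, a pigeonhole over the residues \(m_\lambda\bmod p\) always yields infinitely many elements with a common numerator residue; if among them two share a power the collapse applies, and otherwise any three elements with non-constant powers satisfy a genuine three-term integer relation \(\rho^{-b}m_2-\rho^{-a}m_1=\rho^{-c}m_3\), so \(\rho^{-1}\) is algebraic and I would upgrade this to a rational power by the collapse after regrouping. Once \(\rho^{r}\in\mathbb Q\), writing \(\rho^{-r}=t/s\) in lowest terms and using that \(1,\rho^{-1},\dots,\rho^{-(r-1)}\) are \(\mathbb Q\)-linearly independent, each element of \(S\) is a rational multiple of a single \(\rho^{-e}\); a difference of two elements on distinct \(\rho^{-e}\)-lines would have two-term support in this basis and cannot lie in \(S\), so all of \(\Lambda-\bm 0\) lies on one line \(\mathbb Q\rho^{-e}\). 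Rescaling by \(\rho^{e}\) reduces everything to an infinite \(\Lambda'\subset\mathbb Q\) whose nonzero differences lie in \(S'=\{(t/s)^q\,m/p:q\ge 0,\ p\nmid m\}\).

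It remains to show \(p\mid t\), which I expect to be the main obstacle. Cross-multiplying \((s/t)^{w}=m'/(pn)\) from the collapse gives the valuation identity \(1+v_p(n)=w\,(v_p(t)-v_p(s))\); since the left side is positive and \(\gcd(s,t)=1\), the case \(p\nmid st\) is immediately excluded, leaving \(p\mid s\) or \(p\mid t\). Ruling out \(p\mid s\) is the delicate point: there every nonzero difference of \(\Lambda'\) has \(v_p\le -1\), and membership in \(S'\) is obstructed by the integrality of the stray cross-term \(-s^{\,q_1-q_2}\) appearing in \(\tfrac1p\bigl((t/s)^{q_1}-(t/s)^{q_2}\bigr)\). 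I would turn this into a contradiction for an \emph{infinite} \(\Lambda'\) by a valuation-and-counting argument showing that a sufficiently large orthogonal family is forced to contain a pair whose difference has nonnegative \(p\)-adic valuation and hence lies outside \(S'\). This is precisely the finiteness phenomenon quantified in Theorem \ref{thm1.8}, and it yields \(p\mid t\), i.e.\ \(\rho^{-1}=(t/s)^{1/r}\) with \(p\mid t\).

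For sufficiency, assume \(\rho^{-r}=t/s\) with \(\gcd(s,t)=1\) and \(p\mid t\), and write \(t=p^{\ell}t_0\), \(p\nmid t_0\), so that \(p\nmid s\). The bookkeeping rests on the identity \(\rho^{-r}\tfrac{s}{p}=\tfrac{t}{p}=p^{\ell-1}t_0\in\mathbb Z\), and more generally \(\tfrac{t^{k}}{p}=\rho^{-rk}\tfrac{s^{k}}{p}\in S\) because \(p\nmid s^{k}\). To produce infinitely many mutually orthogonal exponentials I would invoke the selection-map machinery of Definition \ref{defn1.3}, available precisely because \(p\mid t\): a selection map \(\gamma\) exists, the associated set \(\{\gamma^{*}(\mathbf I):\mathbf I\in\Xi^{\gamma}\}\) is an infinite family of integers whose base-\(t\) digits lie in prescribed nonzero residues modulo \(p\), and scaling this family by \(\rho^{-r}/p\) keeps all pairwise differences inside \(S\) by the identity above together with \(p\nmid s\). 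Equivalently, one checks that the corresponding finite stages form compatible pairs in the sense of Definition \ref{de2.3}, so their union is an infinite bi-zero set of \(\mu\). The only nontrivial step here is the orthogonality verification, which is exactly the \(p\mid t\) bookkeeping just recorded.
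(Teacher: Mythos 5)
The paper does not prove this lemma (it is quoted from \cite{D2}), so your attempt stands on its own. Its architecture --- a ``collapse'' producing a rational power of $\rho$, reduction to a single line $\mathbb{Q}\rho^{-e}$ via the linear independence of $1,\rho^{-1},\dots,\rho^{-(r-1)}$, a $p$-adic valuation analysis to decide between $p\mid s$ and $p\mid t$, and an explicit construction for sufficiency --- is the right shape, and the sufficiency half is essentially complete once you record $\tfrac{t^{k}}{p}=\rho^{-rk}\tfrac{s^{k}}{p}$ with $p\nmid s^{k}$ (the selection-map and compatible-pair machinery is unnecessary: $\Lambda=\{0\}\cup\{t^{k}/p:k\ge 1\}$ already works, since $t^{k_1}-t^{k_2}\equiv -t^{k_2}\cdot 1\not\equiv 0\pmod{p}$ after factoring). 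The first genuine gap is in necessity, at the step ``so $\rho^{-1}$ is algebraic and I would upgrade this to a rational power by the collapse after regrouping.'' Your collapse only fires when a trinomial relation $m_2x^{b}-m_1x^{a}=m_3x^{c}$ (with $x=\rho^{-1}$) has two equal exponents; in the remaining case all three exponents are distinct, and this only shows $x$ is a root of a trinomial. An algebraic number such as $x=1+\sqrt{2}$ satisfies $x^{3}-2x^{2}=x$, a relation of exactly the required shape with three distinct exponents $\ge 1$, yet no power of $x$ is rational. Ruling out such $x$ is the actual content of the necessity direction, and ``regrouping'' does not say what is regrouped or why a two-equal-exponent relation must eventually occur among infinitely many elements. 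This step is missing, not merely compressed.

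The second gap is the exclusion of $p\mid s$. The mechanism you propose --- that ``a sufficiently large orthogonal family is forced to contain a pair whose difference has nonnegative $p$-adic valuation'' --- is false: Theorem~\ref{thm1.8}(ii)(b) of this very paper constructs, when $s\in p\mathbb{Z}$, arbitrarily large \emph{finite} families all of whose pairwise differences remain in the zero set, so no counting argument over finite families can succeed. The exclusion must use infinitude itself. Concretely, after the reduction write each nonzero $\lambda$ as $(t/s)^{q_\lambda}m_\lambda/p$ with $p\nmid m_\lambda$ and $q_\lambda$ bounded below by some floor $q_{\min}$. If the exponents are unbounded, then comparing a fixed $\lambda$ against elements of arbitrarily larger exponent $q'$ and using $\gcd(s,t)=1$ forces $t^{\,q'-q_\lambda}\mid m_\lambda$ for all such $q'$, hence $m_\lambda=0$, a contradiction; if they are bounded, infinitely many elements share an exponent $q$, and a pigeonhole modulo $p^{(q-q_{\min})v_p(s)+1}$ produces a pair whose difference would need an exponent strictly below $q_{\min}$. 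Neither argument appears in your write-up, and the appeal to Theorem~\ref{thm1.8} (a two-dimensional statement proved later by different means) cannot substitute for it. Both gaps are at precisely the points where the lemma is hard, so the proposal as written does not establish the necessity direction.
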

Dai $et\;al.$ \cite{DHL14} obtained the following result, which illustrates the relationship between maximal orthogonal sets and selection maps.
\begin{lemma}{\rm{(\cite[Theorem 4.5]{DHL14} \label{lemma3.3})}}
	Suppose that \( \rho = \frac{s}{t} \in (0, 1) \) with \( p \mid t \), and
	\[
	(\Lambda - \Lambda) \backslash \{ 0 \} \subset \bigcup_{j=1}^{\infty} \rho^{-j} \left( \bigcup_{i=1}^{p-1}\left(\frac{i}{p} +\mathbb{Z}\right) \right)
	\]
	with \( 0 \in \Lambda \). Then \( \Lambda \) is maximal if and only if there exists \( j_0 \geq 1 \) and a selection map \( \gamma \) such that
	$\Lambda = \rho^{-j_0} p^{-1} (\gamma^* (\Xi^\gamma)).$
\end{lemma}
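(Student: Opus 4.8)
The plan is to pass from the real set $\Lambda$ to a set of integers via a single common rescaling, and then to read off the branching structure of $\Lambda$ as a labelled tree, matching it against $\Xi^*$ and the digit set $\mathcal{C} = \{-1,0,\ldots,t-2\}$. Throughout I write $\rho^{-1} = t/s$ with $\gcd(s,t)=1$ and $p \mid t$, so that $\gcd(s,p)=1$. The first technical ingredient I would isolate is a representation lemma: since $\mathcal{C}$ is a complete residue system modulo $t$ containing $0$ and a negative digit, every integer $n$ has a unique finite expansion $n = \sum_{k\ge 1} t^{k-1} c_k$ with $c_k \in \mathcal{C}$; moreover, because $p \mid t$, one has $n \equiv c_1 \pmod p$, and more generally the least index $k_0$ with $c_{k_0}\not\equiv 0 \pmod p$ controls the exact power of $\rho^{-1}$ appearing when $n/p$ is rewritten in admissible form.

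For the sufficiency direction, given $j_0 \ge 1$ and a selection map $\gamma$, set $\Lambda = \rho^{-j_0}p^{-1}\gamma^*(\Xi^\gamma)$. For $\textbf{I}\neq\textbf{J}$ in $\Xi^\gamma$ let $k_0$ be the first index where the two words differ; condition (ii) of the selection map gives $\gamma(\textbf{I}|_{k_0})-\gamma(\textbf{J}|_{k_0})\equiv i_{k_0}-j_{k_0}\not\equiv 0\pmod p$, and since $p\mid t$ every higher-order term is $\equiv 0\pmod p$, so $\gamma^*(\textbf{I})-\gamma^*(\textbf{J})=t^{k_0-1}m$ with $p\nmid m$. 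Hence $\lambda-\lambda'=\rho^{-(j_0+k_0-1)}\,\frac{s^{k_0-1}m}{p}$ and $p\nmid s^{k_0-1}m$, which lands exactly in $\bigcup_{j\ge 1}\rho^{-j}\bigl(\bigcup_{i=1}^{p-1}(\frac{i}{p}+\mathbb{Z})\bigr)$, so $\Lambda$ is orthogonal. For maximality I would argue that since $\Xi=\{0,\ldots,p-1\}$ exhausts all residues at each node, any candidate $\nu$ keeping $(\Lambda\cup\{\nu\}-\Lambda\cup\{\nu\})\setminus\{0\}$ admissible can be assigned, via the representation lemma applied to $\rho^{j_0}p\,\nu$, a word in $\Xi^*$; orthogonality against all of $\Lambda$ forces this word to follow a full branch of the tree, and condition (iii) then forces $\nu\in\Lambda$.

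For the necessity direction, let $\Lambda$ be maximal with $0\in\Lambda$. A singleton is never maximal here, so $\Lambda$ contains a nonzero element, and the arithmetic of $\rho$ together with Lemma \ref{lemma3.1} and maximality force $\Lambda$ to be infinite. Writing each nonzero $\lambda=\rho^{-j(\lambda)}m(\lambda)/p$ in lowest terms with $p\nmid m(\lambda)$, I would first produce a uniform $j_0$ with $\rho^{j_0}p\,\Lambda\subset\mathbb{Z}$: admissibility of all pairwise differences forces the $s$-adic valuations of the numerators to be governed by the number of leading zeros, so a single base level $j_0$ (read off from a minimal-level element) suffices. After rescaling, $\rho^{j_0}p\,\Lambda$ is a set of integers; expanding each in the digit system $\mathcal{C}$ and recording the residues mod $p$ of the successive digits labels a subtree of $\Xi^*$, which defines $\gamma$. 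Orthogonality guarantees that distinct elements branch (property (ii) and that all residues occur), finiteness of each expansion gives the eventual vanishing (iii), and maximality of $\Lambda$ upgrades the subtree to all of $\Xi^\gamma$, yielding $\Lambda=\rho^{-j_0}p^{-1}\gamma^*(\Xi^\gamma)$.

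The main obstacle I anticipate is the necessity direction, specifically extracting a single scale $j_0$ valid for the whole infinite set and then verifying that a maximal $\Lambda$ can only contain finitely supported expansions (property (iii)). Because $j(\lambda)$ is genuinely unbounded on an infinite $\Lambda$, integrality after rescaling is not automatic and must be forced from admissibility of differences; this is exactly where $\gcd(s,p)=1$ and the precise shape of $\mathcal{C}$ do the real work. A secondary subtlety is that $t$ need not be prime, so uniqueness of the $\mathcal{C}$-expansion and the valuation bookkeeping must be handled through the digit-system structure rather than by prime factorization of $t$.
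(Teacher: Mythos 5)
First, note that the paper does not prove this lemma at all: it is quoted verbatim as \cite[Theorem 4.5]{DHL14}, so there is no in-paper argument to compare against. Your outline does follow the same strategy as the original proof in \cite{DHL14} (rescale to integers, expand in base $t$ with digit set $\mathcal{C}=\{-1,0,\dots,t-2\}$, and read the tree structure off the residues mod $p$), and the pieces you carry out are correct: the representation lemma for $\mathcal{C}$-expansions holds, and the orthogonality computation $\lambda-\lambda'=\rho^{-(j_0+k_0-1)}s^{k_0-1}m/p$ with $p\nmid s^{k_0-1}m$ is exactly right.

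However, two steps are asserted rather than proved, and both hide the real arithmetic. (1) In the maximality half of sufficiency you say that orthogonality of a candidate $\nu$ against all of $\Lambda$ ``forces this word to follow a full branch of the tree''; but a word in $\Xi^*$ only records residues mod $p$, while $\gamma(\mathbf{I})$ is one of $t/p$ admissible digits in the class $i_k+p\mathbb{Z}$. A $\nu$ whose residue word follows a branch of $\Xi^\gamma$ but whose actual $\mathcal{C}$-digit first disagrees with $\gamma$ at level $j^\star$ produces a difference of the form $\rho^{-j_0}t^{j^\star-1}M/p$ with $p\mid M$ and $M\neq 0$, and one must show this can never be rewritten as $\rho^{-j'}n'/p$ with $p\nmid n'$; that is an identity in powers of $t/s$ of exactly the type handled by Lemma \ref{lemma4.3} and \cite[Lemma 2.5]{D2}, and it is where $\gcd(s,t)=1$ does the work. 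Your sketch does not address it. (2) In the necessity direction, the extraction of a single scale $j_0$ with $\rho^{j_0}p\Lambda\subset\mathbb{Z}$ (equivalently, that an element at level $j$ must have numerator divisible by $s^{j-j_0}$) is the technical heart of the cited theorem; you correctly flag it as the main obstacle but leave it entirely unproved, and you also conflate condition (iii) of the selection map (every node of $\Xi^*$ has a descendant on which $\gamma$ eventually vanishes, which is precisely what maximality buys) with the mere finiteness of each individual expansion. The approach is the right one, but the proposal is incomplete at exactly the points where the result is nontrivial.
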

For convenience, for any set \( \Lambda \subset \mathbb{R}^2 \), we define
\[
\Lambda^{(i)} = \left\{ \lambda_{k,i} : \lambda_{k} =\left(\lambda_{k,1}, \lambda_{k,2}\right)^* \in \Lambda \right\},
\]
where \( i = 1, 2 \). 
Similar to \cite[Proposition 3.3]{LDL}, we give the structure of the spectrum.
\begin{pro}\label{pro 3.4}
	Let \(\nu = \delta_1 * \delta_2 * \cdots * \delta_n * \cdots\) be a spectral measure satisfying \(\# \operatorname{spt} \nu = \infty\) and
	\[
	\emptyset \neq \mathcal{Z}(\widehat{\delta}_j) \subset \left\{
	\begin{pmatrix}
		\eta\rho^{-j}\left(\frac{i}{p} + k\right) \\
		x
	\end{pmatrix}
	: i\in \{1,2,\cdots,p-1\}, k \in \mathbb{Z}, x \in \mathbb{R}
	\right\} := S_j(\eta,\rho) \quad \text{for all } j \geq 1,
	\]
	where \(0 < \rho < 1\) and \(\eta \neq 0\). If \(\bm{0} \in \Lambda\) is a spectrum of \(\nu\), then \(\rho = \frac{s}{t}\) with \(t \in p\mathbb{Z}\), and there exist a selection map \(\gamma\) and a subset \(\Xi' \subset \Xi^\gamma\) such that
	\[
	\Lambda^{(1)} = \eta\rho^{-1} p^{-1} (\gamma^* (\Xi')) \subset \eta\rho^{-1} p^{-1}(\gamma^*(\Xi^\gamma)).
	\]
	Moreover, the spectrum \(\Lambda\) can be decomposed as
	$$
	\Lambda = \cup_{n=1}^\infty \Lambda_n=\cup_{n=1}^\infty \{\lambda(\mathbf{I}) : |\mathbf{I}| \leq n, \mathbf{I} \in \Xi'\},
	$$
	where \(\Lambda_n := \{\lambda(\mathbf{I}) : |\mathbf{I}| \leq n, \mathbf{I} \in \Xi'\}\) is a bi-zero set of \(\nu_n= \delta_1 * \delta_2 * \cdots * \delta_n\).
\end{pro}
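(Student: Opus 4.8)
\emph{Proof proposal.} The plan is to collapse the two-dimensional problem onto its first coordinate and then feed the resulting one-dimensional difference set into the selection-map machinery of Lemmas \ref{lemma3.1} and \ref{lemma3.3}. Since $\bm0\in\Lambda$ is a spectrum of $\nu$, the family $E_\Lambda$ is an orthonormal basis of $L^2(\nu)$, so in particular $(\Lambda-\Lambda)\setminus\{\bm0\}\subset\mathcal{Z}(\widehat{\nu})=\bigcup_{j\geq1}\mathcal{Z}(\widehat{\delta}_j)\subset\bigcup_{j\geq1}S_j(\eta,\rho)$, because $\widehat{\nu}=\prod_{j\geq1}\widehat{\delta}_j$. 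The decisive structural observation is that every element of $S_j(\eta,\rho)$ has first coordinate $\eta\rho^{-j}(\frac{i}{p}+k)$ with $i\in\{1,\dots,p-1\}$, and this is \emph{never} zero since $\frac{i}{p}+k\neq0$. Hence distinct points of $\Lambda$ have distinct first coordinates, the projection $\lambda\mapsto\lambda_1$ is injective on $\Lambda$, and $\Lambda^{(1)}$ is infinite (as $\#\operatorname{spt}\nu=\infty$ forces $\#\Lambda=\infty$).

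Next I would study $\Gamma:=\eta^{-1}\Lambda^{(1)}$. The inclusion above gives $0\in\Gamma$, $\#\Gamma=\infty$, and $(\Gamma-\Gamma)\setminus\{0\}\subset\bigcup_{j\geq1}\rho^{-j}\big(\bigcup_{i=1}^{p-1}(\frac{i}{p}+\mathbb{Z})\big)$. By Lemma \ref{lemma3.1}, the infinitude of $\Gamma$ forces $\rho$ to be (a root of) a fraction with $p$ dividing the denominator; since the selection-map description below requires $\rho$ rational, we record $\rho=\frac{s}{t}$ with $\gcd(s,t)=1$ and $t\in p\mathbb{Z}$. Every orthogonal set extends to a maximal one with the same difference structure, and by Lemma \ref{lemma3.3} such a maximal set equals $\rho^{-j_0}p^{-1}\gamma^*(\Xi^\gamma)$ for some $j_0\geq1$ and some selection map $\gamma$; thus $\Gamma\subset\rho^{-j_0}p^{-1}\gamma^*(\Xi^\gamma)$. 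Because the finest scale occurring in the difference set is $\rho^{-1}$ (it already appears in $\mathcal{Z}(\widehat{\delta}_1)$), one may take $j_0=1$. Setting $\Xi':=(\gamma^*)^{-1}(p\rho\,\Gamma)\cap\Xi^\gamma$ then yields $\Lambda^{(1)}=\eta\rho^{-1}p^{-1}\gamma^*(\Xi')\subset\eta\rho^{-1}p^{-1}\gamma^*(\Xi^\gamma)$, which is the first assertion.

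For the decomposition I would use injectivity of the first-coordinate projection to define, for each $\mathbf{I}\in\Xi'$, the unique $\lambda(\mathbf{I})\in\Lambda$ with first coordinate $\eta\rho^{-1}p^{-1}\gamma^*(\mathbf{I})$, and put $\Lambda_n:=\{\lambda(\mathbf{I}):|\mathbf{I}|\leq n,\ \mathbf{I}\in\Xi'\}$, so that $\Lambda=\bigcup_{n\geq1}\Lambda_n$ by construction. It then remains to check that each $\Lambda_n$ is a bi-zero set of $\nu_n=\delta_1*\cdots*\delta_n$, i.e. $(\Lambda_n-\Lambda_n)\setminus\{\bm0\}\subset\mathcal{Z}(\widehat{\nu_n})=\bigcup_{j=1}^n\mathcal{Z}(\widehat{\delta}_j)$. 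For a difference $\lambda(\mathbf{I})-\lambda(\mathbf{J})$ the first coordinate is $\eta\rho^{-1}p^{-1}\big(\gamma^*(\mathbf{I})-\gamma^*(\mathbf{J})\big)$, and one translates the scale $j$ at which it lands in $S_j(\eta,\rho)$ into the $t$-adic valuation of $\gamma^*(\mathbf{I})-\gamma^*(\mathbf{J})$: using $\gamma^*(\mathbf{I})=\sum_{k\geq1}t^{k-1}\gamma(\mathbf{I}|_k)$ together with $\rho=\frac{s}{t}$, words of length $\leq n$ produce first coordinates lying in $\bigcup_{j=1}^n S_j(\eta,\rho)$, whence the full difference lies in $\mathcal{Z}(\widehat{\nu_n})$.

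The step I expect to be the main obstacle is precisely this last matching: aligning the combinatorial length $|\mathbf{I}|\leq n$ in the tree $\Xi'$ with the analytic truncation level $n$ of the convolution $\nu_n$, through the interplay between the $t$-ary expansion defining $\gamma^*$ and the powers of $\rho=\frac{s}{t}$ that index the scales $S_j$. Carrying out this valuation bookkeeping correctly (and confirming that the base scale is $j_0=1$ rather than some larger power) is the delicate part; the injective projection and the two cited lemmas dispatch the remaining assertions essentially routinely.
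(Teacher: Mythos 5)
Your overall route (project onto the first coordinate, which is injective on $\Lambda$ because every element of $S_j(\eta,\rho)$ has nonzero first entry; then invoke Lemmas \ref{lemma3.1} and \ref{lemma3.3} and do the $t$-adic bookkeeping for the bi-zero property of $\Lambda_n$) is the same as the paper's, and those parts are sound. But there is a genuine gap: you never use Lemma \ref{lemma 2.2}, which the paper explicitly lists as one of its three ingredients, and the two places where you skip it are exactly where the spectral hypothesis (as opposed to mere orthogonality) must enter. First, Lemma \ref{lemma3.1} only yields $\rho=(\frac{s}{t})^{1/r}$ for some $r\ge 1$; your sentence ``since the selection-map description below requires $\rho$ rational, we record $\rho=\frac{s}{t}$'' is circular, and the desired conclusion is simply false for orthogonal sets alone --- Theorem \ref{thm1.7} exhibits infinite orthogonal families precisely when $\rho^{-1}=(\frac{t}{s})^{1/r}$ with $r$ possibly at least $2$. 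To rule out $r\ge 2$ one must argue: writing the first coordinate of each nonzero difference as $\eta\rho^{-j}\ell/p$ with $\ell\in\mathbb{Z}\setminus p\mathbb{Z}$, the relation $\ell_1\rho^{-j_1}-\ell_2\rho^{-j_2}=\ell_3\rho^{-j_3}$ together with Lemma \ref{lemma4.3}$(i)$ forces all scales $j$ occurring in $(\Lambda-\Lambda)\setminus\{\bm{0}\}$ into a single residue class $b \bmod r$; hence $\Lambda$ is a bi-zero set of $\nu_0=\delta_{j_1'}*\delta_{j_2'}*\cdots$ taken over $j\equiv b\pmod r$, and Lemma \ref{lemma 2.2} applied to the factorization $\nu=\nu_0*\nu_1$ contradicts $\Lambda$ being a spectrum unless $r=1$.

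Second, your justification that $j_0=1$ (``the finest scale already appears in $\mathcal{Z}(\widehat{\delta}_1)$'') confuses non-emptiness of $\mathcal{Z}(\widehat{\delta}_1)$ with $(\Lambda-\Lambda)$ actually meeting it; an orthogonal set could perfectly well have all of its differences at scales $j\ge 2$. Again Lemma \ref{lemma 2.2} closes this: if $(\Lambda-\Lambda)\cap\mathcal{Z}(\widehat{\delta}_1)=\emptyset$, then $\Lambda$ is a bi-zero set of $\delta_2*\delta_3*\cdots$ and hence cannot be a spectrum of $\nu=\delta_1*(\delta_2*\cdots)$, so scale $1$ must occur and the maximal set containing $\eta^{-1}\Lambda^{(1)}$ has $j_0=1$. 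With these two insertions the remainder of your argument (injectivity of $\gamma^*$ on $\Xi^\gamma$ making $\Xi'$ well defined, and the computation showing $\gamma^*(\mathbf{I})-\gamma^*(\mathbf{J})\in t^{m-1}(\mathbb{Z}\setminus p\mathbb{Z})$ with $m=\min\{k:\mathbf{I}|_k\neq\mathbf{J}|_k\}\le n$, so that the difference lies in $\mathcal{Z}(\widehat{\delta}_m)\subset\mathcal{Z}(\widehat{\nu_n})$) goes through as you describe.
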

\begin{proof}
By adapting the methodology used in the proof of Proposition 3.3 in \cite{LDL} and leveraging Lemmas \ref{lemma 2.2}, \ref{lemma3.1}, and \ref{lemma3.3}, the proof of this proposition follows straightforwardly.
\end{proof}
\begin{remark}	For any \(\lambda=(\lambda_{1},\lambda_{2})^* \in \Lambda\), by the above proposition, there exists a unique word \(\mathbf{I} \in \Xi'\) such that
	$
	\lambda_{1} = \eta \rho^{-1} p^{-1} \gamma^*(\mathbf{I}).
	$
	We can denote \(\lambda_{1} = \lambda_{1}(\mathbf{I})\) and \(\lambda = \lambda(\mathbf{I})\).
\end{remark}
It is straightforward to verify that the measure $\mu_{M,D}$, defined by conditions \eqref{1.1}, \eqref{1.2} and \eqref{x1.3}, satisfies the requirements of Proposition \ref{pro 3.4}. Thus, we conclude that if $\mu_{M,D}$ is a spectral measure, then $\rho_{1} = \frac{s}{t}$ with $t \in p\mathbb{Z}$. In the remainder of this section, we will assume that the digit set $D$ satisfies conditions \eqref{1.2}, \eqref{x1.3}and \eqref{eq1.3}. We first prove that if $\mu_{M, D}$ is a spectral measure, then $\rho_{1}\in p\mathbb{Z}$, followed by the proof of Theorem \ref{thm1.2}.

Let \( Q = \begin{bmatrix}
	d^{-1}_{1,1} & 0 \\
	0 & 1
\end{bmatrix} \). Then,
\begin{equation}\label{3.1}
	QMQ = \begin{bmatrix}
		\rho_1^{-1} & d^{-1}_{1,1} c \\
		0 & \rho_2^{-1}
	\end{bmatrix} := M'' \quad \text{and} \quad QD = \left\{
	\begin{pmatrix} 0 \\ 0 \end{pmatrix},
	\begin{pmatrix} 1 \\ 0 \end{pmatrix}, \cdots,
	\begin{pmatrix} \frac{d_{p-1,1}}{d_{1,1}} \\ d_{p-1,2} \end{pmatrix}
	\right\} := D''
\end{equation}
and $\mu_{M'', D''}$ shares the same spectral property as $\mu_{M, D}$. Write
\begin{equation}\label{3.3.1}
	\mu''_{>n}:=\delta_{{M''^{-(n+1)} D''}} *\delta_{{M''^{-(n+2)} D''}} * \cdots.
\end{equation}
It is readily verified that
\begin{equation*}
	\mathcal{Z}(\widehat{\delta}_{M''^{-j}D''})={{M''}^{*}}^j{Q^*}^{-1}\mathcal{Z}(m_{D}) \subset S_{j}(d_{1,1},\rho_{1}),
\end{equation*}
fulfilling the conditions of Proposition \ref{pro 3.4}. Consequently, we establish the following lemma.
\begin{pro}\label{pro 3.6}
	Let \( \rho_1 = \frac{s}{t} \) with \( s > 1 \) and \( t \in p\mathbb{Z} \). Suppose there exist \( \alpha, \beta > 0 \) such that for all \( n \geq 1 \) and \( \xi = (\xi_1, \xi_2)^* \in \mathbb{R}^2 \) with \( |\rho_1^n \xi_1| > 1 \),
	\[
	|\widehat{\mu''}_{>n}(\xi)| \leq \alpha \left( \ln \left| \rho_1^n \xi_1 \right| \right)^{-\beta}.
	\]
Then \( \mu_{M,D} \) is not a spectral measure.
\end{pro}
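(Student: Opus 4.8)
The plan is to argue by contradiction, combining the structural description in Proposition~\ref{pro 3.4} with the non-spectral criterion Lemma~\ref{lemma 2.7}$(ii)$. By \eqref{3.1} the measures $\mu_{M,D}$ and $\mu_{M'',D''}$ have the same spectrality, so it is enough to show that $\mu_{M'',D''}$ is not a spectral measure. Suppose instead that it is; a spectrum may be translated so that $\bm{0}$ lies in it, and since $\#\operatorname{spt}\mu_{M'',D''}=\infty$ and $\mathcal{Z}(\widehat{\delta}_{M''^{-j}D''})\subset S_{j}(d_{1,1},\rho_1)$, Proposition~\ref{pro 3.4} applies with $\eta=d_{1,1}$ and $\rho=\rho_1$. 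It yields a selection map $\gamma$ and a subset $\Xi'\subset\Xi^{\gamma}$ such that each $\lambda=\lambda(\mathbf{I})\in\Lambda$ is indexed by a word $\mathbf{I}\in\Xi'$ with $\lambda_{1}(\mathbf{I})=d_{1,1}\rho_1^{-1}p^{-1}\gamma^{*}(\mathbf{I})$, together with the decomposition $\Lambda=\cup_{n}\Lambda_{n}$, $\Lambda_{n}=\{\lambda(\mathbf{I}):|\mathbf{I}|\le n\}$, where each $\Lambda_n$ is a bi-zero set of the $n$-th partial convolution. I will derive a contradiction by verifying the hypothesis of Lemma~\ref{lemma 2.7}$(ii)$ for this $\Lambda$.

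The quantitative heart of the argument is a geometric lower bound for $|\gamma^{*}(\mathbf{I})|$ in terms of the word length $m=|\mathbf{I}|$. Writing $\gamma_{k}=\gamma(\mathbf{I}|_{k})\in\{-1,0,\dots,t-2\}$ and recalling that $i_{m}\neq 0$ forces $\gamma_{m}\not\equiv 0\pmod p$, hence $\gamma_{m}\neq 0$, the integer $\gamma^{*}(\mathbf{I})=\sum_{k=1}^{m}t^{k-1}\gamma_{k}$ has leading term $t^{m-1}\gamma_{m}$ of modulus at least $t^{m-1}$, while the lower-order terms are controlled by $\sum_{k=1}^{m-1}t^{k-1}|\gamma_{k}|\le (t-2)\tfrac{t^{m-1}-1}{t-1}$. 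A direct estimate (the case $t=2$, which occurs only when $p=2$ and then forces $\gamma_{m}=-1$, being handled separately) gives a constant $c_{0}=c_{0}(t)>0$ with $|\gamma^{*}(\mathbf{I})|\ge c_{0}t^{m}$ for every $\mathbf{I}\in\Xi^{\gamma}$ of length $m$. Consequently $|\lambda_{1}(\mathbf{I})|\ge \tfrac{|d_{1,1}|c_{0}}{p}\rho_1^{-1}t^{m}$, so points of $\Lambda$ at level $m$ have first coordinate growing geometrically in $m$; this is precisely the feature that will make the tail estimate summable.

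Finally I will feed this into Lemma~\ref{lemma 2.7}$(ii)$ along a suitably chosen subsequence, which is what lets the argument succeed for an \emph{arbitrary} decay exponent $\beta>0$. Fix $\xi=(\xi_{1},\xi_{2})^{*}$ with $|\xi_{i}|\le\tfrac13$, and set $\alpha_{n}=n^{k}$ where $k\in\mathbb{N}$ is chosen with $2\beta k>1$; then $\Lambda_{\alpha_{n+1}}\setminus\Lambda_{\alpha_{n}}$ consists of the $\lambda(\mathbf{I})$ with $\alpha_{n}<|\mathbf{I}|\le\alpha_{n+1}$. For such $\lambda$, the bound from the second paragraph gives $\ln|\rho_1^{\alpha_{n+1}}\lambda_{1}|\ge(\alpha_{n+1}-1)\ln s-(\alpha_{n+1}-|\mathbf{I}|)\ln t-C$ for a constant $C$; minimizing over the block at $|\mathbf{I}|=\alpha_{n}+1$ and using $\alpha_{n+1}-\alpha_{n}=o(\alpha_{n+1})$ shows $\ln|\rho_1^{\alpha_{n+1}}\lambda_{1}|\ge\tfrac12(\ln s)\,\alpha_{n+1}$ for all large $n$. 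Since $|\rho_1^{\alpha_{n+1}}\xi_{1}|\le \rho_1^{\alpha_{n+1}}/3\to0$, the standing hypothesis $|\widehat{\mu''}_{>\alpha_{n+1}}(\lambda+\xi)|\le\alpha(\ln|\rho_1^{\alpha_{n+1}}(\lambda_{1}+\xi_{1})|)^{-\beta}$ applies and yields $\sup_{\lambda\in\Lambda_{\alpha_{n+1}}\setminus\Lambda_{\alpha_{n}}}|\widehat{\mu''}_{>\alpha_{n+1}}(\lambda+\xi)|^{2}\le \alpha^{2}\big(\tfrac12(\ln s)\,n^{k}\big)^{-2\beta}=:\beta_{n}$, with $\sum_{n}\beta_{n}<\infty$ by the choice of $k$. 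Lemma~\ref{lemma 2.7}$(ii)$ then shows $\Lambda$ is not a spectrum, contradicting the assumption, so $\mu_{M,D}$ is not spectral. I expect the main obstacle to be the second paragraph: extracting the clean geometric bound $|\gamma^{*}(\mathbf{I})|\gtrsim t^{m}$ in the presence of the negative values of $\gamma$ and the attendant cancellation, and then coupling it with a subsequence $\alpha_{n}=n^{k}$ flexible enough to absorb a possibly tiny decay exponent $\beta$.
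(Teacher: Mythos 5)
Your proposal is correct and follows essentially the same route as the paper's own proof: argue by contradiction, invoke Proposition \ref{pro 3.4} to write the spectrum as $\Lambda=\cup_n\Lambda_n$ indexed by a selection map, establish the geometric lower bound $|\lambda_1(\mathbf{I})|\gtrsim t^{|\mathbf{I}|}$ (the paper's inequality \eqref{3.3}), and then run Lemma \ref{lemma 2.7}$(ii)$ along the subsequence $\alpha_n=n^K$ with $K$ large relative to $\beta^{-1}$ to make the tail bounds summable. The only differences are cosmetic (your choice $2\beta k>1$ versus the paper's $K>\beta^{-1}$, and a slightly more explicit treatment of the cancellation in $\gamma^*(\mathbf{I})$).
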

\begin{proof}
Suppose that \( \mu_{M,D} \) is a spectral measure and that \( \bm{0} \in \Lambda \) is a spectrum of \( \mu_{M'',D''} \).  From Proposition \ref{pro 3.4}, we know that there exists a selection map \( \gamma \) and a subset \( \Xi^{'} \subset \Xi^{\gamma} \) such that \( \Lambda^{(1)} = d_{1,1}\rho_{1}^{-1} p^{-1} (\gamma^* (\Xi^{'})) \subset d_{1,1}\rho_{1}^{-1} p^{-1} (\gamma^*(\Xi^{\gamma})) \). Additionally, \( \Lambda = \cup_{n=1}^{\infty} \Lambda_n \), where \( \Lambda_n := \{ \lambda(\textbf{I}) : |\textbf{I}| \leq n, \textbf{I} \in \Xi^{'} \} \).
	For any \( n \geq 1 \) and \( \lambda=(\lambda_{1},\lambda_{2})^* \in \Lambda \setminus \Lambda_n \), there exists \( \textbf{I} \in \Xi^{'} \) with \( |\textbf{I}| \geq n + 1 \) such that
	$ \lambda_{1} = \lambda_{1}(\textbf{I}) = d_{1,1}\rho_{1}^{-1} p^{-1} \gamma^{*}(\textbf{I}).$
	This implies that
	\begin{equation}\label{3.3}
		\left|\lambda_{1}(\textbf{I})\right| \geq \frac{|d_{1,1}|t}{ps} \left( t^n - (t - 2)t^{n-1} - \cdots - (t - 2) \right) \geq \frac{|d_{1,1}|(t^{n}+1)}{ps}.
	\end{equation}
	Let \( K > \beta^{-1} \) and \( \alpha_n = n^{K} \). Then, there exists a positive integer \( n_0 \) such that
	$ (n + 1)^{K} \leq \left( 1 + \frac{1}{2} \log_t^{s}  \right)n^{K} \;\text{for any } n \geq n_0.$
	For any \( \xi = (\xi_1, \xi_2)^* \in [ 0,| \frac{d_{1,1}}{p s} |]^2 \) and \( n^{K} < |\textbf{I}| \leq (n+1)^{K} \), it follows from \eqref{3.3} that
	\begin{align*}
		|\widehat{\mu''}_{>\alpha_{n+1}}(\xi+\lambda(\textbf{I}))| &\leq \alpha \left( \ln \rho_1^{(n+1)^{K}}\left|  \xi_1 +\lambda_{1}(\textbf{I})\right| \right)^{-\beta}\\
		&\leq \alpha \left( \ln \rho_1^{(n+1)^{K}}|d_{1,1}|\frac{t^{n^{K}}+1}{ps} \right)^{-\beta}\\
		&\le \alpha \left( [(n+1)^{K}\log_{t} \rho_{1} +n^{K}+\log_{t}|d_{1,1}| -\log_{t}ps]\ln t \right)^{-\beta}\\
		&\le \frac{C}{n}
	\end{align*}
	for some constants \( C, N_0 > 0 \), and for all \( n \geq N_0 \geq n_0 \). Since $ \sum_{n=1}^{\infty} \frac{C^2}{n^2} < \infty
	$ and by applying Lemma \ref{lemma 2.7} (ii), we conclude that \( \Lambda \) cannot be a spectrum of \( \mu_{M'',D''} \), which leads to a contradiction. This completes the proof that
	$\mu_{M, D}$ is not a spectral measure.
\end{proof}
 \begin{lemma}\label{lemma 3.7}
	Let \( \mu_{M,D} \) be given by \eqref{1.1}, \eqref{1.2}, \eqref{x1.3}and \eqref{eq1.3}. If $\mu_{M,D}$ is a spectral measure, then $\rho_{1}^{-1}\in p\mathbb{Z}$.
\end{lemma}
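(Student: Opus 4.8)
The plan is to reduce the lemma to a single arithmetic statement and then feed it into the machinery already prepared in this section. Since $\mu_{M,D}$ is assumed to be spectral, Proposition \ref{pro 3.4} applies to $\mu_{M'',D''}$ and yields $\rho_1=\frac{s}{t}$ with $\gcd(s,t)=1$ and $t\in p\mathbb{Z}$. As $\gcd(s,t)=1$, the number $\rho_1^{-1}=\frac{t}{s}$ is an integer---and then automatically lies in $p\mathbb{Z}$, because $t\in p\mathbb{Z}$---precisely when $s=1$. Hence the entire lemma amounts to showing $s=1$, and I argue by contradiction: suppose $s>1$. My goal is then to verify the decay hypothesis of Proposition \ref{pro 3.6}, which would force $\mu_{M,D}$ to be non-spectral and contradict the assumption.

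To produce the required estimate, recall from \eqref{3.3.1} that $\widehat{\mu''}_{>n}(\xi)=\prod_{k=n+1}^{\infty}\widehat{\delta}_{D''}\big((M''^{-k})^{*}\xi\big)$, and set $f(\eta):=|\widehat{\delta}_{D''}(\eta)|$ for $\eta=(\eta_1,\eta_2)^{*}$. Because $M''^{*}$ is lower triangular with diagonal $(\rho_1^{-1},\rho_2^{-1})$, each $(M''^{-k})^{*}$ is lower triangular with diagonal $(\rho_1^{k},\rho_2^{k})$; consequently the first coordinate of $(M''^{-k})^{*}\xi$ is exactly $\rho_1^{k}\xi_1$, while its second coordinate is some real $\eta_{2,k}$ depending on $\xi$. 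Writing $k=m+n$ gives
\[
|\widehat{\mu''}_{>n}(\xi)|=\prod_{m=1}^{\infty} f\big(\rho_1^{m}(\rho_1^{n}\xi_1),\,\eta_{2,m+n}\big),
\]
which exhibits the first variable in exactly the form demanded by Lemma \ref{lem2.7}.

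It remains to check that $f$ satisfies the two hypotheses of Lemma \ref{lem2.7}. Since $D''=QD$ with $Q=\diag(d_{1,1}^{-1},1)$, the digits of $D''$ have first coordinates in $d_{1,1}^{-1}\mathbb{Z}$ and integer second coordinates, so $f$ is $d_{1,1}$-periodic in $\eta_1$ and $1$-periodic in $\eta_2$, giving condition (i). For condition (ii), note that $\bm{0}\in D''$ and $(1,0)^{*}\in D''$; thus $f(\eta)=1$ forces every exponential $e^{-2\pi i\langle\eta,d\rangle}$ with $d\in D''$ to equal $1$, and in particular $e^{-2\pi i\eta_1}=1$, i.e.\ $\eta_1\in\mathbb{Z}$. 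Hence $\{f=1\}\subset\{(\ell,\eta_2)^{*}:\ell\in\mathbb{Z},\,\eta_2\in\mathbb{R}\}$, which is condition (ii) with $\gamma=1$. Applying Lemma \ref{lem2.7} with $\rho=\rho_1=\frac{s}{t}$ (here $s>1$, $\gcd(s,t)=1$), $x_1=\rho_1^{n}\xi_1$, and the sequence $x_{m,2}=\eta_{2,m+n}$, we obtain constants $c_0,\beta>0$, uniform in $n$ and in the sequence, such that $|\widehat{\mu''}_{>n}(\xi)|\le c_0(\ln|\rho_1^{n}\xi_1|)^{-\beta}$ whenever $|\rho_1^{n}\xi_1|>1$.

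This is exactly the hypothesis of Proposition \ref{pro 3.6} (with $\alpha=c_0$), so $\mu_{M,D}$ is not a spectral measure---contradicting our standing assumption. Therefore $s=1$ and $\rho_1^{-1}=t\in p\mathbb{Z}$. I expect the main obstacle to be the verification step for Lemma \ref{lem2.7}: one must confirm that the level set $\{f=1\}$ collapses onto a single coordinate line (so that the one-dimensional logarithmic decay can be extracted from the genuinely two-dimensional product), and that the lower-triangular structure of $(M''^{-k})^{*}$ cleanly decouples the first coordinate as $\rho_1^{k}\xi_1$ with constants independent of $n$, so that Proposition \ref{pro 3.6} can be invoked uniformly.
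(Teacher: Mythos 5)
Your argument is correct, and its skeleton is exactly the paper's: invoke Proposition \ref{pro 3.4} to get $\rho_1=\frac{s}{t}$ with $t\in p\mathbb{Z}$ and $\gcd(s,t)=1$, assume $s>1$ for contradiction, establish the logarithmic decay of $|\widehat{\mu''}_{>n}|$ in the first coordinate, and conclude via Proposition \ref{pro 3.6}. The only divergence is in how the decay estimate is obtained. The paper works with the one-dimensional Lemma \ref{lemma 2.8}: it writes $\widehat{\mu}_{M'',D''}(\xi)=\prod_j f_j(\rho_1^{j}\xi_1)$ with $f_n(\xi_1)=|m_{M''^{-n}D''}((\rho_1^{-n}\xi_1,\xi_2)^*)|$, uses the digits $\bm{0}$ and $(1,0)^*$ of $D''$ to bound $f_n(\xi_1)\le \frac{1}{p}|1+e^{-2\pi i\xi_1}|+\frac{p-2}{p}$, and checks the explicit pointwise bound $f_n\le b_0<1$ for $|\langle\xi_1\rangle|\ge\frac{1}{2t_1}$. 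You instead feed the two-variable mask $f=|\widehat{\delta}_{D''}|$ into Lemma \ref{lem2.7}, verifying its periodicity hypothesis with periods $(d_{1,1},1)$ and its level-set hypothesis $\{f=1\}\subset\mathbb{Z}\times\mathbb{R}$ from the same two digits; both verifications are sound (equality in the triangle inequality plus $\bm{0}\in D''$ forces $e^{-2\pi i\eta_1}=1$), the lower-triangular form of $(M''^{*})^{-k}$ does decouple the first coordinate as $\rho_1^{k}\xi_1$, and the constants from Lemma \ref{lem2.7} are uniform over the auxiliary sequence, hence over $n$ and $\xi$, as Proposition \ref{pro 3.6} requires. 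The two routes are interchangeable here; yours outsources the quantitative step entirely to the packaged Lemma \ref{lem2.7} at the cost of checking a level-set condition, while the paper's keeps an explicit one-variable bound and uses the simpler Lemma \ref{lemma 2.8}. No gap.
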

\begin{proof}
If $\mu_{M,D}$ is a spectral measure, $\mu_{M'',D''}$ is also a spectral measure. From Proposition \ref{pro 3.4}, it follows that \(\rho_{1}^{-1} = \frac{t_{1}}{s_{1}}\), where \(t_{1} \in p\mathbb{Z}\) and \(\gcd(s_{1}, t_{1}) = 1\). Therefore, we only need to prove that \( s_{1} = 1 \). By contradiction, assume that \( s_{1} > 1 \). Let \(\xi = (\xi_1, \xi_2)^*\), and define $f_{n}(\xi_{1}):=|m_{{M''}^{-n}D''}((\rho_{1}^{-n}\xi_{1},\xi_2)^{*})|$. Then, it follows that $\widehat{\mu}_{M'',D''}(\xi)=\Pi_{j=1}^{\infty}f_{j}(\rho_{1}^{j}\xi_{1}).$ Note that
	\[
	{M}''^{-1} = \begin{bmatrix} \rho_1 & -{d_{1,1}}^{-1}c \rho_1 \rho_2 \\ 0 & \rho_2 \end{bmatrix} \quad \text{and} \quad M''^{-2} = \begin{bmatrix} \rho_1^2 & -{d_{1,1}}^{-1}c \rho_1 \rho_2 (\rho_1 + \rho_2) \\ 0 & \rho_2^2 \end{bmatrix}.
	\]
	For convenience, we can denote
	\[
	M''^{-n} = \begin{bmatrix} \rho_1^n & \vartheta_n \\ 0 & \rho_2^n \end{bmatrix},
	\]
	where \( \vartheta_n \) is determined by \( \rho_{1} \), \( \rho_{2} \), and \( c{d_{1,1}}^{-1}
	\).
	Furthermore, we can estimate \( f_{n}(\xi_{1}) \) as
	\begin{align*}
		f_{n}(\xi_{1})&=\frac{1}{p}\left|\sum_{j=0}^{p-1}e^{-2\pi i\left\langle \left(\rho_{1}^{n}\frac{d_{j,1}}{d_{1,1}}+\vartheta_nd_{j,2},\rho_{2}^{n}d_{j,2}\right)^*, \left(\rho_{1}^{-n}\xi_{1},\xi_2\right)^* \right\rangle}\right|\\
		&=\frac{1}{p}\left|\sum_{j=0}^{p-1}e^{-2\pi i\left(\xi_{1}\frac{d_{j,1}}{d_{1,1}}+\vartheta_nd_{j,2}\rho_{1}^{-n}\xi_{1}+\rho_2^{n}\xi_{2}d_{j,2}\right)}\right|\\
		&\le\frac{1}{p}\left|1+e^{-2\pi i\xi_{1}}\right|+\frac{p-2}{p}.
	\end{align*}
	It is easy to see that if \( |\langle\xi_1\rangle| \geq \frac{1}{2t_1} \), then $ f_{n}(\xi_{1})<1.$  For each $n>1$, by Lemma \ref{lemma 2.8}, there exist \( \alpha, \beta > 0 \) such that
	\[
	| \widehat{\mu''}_{>n}(\xi) | = \prod_{j=1}^{\infty} f_{n+j}(\rho_1^{n+j} \xi_1) \leq \alpha \left( \ln |\rho_1^n \xi_1| \right)^{-\beta}
	\]
	for any \( |\rho_1^n \xi_1| > 1 \). It follows from Proposition \ref{pro 3.6} that \( \mu_{M'', D''} \) is not a spectral measure, a contradiction. Hence \( \rho_1^{-1} \in p\mathbb{Z} \).
\end{proof}
At the end of this section, we will prove that when \(\rho_1 \neq\rho_2 \) and \( c''\notin\mathbb{Q} \), \( \mu_{M,D} \) is not a spectral measure. From \eqref{2.2}, we define \( \mathcal{Z}_k := {M'}^{*^k} \mathcal{Z}(m_{D'}) \), which will be used frequently later.
\begin{lemma}\label{lemma 3.8}
	Let \(\rho_{1}^{-1} = t_1 \in p\mathbb{Z}\), and let \(\mu_{M', D'}\) be a spectral measure with spectrum \(\Lambda\). Then, there exists \(\lambda \in \Lambda\) such that \((\Lambda - \lambda) \cap \mathcal{Z}_k \neq \emptyset\) for all \(k = 1, 2, 3\), where \(\mathcal{Z}_k = {M'}^{*^k} \mathcal{Z}(m_{D'})\).
\end{lemma}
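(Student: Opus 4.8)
The plan is to recover the scale index $k$ of a zero from the $p$-adic valuation of its first coordinate, translate the problem into the selection-map tree of Proposition \ref{pro 3.4}, and then use completeness of $\Lambda$ to produce, at the single centre $\lambda=\bm 0\in\Lambda$, one neighbour in each of $\mathcal Z_1,\mathcal Z_2,\mathcal Z_3$.

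First I would make the valuation reduction precise. Write $\rho_1^{-1}=t_1\in p\mathbb Z$ and let $v_p$ denote $p$-adic valuation, so that $e:=v_p(t_1)\ge 1$. By \eqref{2.4} the first coordinate of any point of $\mathcal Z_k$ is $t_1^{k}d_{1,1}\frac{ja_1+pk_1}{p}$; since $1\le j\le p-1$ and $\gcd(a_1,p)=1$ the numerator is prime to $p$, so this first coordinate is nonzero and its $p$-adic valuation equals $v_p(d_{1,1})+ek-1$. As $e\ge1$ these valuations strictly increase with $k$, so $\mathcal Z_1,\mathcal Z_2,\dots$ are pairwise disjoint and $k$ is determined by the first coordinate. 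Because $\Lambda$ is a spectrum, Lemma \ref{lemma 2.1} gives $(\Lambda-\Lambda)\setminus\{\bm 0\}\subset\mathcal Z(\widehat{\mu}_{M',D'})=\bigcup_{k\ge1}\mathcal Z_k$; hence for distinct $\lambda,\lambda'\in\Lambda$ we have $\lambda'-\lambda\in\mathcal Z_k$ if and only if the first coordinate of $\lambda'-\lambda$ has $p$-adic valuation $v_p(d_{1,1})+ek-1$. Thus it suffices to find $\lambda\in\Lambda$ and three partners whose first coordinates differ from that of $\lambda$ with valuations $v_p(d_{1,1})+e-1,\ v_p(d_{1,1})+2e-1,\ v_p(d_{1,1})+3e-1$.

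Next I would pass to Proposition \ref{pro 3.4} (applied with $\eta=d_{1,1}$, $\rho=\rho_1$), which yields $\Lambda^{(1)}=d_{1,1}t_1p^{-1}\gamma^*(\Xi')$ for some $\Xi'\subset\Xi^\gamma$, with $\gamma^*$ injective on $\Xi'$ (distinct points of $\Lambda$ have distinct first coordinates, since every element of $\bigcup_k\mathcal Z_k$ has nonzero first coordinate). Using $\gamma(\mathbf I|_m)\in i_m+p\mathbb Z$ and $\gamma(0^m)=0$, if $\mathbf I$ and $\mathbf J$ agree on their first $r-1$ letters but differ at the $r$-th, the leading surviving term of $\gamma^*(\mathbf I)-\gamma^*(\mathbf J)$ is $t_1^{\,r-1}$ times a unit mod $p$, so $v_p(\gamma^*(\mathbf I)-\gamma^*(\mathbf J))=e(r-1)$; translating through the previous paragraph, $\lambda(\mathbf J)-\lambda(\mathbf I)\in\mathcal Z_k$ exactly when $\mathbf I$ and $\mathbf J$ first differ at position $k$. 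In particular, taking $\mathbf J=0^{k-1}j$ with $j\in\{1,\dots,p-1\}$ gives $\gamma^*(0^{k-1}j)=t_1^{\,k-1}\gamma(0^{k-1}j)+(\text{higher-order terms})$ with $\gamma(0^{k-1}j)\equiv j\not\equiv 0\pmod p$, whence $v_p(\gamma^*(0^{k-1}j))=e(k-1)$; since $\bm 0=\lambda(\emptyset)\in\Lambda$, orthogonality places $\lambda(0^{k-1}j)$ in $\mathcal Z(\widehat{\mu}_{M',D'})$ and its first-coordinate valuation pins it in $\mathcal Z_k$. Therefore, as soon as the words $0^{k-1}j$ ($k=1,2,3$) belong to $\Xi'$, the centre $\lambda=\bm 0$ already fulfils the conclusion.

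The crux is guaranteeing these words lie in $\Xi'$, and here completeness (not mere orthogonality) of $\Lambda$ is essential: I would show each $\Lambda_n$ of Proposition \ref{pro 3.4} is a spectrum of $\mu_n$, so that $\#\Lambda_n=p^n$. The bound $\#\Lambda_n\le p^n$ is immediate, since $\mu_n$ is supported on $p^n$ distinct atoms and $\Lambda_n$ is an orthogonal family in the $p^n$-dimensional space $L^2(\mu_n)$; the reverse inequality I would extract from completeness of $\Lambda$ in $L^2(\mu_{M',D'})$ through the factorization $L^2(\mu_{M',D'})\cong L^2(\mu_n)\otimes L^2(\mu_{>n})$, under which each $e_\lambda$ is the pure tensor $e_\lambda|_{\mu_n}\otimes e_\lambda|_{\mu_{>n}}$ and a trace/dimension count forces the distinct first factors to number $p^n$. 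Granting $\#\Lambda_n=p^n$: every nonempty word of $\Xi^\gamma$ ends in a nonzero letter, and there are exactly $p^n$ words of length $\le n$ of this kind (counting $\emptyset$), so injectivity of $\gamma^*$ forces $\Xi'$ to contain all of them. Taking $n=3$ places $0^{k-1}j\in\Xi'$ for $k=1,2,3$ and finishes the proof. The main obstacle is precisely this last upgrade — from the bi-zero decomposition of Proposition \ref{pro 3.4} to the assertion that each truncation $\Lambda_n$ is a genuine spectrum of $\mu_n$ — while the rest is bookkeeping of $p$-adic valuations along the selection-map tree.
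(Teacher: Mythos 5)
Your first two paragraphs are sound and reproduce what the paper proves as an internal claim: the index $k$ of the zero set $\mathcal Z_k$ containing $\lambda(\mathbf I)-\lambda(\mathbf J)$ is read off from the first coordinate and equals the first position where $\mathbf I$ and $\mathbf J$ differ. The gap is in the final step. Your assertion that completeness forces $\#\Lambda_n=p^n$, equivalently that $\Xi'$ must contain \emph{all} words of length $\le n$ ending in a nonzero letter, is not merely unproved --- it is false. Already in the one-dimensional analogue $\mu_{4,\{0,1\}}$ (here $p=2$, $t_1=4$), the canonical spectrum $\Lambda_0$ can be replaced by the spectrum $5\Lambda_0$; its smallest nonzero element is $10=2(1+4)$, whose word is $11$, while the length-one word $1$ (which would give the element $\pm 2$) does not occur. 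Hence for this spectrum $\Lambda_1=\{0\}$ and $\#\Lambda_1=1<p$, so $\Lambda_1$ is not a spectrum of $\mu_1$. The tensor/dimension count does not rescue this: in the factorization $L^2(\mu)\cong L^2(\mu_n)\otimes L^2(\mu_{>n})$ the exponentials indexed by \emph{long} words also have nontrivial, pairwise non-orthogonal restrictions to $L^2(\mu_n)$ (in the example, $e_{10}|_{\mu_1}=(1,-1)$ supplies the missing direction), so completeness of $\Lambda$ does not localize to the truncations $\Lambda_n$. This is precisely why Proposition \ref{pro 3.4} only delivers that $\Lambda_n$ is a \emph{bi-zero set} of $\nu_n$, and why Lemma \ref{lemma 2.7}(i) must \emph{assume} that $\Lambda_{\alpha_n}$ is a spectrum of $\mu_{\alpha_n}$ rather than deduce it.

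What you actually need is weaker --- that $\Xi'$ contains some word whose first nonzero letter sits at position $k$, for $k=1,2,3$ --- but your argument does not establish even this, and it is the genuinely hard part. The paper obtains it by two different mechanisms: for levels $1$ and $3$, if no difference of spectrum elements lands in $\mathcal Z_k$ then $\Lambda$ is a bi-zero set of the convolution with the $k$-th factor $\delta_{M^{-k}D}$ deleted, and Lemma \ref{lemma 2.2} forbids such a set from being a spectrum; for level $2$ (words of the form $0i\mathbf J$ with $i\ne 0$) it evaluates $Q_{\mu,\Lambda}$ at a specially chosen point $\xi_0=d_{1,1}p^{-1}t_1(t_1,-\rho_2^{-1}c')^*$, shows the terms coming from words starting with a nonzero letter vanish there, and bounds the remaining terms strictly below $1$ using $p\nmid d_{1,1}$ (Lemma \ref{lemma 2.9}), contradicting Lemma \ref{lemma 2.1}(ii). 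You would need to supply an argument of this kind; the valuation bookkeeping alone does not close the proof.
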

\begin{proof}
	From Proposition \ref{pro 3.4}, we obtain the decomposition \(\Lambda = \cup_{n=1}^\infty \Lambda_n\), where each \(\Lambda_n := \{\lambda(\mathbf{I}) : |\mathbf{I}| \leq n, \mathbf{I} \in \Xi'\}\) forms a bi-zero set for the measure \(\mu'_n\). If \((\Lambda - \Lambda) \cap \mathcal{Z}_3 = \emptyset\), then \((\Lambda - \Lambda) \setminus \{\bm{0}\} \subseteq \cup_{k \neq 3} \mathcal{Z}_k\). By Lemma \ref{lemma 2.2}, it follows that \(\Lambda\) cannot be a spectrum of \(\mu_{M', D'}\). This contradiction enables us to choose an element \(\lambda \in \Lambda\) such that \((\Lambda - \lambda) \cap \mathcal{Z}_3 \neq \emptyset\). We claim that \(\lambda(\mathbf{I}) - \lambda(\mathbf{J}) \in \mathcal{Z}_s \; \text{for any } \mathbf{I}, \mathbf{J} \in \Xi'\),
	where \( s = \min \left\{ i : \mathbf{I}|_i \neq \mathbf{J}|_i \right\}\). In fact
	\begin{align*}
		\lambda_{1}(\mathbf{I}) - \lambda_{1}(\mathbf{J}) &=  d_{1,1}p^{-1}t_1 (\gamma^*(\mathbf{I}) - \gamma^*(\mathbf{J})) \\
		&= d_{1,1}p^{-1}t_1 \left( \sum_{n=1}^{\infty} \left( \gamma(\mathbf{I}0^\infty | n) - \gamma(\mathbf{J}0^\infty | n) \right) t_1^{n-1} \right) \in d_{1,1}t_1^s \bigcup_{i=1}^{p-1} \left( \frac{i}{p} + \mathbb{Z} \right).
	\end{align*}
	This shows that the claim is true.
	
	If $\{ i \mathbf{I} \in \Xi' : i = 1, 2, \cdots, p-1 \} = \emptyset$, then it follows that $\lambda(\mathbf{I}) - \lambda(\mathbf{J}) \in \cup_{k=2}^{\infty} \mathcal{Z}_k$ for any $\mathbf{I}, \mathbf{J} \in \Xi'$. Using Lemma \ref{lemma 2.2} again, we conclude that $\Lambda$ is not a spectrum of $\mu_{M', D'}$, which leads to a contradiction. Thus, $\{ i \mathbf{I} \in \Xi' : i = 1, 2, \cdots, p-1 \} \neq \emptyset$. Since \( \bm{0} \in \Lambda \) and by the claim, we have \( \lambda(\mathbf{I}) \in \mathcal{Z}_1 \) for any \( \mathbf{I} \in \Xi' \) with \( \mathbf{I}|_1 \neq 0 \). Therefore,
	\begin{align*}
		m_{D'} \left( {M'^*}^{ - 1} \lambda(\mathbf{I}) \right)=\frac{1}{p} \sum_{j=0}^{p-1} e^{-2\pi i\left( t_{1}^{-1}\lambda_{1}(\textbf{I})\left({d_{1,1}}^{-1}d_{j,1}+c'd_{j,2}\right)+\rho_2\lambda_{2}(\textbf{I})d_{j,2} \right)}= 0.
	\end{align*}
	Let \( \xi_0 := d_{1,1}p^{-1}t_1\left(  t_1, -\rho_2^{-1} c' \right)^* \), then
	\[
	m_{D'} \left( {M'^{*}}^{-1} (\xi_0 + \lambda(\mathbf{I})) \right) = m_{D'} \left( {M'^*}^{-1} \lambda(\mathbf{I}) \right) = 0.
	\]
	Furthermore, we have the following equation:
	\begin{equation}\label{3.4}
		\sum_{\textbf{I} \in \Xi', \textbf{I}|_{1}\neq 0} \left| \widehat{\mu}_{M', D'}(\xi_0 + \lambda(\textbf{I})) \right|^2
		=\sum_{\textbf{I} \in \Xi', \textbf{I}|_{1}\neq 0} \left| m_{D'}\left({M'^*}^{-1}(\xi_0 + \lambda(\textbf{I}))\right) \right|^2 \left| \widehat{\mu'}_{> 1}(\xi_0 + \lambda(\textbf{I})) \right|^2=0,
	\end{equation}
	where \( \mu'_{> 1} \) is defined similarly to \eqref{3.3.1}.
	We assert that \( \Lambda_\textbf{I} := \{ \lambda(\textbf{IJ}) : \textbf{IJ} \in \Xi' \} \) is a bi-zero set of \( \mu'_{> n} \) for any \( \textbf{I} \in \Xi^{n} \). For any \( \textbf{IJ} = \textbf{IJ}' \in \Lambda_\textbf{I} \), by the first Claim, it follows that
	\[
	\lambda(\textbf{IJ}') - \lambda(\textbf{IJ}) \in \mathcal{Z}_s \subset \mathcal{Z}(\widehat{\mu'}_{> n})
	\]
	where \( s = n + \min \{ j : \textbf{IJ}|_j \neq \textbf{IJ}'|_j \} \). Therefore, \( \Lambda_\textbf{I} \) is a bi-zero set of \( \mu'_{> n} \), and
	\begin{equation}\label{3.5}
		\sum_{\lambda \in \Lambda_\textbf{I}} \left| \widehat{\mu'}_{> n}(\xi + \lambda) \right|^2 \leq 1
	\end{equation}
	for any \( \xi \in \mathbb{R}^2 \). Assume that \( \{ 0 i \textbf{I} \in \Xi' : i = 1, 2,\cdots,p-1 \} = \emptyset \). Then, from \eqref{3.4}, it follows that
	\[
	\sum_{\lambda \in \Lambda} \left| \widehat{\mu}_{M', D'}(\xi_0 + \lambda) \right|^2 = \sum_{\textbf{I} \in \Xi'} \left| \widehat{\mu}_{M', D'}(\xi_0 + \lambda(\textbf{I})) \right|^2 = \sum_{\textbf{I} = 00 \textbf{J} \in \Xi'} \left| \widehat{\mu}_{M', D'}(\xi_0 + \lambda(\textbf{I})) \right|^2.
	\]
	From \eqref{3.5}, the expression above is bounded as follows
	\begin{align*}
		&\sum_{\textbf{I} = 00 \textbf{J} \in \Xi'} \left| m_{D'}\left({M'^*}^{- 2} \left(\xi_0 + \lambda(\textbf{I})\right)\right) \right|^2 \left| \widehat{\mu'}_{> 2} (\xi_0 + \lambda(\textbf{I})) \right|^2\\
		&\leq \max_{{\textbf{I} = 00 \textbf{J} \in \Xi'}} \left| m_{D'}\left({M'^*}^{- 2}\left(\xi_0 + \lambda(\textbf{I})\right)\right) \right|^2 \\
		&\leq \max_{{\textbf{I} = 00 \textbf{J} \in \Xi'}} \frac{1}{p^{2}} \left( \left|1+ e^{-2\pi i \left(t_{1}^{-2}\lambda_{1}(\textbf{I})+p^{-1}d_{1,1}\right)} \right| + p-2 \right)^2 < 1.
	\end{align*}
	According to Lemma \ref{lemma 2.9}, we obtain \( p\nmid d_{1,1} \), and thus the last inequality holds. From Lemma \ref{lemma 2.1}, \( \Lambda \) is not a spectrum of \( \mu_{M', D'} \), which leads to a contradiction.
\end{proof}
In the following, we will prove Theorem \ref{thm1.2}.
\begin{proof}[\textbf{Proof of Theorem \ref{thm1.2}}]
	Suppose that \( \mu_{M', D'} \) is a spectral measure. By Lemma \ref{lemma 3.7}, \( \rho^{-1}_1 = t_1 \in p\mathbb{Z} \). According to Lemma \ref{lemma 3.8}, we can assume that \( \bm{0} \in \Lambda \) is a spectrum with \( \Lambda \cap \mathcal{Z}_j \neq \emptyset \) for \( j = 1, 2, 3\). For each \( n = 1, 2, 3 \) and \( \lambda_n \in \mathcal{Z}_n \cap \Lambda \), from \eqref{2.4}, let
	\[
	\lambda_n = \frac{1}{p}
	{\left({\begin{array}{*{20}{c}}
				t_1^{n} d_{1,1}\left( s_{n,1} + pk_{n,1} \right)\\
				\rho^{-n}_2 \left( s_{n,2} + pk_{n,2} - c'd_{1,1} \left( s_{n,1} + pk_{n,1} \right) \right)\\
		\end{array}}\right)},
	\]
where $s_{n}=(s_{n,1},s_{n,2})^*\in \mathcal{E}_{p}$ and $k_{n}=(k_{n,1},k_{n,2})^*\in \mathbb Z^{2}$. For any \( n > \ell \), the difference \( \lambda_{n} - \lambda_{\ell} \) can be written as
	\[ \frac{1}{p}
	{\left({\begin{array}{*{20}{c}}
				d_{1,1}t_1^{\ell} \left(t_1^{n-\ell}\left( s_{n,1} + pk_{n,1} \right)-\left( s_{\ell,1} + pk_{\ell,1} \right)\right)\\
				\rho^{-\ell}_2 \left(\rho^{\ell-n}_2\left( s_{n,2} + pk_{n,2} - c'd_{1,1} \left( s_{n,1} + pk_{n,1} \right) \right)-\left( s_{\ell,2} + pk_{\ell,2} - c'd_{1,1} \left( s_{\ell,1} + pk_{\ell,1} \right) \right)\right)\\
		\end{array}}\right)}.\]
	Since \( \lambda_{n} - \lambda_{\ell} \in \cup_{j=1}^{\infty} \mathcal{Z}_j \), it follows from the first component that $ \lambda_{n} - \lambda_{\ell} \in \mathcal{Z}_{\ell} = {M'^*}^\ell {R^*}^{-1} \mathcal{Z}(m_D)$. Therefore, the second component satisfies the following relationship
	\begin{align*}
		&\rho^{\ell-n}_2\left( s_{n,2} + pk_{n,2} - c' d_{1,1}\left( s_{n,1} + pk_{n,1} \right) \right)-\left( s_{\ell,2} + pk_{\ell,2} - c'd_{1,1} \left( s_{\ell,1} + pk_{\ell,1} \right) \right)\\
		=&s_{n\ell,2} + pk_{n\ell,2} -c'd_{1,1}\left( t_1^{n-\ell}\left( s_{n,1} + pk_{n,1} \right)-\left( s_{\ell,1} + pk_{\ell,1} \right)\right).
	\end{align*}
	We can then derive the following equation
	\begin{align}\label{3.7}
		\nonumber
		\left(\frac{\rho_2^{-1}}{t_{1}}\right)^{n-\ell}&=\frac{\frac{s_{n\ell,2}  + pk_{n\ell,2} + s_{\ell,2} + pk_{\ell,2}}{t_{1}^{n-\ell}}-c'd_{1,1}\left( s_{n,1} + pk_{n,1} \right)}{s_{n,2} + pk_{n,2} - c'd_{1,1} \left( s_{n,1} + pk_{n,1} \right)}\\
		&=1+\frac{\frac{s_{n\ell,2} + pk_{n\ell,2} + s_{\ell,2} + pk_{\ell,2}}{t_{1}^{n-\ell}}- s_{n,2} - pk_{n,2} }{s_{n,2} + pk_{n,2} - c'd_{1,1} \left( s_{n,1} + pk_{n,1} \right)}.
	\end{align}
	Note that \( \frac{s_{n\ell,2} + p k_{n\ell,2} + s_{\ell,2} +p k_{\ell,2}}{t_1^{n-\ell}} \in \mathbb{Q} \). This implies that for any \( \ell_1<\ell_2 < n \),
	\[
	\frac{\left( \frac{\rho_2^{-1}}{t_1} \right)^{n - \ell_1} - 1}{\left( \frac{\rho_2^{-1}}{t_1} \right)^{n - \ell_2} - 1} \in \mathbb{Q}.
	\]
	Set \( \ell_1 = 1 \), \( \ell_2 = 2 \), and \( n = 3 \). We obtain
	\[
	\frac{\rho_2^{-1}}{t_1} + 1 \in \mathbb{Q}.
	\]
	Returning to \eqref{3.7}, we obtain \( c', c'' \in \mathbb{Q} \), which leads to a contradiction of the initial assumption. Hence, \( \mu_{M', D'} \) is not a spectral measure.
\end{proof}

\section{The case $\rho_1 = \rho_2 $}

In this section, we explore the scenario where $\rho_1 = \rho_2:=\rho$ and present the proofs of Theorems \ref{thm1.7}, \ref{thm1.8}, and \ref{thm1.5}. Initially, we establish a necessary and sufficient condition for the existence of an infinite orthogonal set of exponential functions in $L^2(\mu_{M, D})$. When such an infinite orthogonal set does not exist in $L^2(\mu_{M, D})$, we not only provide an estimate for the number of orthogonal exponential functions but also determine its maximal cardinality. Finally, we obtain a necessary and sufficient condition for $\mu_{M, D}$ to be a spectral measure.
\begin{lemma}\label{lemma4.3}
	Suppose that \( \sigma \in \{  ( \frac{s}{t} )^{\frac{1}{r}} : s, t, r \in \mathbb{N}, \gcd(s,t)=1\} \) satisfies the equation
	\[
	k_1 \sigma^i + k_2 \sigma^j = k_3 \sigma^l,
	\]
	where \( i, j, l \) are nonnegative integers. Then the following statements hold.
	\begin{itemize}
		\item[$(i)$] If \( k_1, k_2, k_3 \in \mathbb{Z} \setminus \{0\} \), then \( i \equiv j \equiv l \pmod{r} \).
		\item[$(ii)$]If \( k_1, k_2, k_3 \in \mathbb{Z} \setminus p\mathbb{Z} \), and \( s \in p\mathbb{Z} \) or \( t \in p\mathbb{Z} \), where \(p \) is a prime, then at least two of the integers \( i, j, l \) must be equal.
	\end{itemize}
\end{lemma}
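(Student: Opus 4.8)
The equation to analyze is $k_1 \sigma^i + k_2 \sigma^j = k_3 \sigma^l$ where $\sigma = (s/t)^{1/r}$ with $\gcd(s,t)=1$. The plan is to clear the fractional exponents by raising $\sigma$ to the power $r$, so that $\sigma^r = s/t$ is a genuine rational number. First I would reduce to a normal form: without loss of generality divide through by the smallest power of $\sigma$ appearing, so that one of the exponents becomes $0$; equivalently, I may assume $\min\{i,j,l\}=0$. Then I would write each exponent as $i = r q_i + \epsilon_i$ with $0 \le \epsilon_i < r$ (and similarly for $j,l$), so that $\sigma^i = \sigma^{\epsilon_i}(s/t)^{q_i}$, separating the ``irrational part'' $\sigma^{\epsilon_i}$ from the rational multiplier $(s/t)^{q_i}$.

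\textbf{Proof of $(i)$.} The key observation is that the numbers $1, \sigma, \sigma^2, \dots, \sigma^{r-1}$ are linearly independent over $\mathbb{Q}$ whenever $\sigma = (s/t)^{1/r}$ generates a degree-$r$ extension; more carefully, one should use that $\{1,\sigma,\dots,\sigma^{r-1}\}$ spans $\mathbb{Q}(\sigma)$ as a $\mathbb{Q}$-vector space and that $\sigma^r \in \mathbb{Q}$. Collecting the three terms according to their residues $\epsilon_i, \epsilon_j, \epsilon_l \pmod r$ and multiplying through by a common power of $t$ to clear denominators, the equation becomes a $\mathbb{Z}$-linear relation among powers $\sigma^0,\dots,\sigma^{r-1}$. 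If the residues $\epsilon_i,\epsilon_j,\epsilon_l$ were not all equal, linear independence would force the corresponding integer coefficients to vanish, contradicting $k_1,k_2,k_3 \neq 0$; hence $\epsilon_i=\epsilon_j=\epsilon_l$, which is precisely $i\equiv j\equiv l \pmod r$. The cleanest way to make this rigorous is probably to avoid invoking minimal-polynomial theory directly and instead argue via a $p$-adic or prime-valuation comparison on both sides after clearing to integers, but the linear-independence route is the most transparent.

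\textbf{Proof of $(ii)$.} Here the hypotheses are weaker ($k_1,k_2,k_3 \in \mathbb{Z}\setminus p\mathbb{Z}$) but so is the conclusion (only two of $i,j,l$ need agree), and we gain the extra assumption that $p \mid s$ or $p \mid t$. The natural tool is the $p$-adic valuation $v_p$. Suppose $p \mid t$ (the case $p\mid s$ is symmetric, using $v_p$ with opposite sign). Multiply the equation through by $t^{m}$ for $m$ large enough to make every term an algebraic integer, then compare $p$-adic valuations. Since $\gcd(s,t)=1$ and $p\mid t$, we have $v_p(\sigma^i) = -\tfrac{i}{r}v_p(t)$ taking distinct values for distinct $i$ (as a suitably extended valuation on $\mathbb{Q}(\sigma)$), while $v_p(k_1)=v_p(k_2)=v_p(k_3)=0$. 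In a valuation identity $A+B=C$ with $A,B,C$ nonzero, the two terms of smallest valuation must have equal valuation (the ultrametric ``two-out-of-three'' principle); this forces two of the exponents $i,j,l$ to coincide. I would phrase this by extending $v_p$ to $\mathbb{Q}(\sigma)$ and noting that $v_p(k_a \sigma^a)$ depends only on $a$ modulo the ramification, then invoking the isosceles-triangle principle of non-archimedean valuations.

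\textbf{Main obstacle.} The delicate point is justifying that the valuation $v_p$ extends to $\mathbb{Q}(\sigma)$ so that $v_p(\sigma) = \tfrac{1}{r}v_p(s/t)$ is well defined and that distinct exponents give distinct valuations (equivalently, that $p$ is totally ramified, or at least that the relevant powers of $\sigma$ have distinct valuations). If $\sigma$ does not generate a degree-$r$ extension, one must be careful that collapsing of exponents does not create spurious coincidences; the assumption $\gcd(s,t)=1$ together with $p\mid st$ is exactly what guarantees that the $p$-valuation separates the powers, so the argument should go through once this separation is pinned down. I expect the bulk of the work to be setting up this valuation-theoretic framework cleanly rather than any single computation.
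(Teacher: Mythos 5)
Your part (ii) is correct and takes a genuinely different route from the paper. The paper first invokes part (i) to write $i=i_1r+k$, $j=j_1r+k$, $l=l_1r+k$ with $i_1>j_1>l_1$, divides by $\sigma^l$ to land in $\mathbb{Q}$, and clears denominators to get $k_1 s^{i_1-l_1}+k_2 s^{j_1-l_1}t^{i_1-j_1}=k_3 t^{i_1-l_1}$, which is impossible modulo $p$ because exactly one side is divisible by $p$ according as $p\mid s$ or $p\mid t$. Your ultrametric argument --- extend $v_p$ to $\mathbb{Q}(\sigma)$, observe that $\sigma^r=s/t$ forces $v_p(\sigma)=\tfrac{1}{r}v_p(s/t)\neq 0$ since $\gcd(s,t)=1$ and $p\mid st$, so the three terms have pairwise distinct valuations unless two exponents coincide, contradicting the isosceles principle --- is sound, and the ``separation'' worry you raise at the end is already settled by $v_p(\sigma)\neq 0$; no ramification analysis is needed because $v(\sigma)$ is pinned down by $rv(\sigma)=v(s/t)$ for any extension $v$ of $v_p$. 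Your version has the advantage of not using part (i) at all; the paper's version is more elementary but logically depends on (i).

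For part (i) the paper offers no argument, citing \cite[Lemma 2.5]{D2}, so your linear-independence route is really a proof of that cited result --- and it has a genuine gap that you only half-acknowledge. The independence of $1,\sigma,\dots,\sigma^{r-1}$ over $\mathbb{Q}$ requires $[\mathbb{Q}(\sigma):\mathbb{Q}]=r$, which fails when $s/t$ is a perfect $d$-th power for some $d\mid r$, $d>1$; in that case the conclusion of (i) itself fails (take $s/t=4$, $r=2$, $\sigma=2$: then $1\cdot\sigma^1+1\cdot\sigma^1=1\cdot\sigma^2$ with $1\not\equiv 2\pmod 2$). So the statement implicitly assumes $r$ is minimal with $\sigma^r\in\mathbb{Q}$, as it is in the source and in the applications to Theorems \ref{thm1.7} and \ref{thm1.8}. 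Under that minimality one can close your argument: the conjugates of $\sigma$ are among $\zeta\sigma$ with $\zeta^r=1$, so the norm equals $\zeta_0\sigma^m$ with $m=[\mathbb{Q}(\sigma):\mathbb{Q}]$; since this norm is rational and $\sigma^m>0$, the root of unity $\zeta_0$ is $\pm1$, whence $\sigma^m\in\mathbb{Q}$ and minimality gives $m\geq r$, hence $m=r$ and $x^r-s/t$ is irreducible. Without supplying this step (or the minimality hypothesis), your part (i) is not complete; your alternative suggestion to ``argue via a $p$-adic comparison instead'' is not developed and, unlike in part (ii), there is no prime forced to divide $st$ here, so it is not clear it would work.
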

\begin{proof}
	$(i)$ This result is an immediate consequence of \cite[Lemma 2.5]{D2}. $(ii)$ Suppose, for the sake of contradiction, that \( i, j, l \) are distinct nonnegative integers. Without loss of generality, we assume that \( i > j > l \). By $(i)$, we can express \( i \), \( j \), and \( l \) as follows: $
	i = i_1r + k, \; j = j_1r + k, \; l = l_1r + k,
	$
	where \( i_1 > j_1 > l_1 \) and \( 0 \leq k \leq r - 1 \). From \( k_1 \sigma^i + k_2 \sigma^j = k_3 \sigma^l \), we obtain the equation
	\[
	k_{1} s^{i_{1}-l_{1}} + k_{2} s^{j_{1}-l_{1}} t^{i_{1}-j_{1}} = k_{3} t^{i_{1}-l_{1}},
	\]
	which can be easily verified to be impossible, thus leading to a contradiction and completing the proof.
\end{proof}
\begin{proof}[\textbf{Proof of Theorem \ref{thm1.7}}]
$"\Rightarrow".$ Suppose that \( L^2(\mu_{M,D}) \) admits an infinite orthogonal set \( E_{\Lambda} \). Then
	\[
	\Lambda \setminus \{\bm{0}\}, (\Lambda - \Lambda) \setminus \{\bm{0}\} \subset\mathcal{Z}(\widehat{\mu}_{M,D})=\bigcup_{j=1}^{p-1} \mathcal{H}_{j} ,
	\]
	where
\begin{equation*}
		\mathcal{H}_j=\bigcup_{k=1}^{\infty}(M^*)^{k}\left(\frac{j\bm{a}+p\mathbb{Z}^{2}}{p}\right).
\end{equation*}
	We claim that \( \Lambda^{(1)} \) is infinite. Suppose on the contrary that \( \Lambda^{(1)} \) is finite, then \( \Lambda^{(2)} \) must be infinite. By the pigeonhole principle, there exist \( \lambda_1 \neq\lambda_2 \in \Lambda \setminus \{\bm{0}\} \) such that \( \lambda_{1,1} = \lambda_{2,1} \), where \( \lambda_i = (\lambda_{i,1}, \lambda_{i,2})^* \) for \( i = 1, 2 \). Thus,
	\[
	\lambda_1 - \lambda_2 = {\left({\begin{array}{*{20}{c}}
				0\\
				\lambda_{1,2} - \lambda_{2,2}
		\end{array}}\right)} \in \bigcup_{j=1}^{p-1} \mathcal{H}_{j} ,
	\]
	which leads to a contradiction.
	Thus, \( \Lambda^{(1)} \) is an infinite set satisfying
	\[
	\Lambda^{(1)} \setminus \{0\}, \left(\Lambda^{(1)} - \Lambda^{(1)}\right) \setminus \{0\} \subset \bigcup_{j=1}^{\infty} \rho^{-j} \left(\frac{\{1,2,\cdots,p-1\}}{p}+\mathbb{Z}\right).
	\]
	According to Lemma \ref{lemma3.1}, we have \( \rho^{-1} = (\frac{t}{s})^{\frac{1}{r}} \) for some \( s, t, r \in \mathbb{N} \) with \( t \in p\mathbb{Z} \) and \( \gcd(s, t) = 1 \).
	
	In the following, we will prove that \( c = \kappa \rho^{-1} \), where \( \kappa \in \mathbb{Q} \).
	Since \( \# \Lambda = \infty \), without loss of generality, there exist \( \lambda_1 \neq  \lambda_2 \in \Lambda \setminus \{\bm{0}\} \) such that \( \lambda_1, \lambda_2 \in \mathcal{H}_1 \). From the definition of \( \mathcal{H}_1 \), we can write
	\[
	\lambda_1 = \rho^{-j_{1}} \begin{pmatrix} \frac{a_{1}}{p}+k_{j_{1},1} \\ \frac{j_{1}c\rho a_{1}+a_{2}}{p}+j_{1}c\rho k_{j_{1},1}+k_{j_{1},2} \end{pmatrix}, \quad
	\lambda_2 = \rho^{-j_{2}} \begin{pmatrix} \frac{a_{1}}{p}+k_{j_{2},1} \\ \frac{j_{2}c\rho a_{1}+a_{2}}{p}+j_{2}c\rho k_{j_{2},1}+k_{j_{2},2} \end{pmatrix},
	\]
	and
	\begin{equation}\label{4.1}
		\lambda_1 - \lambda_2 = \rho^{-j_{3}} \begin{pmatrix} \frac{ia_{1}}{p}+k_{j_{3},1} \\ \frac{j_{3}c\rho ia_{1}+ia_{2}}{p}+j_{3}c\rho k_{j_{3},1}+k_{j_{3},2} \end{pmatrix},
	\end{equation}
	where $i\in\{1,2,\cdots,p-1\}$, \( k_{j_s} = (k_{j_s,1}, k_{j_s,2})^* \in \mathbb{Z}^2 \) for \( s = 1, 2, 3 \), and \( \bm{a} = (a_1, a_2)^*\in  \mathcal{E}_{p} \).
	By \eqref{4.1}, we obtain
	\begin{equation}\label{4.2}
		\rho^{-j_{1}}\left(a_{1}+pk_{j_{1},1}\right)-\rho^{-j_{2}}\left(a_{1}+pk_{j_{2},1}\right) = \rho^{-j_{3}}\left(ia_{1}+pk_{j_{3},1}\right),
	\end{equation}
	and
	\begin{align}\label{4.3}
		\nonumber&\rho^{-j_{1}}\left(j_{1}c\rho a_{1}+a_{2}+p\left(j_{1}c\rho k_{j_{1},1}+k_{j_{1},2}\right)\right)-\rho^{-j_{2}}\left(j_{2}c\rho a_{1}+a_{2}+p\left(j_{2}c\rho k_{j_{2},1}+k_{j_{2},2}\right)\right)\\
		&=\rho^{-j_{3}}\left(j_{3}c\rho ia_{1}+ia_{2}+p\left(j_{3}c\rho k_{j_{3},1}+k_{j_{3},2}\right)\right).
	\end{align}
	Note that \( \rho^{-1} = ( \frac{t}{s} )^{\frac{1}{r}} \) and \( t \in p\mathbb{Z} \). By Lemma \ref{lemma4.3}, we conclude that \( j_{1} \equiv j_{2} \equiv j_{3} \pmod{r} \), and at least two of \( j_{1}, j_{2}, j_{3} \) are equal. Since \( p(k_{j_{1},1} - k_{j_{2},1}) \in p\mathbb{Z} \) and \( ia_{1} + pk_{j_{3},1} \notin p\mathbb{Z} \), it follows from \eqref{4.2} that \( j_{1}, j_{2}, j_{3} \) are not all equal. Without loss of generality, we assume that \( j_{1}\neq j_{2}=j_{3} \). Thus, combining with \eqref{4.2} and \eqref{4.3}, we obtain that
	\[
	c=\rho^{-1}\frac{\left(a_{2}+pk_{j_{1},2}\right)-\rho^{j_{1}-j_{3}}\left(ia_{2}+pk_{j_{3},2}+a_{2}+pk_{j_{2},2}\right)}{(j_{3}-j_{1})\left(a_{1}+pk_{j_{1},1}\right)}\in\rho^{-1}\mathbb{Q}.
	\]
	The necessity follows.
	
$"\Leftarrow".$  Suppose that \( \rho^{-1} =( \frac{t}{s})^{\frac{1}{r}} \) and \( c = \kappa \rho^{-1} \), where \( s, t, r \in \mathbb{N} \), \( t \in p\mathbb{Z} \), \( \gcd(s, t) = 1 \), and \( \kappa \in \mathbb{Q} \). Then \( \gcd(s, p) = 1 \).
	Let \( \kappa = \frac{v}{u} \), where \( u \in \mathbb{N} \), \( v \in \mathbb{Z} \), and \( \gcd(u, v) = 1 \). Define
	\[
	\Lambda = \left\{ \frac{t^{pu\ell} \bm{a}}{p} : \ell \in \mathbb{N} \right\}, \quad \text{where} \; \bm{a} = (a_{1}, a_{2})^{*}\in \mathcal{E}_{p} .
	\]
	Note that \( t = s\rho^{-r} \), we have
	$\frac{t^{pu\ell} \bm{a}}{p}=\rho^{-rpu\ell}\frac{s^{pu\ell} \bm{a}}{p}$.
	Furthermore, since
\begin{equation*}
	M^{*r} = {\begin{bmatrix}
		\rho^{-1} & 0 \\
		c & \rho^{-1}
	\end{bmatrix}}^{r}
=\begin{bmatrix}
		\rho^{-r} & 0 \\
		rc\rho^{-r+1} & \rho^{-r}
	\end{bmatrix}
=\begin{bmatrix}
		\rho^{-r} & 0 \\
		r\kappa \rho^{-r} & \rho^{-r}
	\end{bmatrix} ,
\end{equation*}
we get
	\begin{equation}\label{4.4}
		\frac{t^{pu\ell} \bm{a}}{p}= {M^{*}}^{rpu\ell} \begin{pmatrix} \frac{a_{1}s^{pu\ell}}{p} \\ \frac{(a_{2}-rpu\ell\kappa a_{1})s^{pu\ell}}{p} \end{pmatrix}.
	\end{equation}
	It follows from \eqref{4.4}, \( \gcd(s, p) = 1 \), and \( \frac{(a_{2} - rpu\ell \kappa a_{1}) s^{pu\ell}}{p} \in \frac{a_{2} s^{pu\ell}}{p} + \mathbb{Z} \) that \( \Lambda \setminus \{ \bm{0} \} \subset \mathcal{Z}(\widehat{\mu}_{M, D}) \).
	For any \( \lambda_1, \lambda_2 \in \Lambda \setminus \{ \bm{0} \} \) with \( \lambda_1 \neq \lambda_2 \), we can write
	\[
	\lambda_1 = \frac{t^{pu\ell_1} \bm{a}}{p} \quad \text{and} \quad \lambda_2 = \frac{t^{pu\ell_2} \bm{a}}{p}
	\]
	for two positive integers \( \ell_1 > \ell_2 \). Similar to \eqref{4.4},
	\[
	\lambda_1 - \lambda_2 = {M^{*}}^{rpu\ell_{2}}\left(t^{pu(\ell_{1}-\ell_{2})}-1\right) \begin{pmatrix} \frac{a_{1}s^{pu\ell_{2}}}{p} \\ \frac{(a_{2}-rpu\ell_{2}\kappa a_{1})s^{pu\ell_{2}}}{p} \end{pmatrix},
	\]
	which implies that \( (\Lambda - \Lambda) \setminus \{ \bm{0} \} \subset \mathcal{Z}(\widehat{\mu}_{M, D}) \). Therefore, \( E_{\Lambda} \) forms an infinite orthogonal set in \( L^2(\mu_{M, D}) \). In summary, we have thus established the proof of the theorem.
\end{proof}
\begin{proof}[\textbf{Proof of Theorem \ref{thm1.8}}]
	(i) We prove the conclusion by contradiction. Suppose, for the sake of argument, that there exists an orthogonal set \( E_{\Lambda} \) such that \( \#\Lambda > p \). Without loss of generality, we assume that \( \#\Lambda = p+1 \), and let \( \Lambda = \{ \lambda_{0}=\bm{0}, \lambda_1, \lambda_2,\cdots, \lambda_p \} \). For any distinct \( i, j \in \{ 0, 1, 2, \cdots, p \} \), \(  \lambda_i - \lambda_j \in \mathcal{Z}(\widehat{\mu}_{M, D}) =\cup_{j=1}^{p-1} \mathcal{H}_{j} \). Let \( c = \kappa \rho^{-1} \) where \( \kappa \notin \mathbb{Q} \). We can express \( \lambda_i \) as
	\begin{equation}\label{4.5}
		\lambda_i = \rho^{-j_i} \begin{pmatrix}
			\frac{l_i a_1}{p} + k_{j_i,1} \\
			\frac{j_i \kappa l_i a_1 + l_i a_2}{p} + j_i \kappa k_{j_i,1} + k_{j_i,2}
		\end{pmatrix},
	\end{equation}
	where \( \{j_i\}_{i=1}^{p} \subset \mathbb{N} \) is a monotonically decreasing sequence, \( k_{j_i} = (k_{j_{i},1}, k_{j_{i},2})^* \in \mathbb{Z}^2 \), \( l_i \in \{1, 2, \cdots, p-1\} \) for \( i = 1, 2, \cdots, p \), and \( \bm{a} = (a_1, a_2)^* \in \mathcal{E}_{p}\). Considering the orthogonality of \( \Lambda \) and its first component, we have \( j_1 \equiv j_2 \equiv \cdots \equiv j_p \pmod{r} \), which implies that there exists \( 0 \leq b \leq r - 1 \) such that
	\[
	j_i = h_i r + b,\quad i = 1, 2, \cdots, p.
	\]
	Clearly, \( \{h_i\}_{i=1}^{p}  \) is a monotonically decreasing sequence. We will divide the proof into two cases as follows.
	
	\textbf{Case I.} \( s, t \notin p\mathbb{Z} \). By the pigeonhole principle, and since \(s, t, l_i a_1 + pk_{j_i,1} \notin p\mathbb{Z} \) for \( i = 1, 2, \cdots, p \), it is clear that at least two of the following set
	\[
	\left\{ \left( l_{i}a_1 + pk_{j_i,1} \right) s^{h_1 - h_i} t^{h_i} \right\}_{i=1}^p
	\]
	are congruent modulo \( p \). We can assume, without loss of generality, that
	\begin{equation}\label{4.6}
		\left( l_{1}a_1 + pk_{j_1,1}\right)q^{h_1} \equiv 1 \pmod{p}\;\; \text{and}\;\;\left( l_{2}a_1 + pk_{j_2,1} \right) s^{h_1 - h_2} t^{h_2} \equiv 1 \pmod{p}.
	\end{equation}
	Since \( \lambda_1 - \lambda_2 \in \cup_{j=1}^{p-1} \mathcal{H}_j \), there exist \( j_{p+1} \in \mathbb{N} \), \( k_{j_{p+1}} = (k_{j_{p+1},1}, k_{j_{p+1},2})^* \in \mathbb{Z}^2 \), and \( l_{p+1} \in \{1, 2, \cdots, p-1\} \) such that
	\[
	\lambda_1 - \lambda_2 = \rho^{-j_{p+1}} \begin{pmatrix}
		\frac{l_{p+1} a_1}{p} + k_{j_{p+1},1} \\
		\frac{j_{p+1} \kappa l_{p+1} a_1 + l_{p+1} a_2}{p} + j_{p+1} \kappa k_{j_{p+1},1} + k_{j_{p+1},2}
	\end{pmatrix}.
	\]
	Using Lemma \ref{lemma4.3}, \( j_1 \equiv j_2 \equiv j_{p+1} \pmod{r} \). Let \( j_{p+1} = h_{p+1}r + b \). Then, the equation
	\[
	\rho^{-j_1} \left( l_1 a_1 + p k_{j_1,1} \right) - \rho^{-j_2} \left( l_2 a_1 + p k_{j_2,1} \right) = \rho^{-j_{p+1}} \left( l_{p+1} a_1 + p k_{j_{p+1},1} \right)
	\]
	implies that
	\[
	\left( \frac{t}{s} \right)^{h_1} \left( l_1 a_1 + p k_{j_1,1} \right) - \left( \frac{t}{s} \right)^{h_2} \left( l_2 a_1 + p k_{j_2,1} \right) = \left( \frac{t}{s} \right)^{h_{p+1}} \left( l_{p+1} a_1 +p k_{j_{p+1},1} \right).
	\]
	One can easily derive that
	\[
	s^{h_{p+1}} \left( t^{h_1} \left( l_1 a_1 + p k_{j_1,1} \right) - t^{h_2} s^{h_1 - h_2} \left( l_2 a_1 + p k_{j_2,1} \right) \right) = s^{h_{1}}t^{h_{p+1}} \left( l_{p+1} a_1 + p k_{j_{p+1},1} \right).
	\]
	The left-hand side of the above equation belongs to \( p\mathbb{Z} \) as indicated in \eqref{4.6}, while the right-hand side does not. This leads to a contradiction.
	
	\textbf{Case II.}
	$s \in p\mathbb{Z}, \ t \notin p\mathbb{Z} $ or $ s \notin p\mathbb{Z}, \ t \in p\mathbb{Z}.$ By the pigeonhole principle, at least two of \( \lambda_{1}, \lambda_2, \cdots,\lambda_p \) must fall into the same \( \mathcal{H}_i \). We may assume that \( \lambda_1, \lambda_2 \in \mathcal{H}_1 \), and similarly to \eqref{4.5}, \( (a_1 + pk_{j_1,1}) \equiv (a_1 + pk_{j_2,1}) \pmod p \). Since \( \lambda_1 - \lambda_2 \in \cup_{j=1}^{p-1} \mathcal{H}_j \), there exist \( j_{p+2} \in \mathbb{N} \), \( k_{j_{p+2}} = (k_{j_{p+2},1}, k_{j_{p+2},2})^* \in \mathbb{Z}^2 \), and \( l_{p+2} \in \{1, 2, \cdots, p-1\} \) such that
	\begin{equation}\label{4.7}
		\lambda_1 - \lambda_2 = \rho^{-j_{p+2}} \begin{pmatrix}
			\frac{l_{p+2} a_1}{p} + k_{j_{p+2},1} \\
			\frac{j_{p+2} \kappa l_{p+2} a_1 + l_{p+2} a_2}{p} + j_{p+2} \kappa k_{j_{p+2},1} + k_{j_{p+2},2}
		\end{pmatrix}.
	\end{equation}
	From \eqref{4.5} and \eqref{4.7}, it follows that
	\begin{equation}\label{4.8}
		\rho^{-j_{1}}\left(a_{1}+pk_{j_{1},1}\right)-\rho^{-j_{2}}\left(a_{1}+pk_{j_{2},1}\right) = \rho^{-j_{p+2}}\left(l_{p+2}a_{1}+pk_{j_{p+2},1}\right)
	\end{equation}
	and
	\begin{align}\label{4.9}
		\nonumber&\rho^{-j_{1}}\left(j_{1}\kappa a_{1}+a_{2}+p\left(j_{1}\kappa k_{j_{1},1}+k_{j_{1},2}\right)\right)-\rho^{-j_{2}}\left(j_{2}\kappa a_{1}+a_{2}+p\left(j_{2}\kappa k_{j_{2},1}+k_{j_{2},2}\right)\right)\\
		&=\rho^{-j_{p+2}}\left(j_{p+2}\kappa l_{p+2}a_{1}+l_{p+2}a_{2}+p\left(j_{p+2}\kappa k_{j_{p+2},1}+k_{j_{p+2},2}\right)\right).
	\end{align}
	Given that either \( s \) or \( t \) is an element of \( p\mathbb{Z} \), it follows from Lemma \ref{lemma4.3} that \( j_1 \equiv j_2 \equiv j_{p+2} \pmod{r} \), with at least two of the indices \( j_1, j_2, j_{p+2} \) being identical. Note that \( ( a_1 + p k_{j_1,1}) \equiv (a_1 + p k_{j_2,1}) \pmod{p} \), and \( l_{p+2} a_1 + p k_{j_{p+2},1} \notin p\mathbb{Z} \). It follows from \eqref{4.8} that \( j_1 \), \( j_2 \), and \( j_{p+2} \) cannot all be equal. If \( j_1 = j_2 \), then \( j_1 \neq j_{p+2} \). Since \( j_1 \equiv j_{p+2} \pmod{r} \), there exists \( h_{p+2} \in \mathbb{Z} \setminus \{ 0 \} \) such that \( j_1 - j_{p+2} = h_{p+2} r \). Using \( \rho^{-1} =( \frac{t}{s})^{\frac{1}{r}} \) in conjunction with \eqref{4.8} and \eqref{4.9}, we arrive at the conclusion that
	\[
	c\rho=\kappa=\frac{\left(a_{2}+pk_{j_{1},2}\right)-\rho^{j_{1}-j_{p+2}}\left(l_{p+2}a_{2}+pk_{j_{p+2},2}+a_{2}+pk_{j_{2},2}\right)}{(j_{p+2}-j_{1})\left(a_{1}+pk_{j_{1},1}\right)} \in \mathbb{Q}.
	\]
	This leads to a contradiction since \( \kappa \notin \mathbb{Q} \). Similarly, if \( j_1 = j_{p+2} \) or \( j_2 = j_{p+2} \), we can also derive a contradiction. Therefore, based on the above reasoning, there can be at most \( p \) mutually orthogonal exponential functions in \( L^2(\mu_{M, D}) \).
	
	Let \( k \in \mathbb{N} \), and let
	\[
	\Lambda = {M^*}^{k}\left\{\bm{0},\frac{\bm{a}}{p},
	\frac{2\bm{a}}{p},
	\cdots,\frac{(p-1)\bm{a}}{p}
	\right\}.
	\]
	It is straightforward to verify that \( (\Lambda - \Lambda) \setminus \{\bm{0}\} \subseteq \mathcal{Z}(\widehat{\mu}_{M,D}) \). Therefore, having \( p \) elements is optimal, and the result follows.
	
	(ii) Using a proof similar to that in \textbf{Case I} of $(i)$, we can easily complete the proof of $(a)$, and will omit the details here. We will now prove $(b)$. Since \( \kappa \in \mathbb{Q} \) and \( s \in p\mathbb{Z} \), we express \( \kappa = \frac{v}{u} \) for some \( u \in \mathbb{N} \), \( v \in \mathbb{Z} \) with \( \gcd(u, v) = 1 \), and let \( s = p^{\ell} s_1 \) where \( \gcd(s_1, p) = 1 \). For any \( N \in \mathbb{N} \) and \( n \leq N \), we define
	\[
	\Lambda_{n}^{N} =
	\bigcup_{n=1}^{N}\left\{
	 {M^{*}}^{purn}\frac{t^{pu(N-n)}s_{1}^{pu(n-1)}}{p} \bm{a} \right\}.
	\]
	Note that \( t, s_1 \notin p\mathbb{Z} \). It follows easily that
	\begin{equation}\label{4.10}
		\Lambda_n^N \setminus \{\bm{0}\} \subset \mathcal{Z}(\widehat{\mu}_{M, D}).
	\end{equation}
	For any \( \lambda_1 \neq \lambda_2 \in \Lambda_{n}^{N} \), we define
	\[
	\lambda_1 = {M^{*}}^{purn_{1}}\frac{t^{pu(N-n_{1})}s_{1}^{pu(n_{1}-1)}}{p} \bm{a} \quad \text{and} \quad
	\lambda_2 = {M^{*}}^{purn_{2}}\frac{t^{pu(N-n_{2})}s_{1}^{pu(n_{2}-1)}}{p} \bm{a},
	\]
	where \( n_{1} > n_{2} \). From \( s = p^{\ell}s_{1} \) and \( t, s_{1} \notin p\mathbb{Z} \), it follows that
	\begin{align*}
		\lambda_1 - \lambda_2 &= {M^{*}}^{purn_{1}}\frac{t^{pu(N-n_{1})}s_{1}^{pu(n_{1}-1)}}{p} \bm{a}-{M^{*}}^{purn_{1}}{M^{*}}^{pur(n_{2}-n_{1})}\frac{t^{pu(N-n_{2})}s_{1}^{pu(n_{2}-1)}}{p} \bm{a}\\
		&= {M^{*}}^{purn_{1}}\frac{t^{pu(N-n_{1})}s_{1}^{pu(n_{1}-1)}}{p} \left(\bm{a}-p^{pu\ell(n_{1}-n_{2})}\begin{bmatrix} 1 & 0 \\ -vpr(n_{1}-n_{2}) & 1 \end{bmatrix} \bm{a}\right)\\
		&\subset\mathcal{Z}(\widehat{\mu}_{M, D}),
	\end{align*}
	which implies that
	\begin{equation}\label{4.11}
		\left(\Lambda_n^N - \Lambda_n^N\right) \setminus \{\bm{0}\}\subset \mathcal{Z}(\widehat{\mu}_{M, D}).
	\end{equation}
	Combining with \eqref{4.10} and \eqref{4.11}, we can conclude that for any \( N \), \( E_{\Lambda_n^N} \) is the orthogonal set of \( L^2(\mu_{M, D}) \), thus completing the proof.
\end{proof}
Next, we proceed to prove Theorem \ref{thm1.5}. From Proposition \ref{pro 3.4}, it follows that when $\mu_{M, D}$ is a spectral measure, $\rho = \frac{s}{t}$ with $\gcd(s, t) = 1$ and $p \mid t$. Furthermore, we can establish the following proposition.
\begin{pro}\label{pro7.1}
	Let \( \mu_{M,D} \) be given by \eqref{1.1}, \eqref{1.2} and \eqref{x1.3}, where $\rho_{1}=\rho_{2}=\rho = \frac{s}{t}$ and $c = \kappa \rho^{-1}$ for some $s, t \in \mathbb{N}$ with $\gcd(s, t) = 1$, $p \mid t$, and $\kappa \in \mathbb{Q}$. If $s > 1$, then $\mu_{M, D}$ is not a spectral measure.
\end{pro}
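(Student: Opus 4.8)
The plan is to argue by contradiction, transplanting the scheme of Proposition~\ref{pro 3.6} and Lemma~\ref{lemma 3.7} to the equal-eigenvalue setting. Since here $\rho_1=\rho_2$, the conjugation by $R$ (equivalently by $Q$) that produced the auxiliary measure $\mu''$ in Section~3 is unavailable, so the required tail decay must be extracted directly from the multidimensional Lemma~\ref{lem2.7} rather than from the one-dimensional Lemma~\ref{lemma 2.8}. Suppose $\mu_{M,D}$ is spectral with spectrum $\Lambda\ni\bm 0$. Writing $\mu_{M,D}=\delta_{M^{-1}D}*\delta_{M^{-2}D}*\cdots$ and computing the powers of $M^*$ (so that $(M^*)^{j}$ has first row $(\rho^{-j},0)$ and lower-left entry $j\kappa\rho^{-j}$, using $c=\kappa\rho^{-1}$), the first coordinate of $\mathcal Z(\widehat\delta_{M^{-j}D})=(M^*)^{j}\mathcal Z(\widehat\delta_D)$ lies in $\rho^{-j}\bigl(\tfrac{\{1,\dots,p-1\}}{p}+\mathbb Z\bigr)$, since $\gcd(a_1,p)=1$ makes $ia_1$ run over all nonzero residues mod $p$. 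Hence $\mu_{M,D}$ (which has infinite support since $\#D=p\ge2$) satisfies the hypotheses of Proposition~\ref{pro 3.4} with $\eta=1$, yielding $\rho=\tfrac{s}{t}$ with $t\in p\mathbb Z$, a selection map $\gamma$ and a set $\Xi'\subset\Xi^{\gamma}$ with $\Lambda^{(1)}=\rho^{-1}p^{-1}\gamma^*(\Xi')$, and a decomposition $\Lambda=\cup_n\Lambda_n$ into bi-zero sets $\Lambda_n=\{\lambda(\mathbf I):|\mathbf I|\le n,\ \mathbf I\in\Xi'\}$ of $\mu_n$. Exactly as in \eqref{3.3}, the selection-map structure forces $|\lambda_1(\mathbf I)|\ge \tfrac{t^{n}+1}{ps}$ whenever $|\mathbf I|\ge n+1$.

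The heart of the matter is a tail estimate uniform in $n$. Using $c=\kappa\rho^{-1}$ one gets $(M^*)^{-m}\xi=(\rho^{m}\xi_1,\ -m\kappa\rho^{m}\xi_1+\rho^{m}\xi_2)^*$, so that
\[
|\widehat\mu_{>n}(\xi)|=\prod_{j=1}^{\infty}f\bigl(\rho^{j}(\rho^{n}\xi_1),\,w_{n+j}\bigr),\qquad w_m=-m\kappa\rho^{m}\xi_1+\rho^{m}\xi_2,
\]
where $f(x_1,x_2)=|m_D(x_1,x_2)|$. The factor $m$ inside $w_m$ is precisely what prevents the product from being a product of a single function evaluated at $\rho^{j}x_1$; this is the obstruction that Lemma~\ref{lem2.7} is built to absorb, since its bound is uniform over every choice of the secondary variable. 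To apply it I verify its hypotheses for $f$: it is continuous, non-negative, bounded by $1$, and $\mathbb Z^2$-periodic because $D\subset\mathbb Z^2$, and its level set $\{f=1\}$ equals the dual lattice of $G=\langle D-D\rangle$. The discreteness of $\mathcal Z(\widehat\delta_D)=\cup_{j=1}^{p-1}(\tfrac{j\bm a}{p}+\mathbb Z^2)$ forbids $D$ from lying on a line, so $G$ has rank two; choosing linearly independent $v,w\in G\subset\mathbb Z^2$ and eliminating $x_2$ from $\langle v,x\rangle,\langle w,x\rangle\in\mathbb Z$ shows $\{f=1\}\subset\{x:\ x_1\in\tfrac1\gamma\mathbb Z\}$ with $\gamma=|\det(v,w)|$. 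Lemma~\ref{lem2.7} then supplies constants $\alpha,\beta>0$, independent of $n$, with $|\widehat\mu_{>n}(\xi)|\le\alpha(\ln|\rho^{n}\xi_1|)^{-\beta}$ for $|\rho^{n}\xi_1|>1$.

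Finally I would feed this estimate into Lemma~\ref{lemma 2.7}$(ii)$ along the lines of Proposition~\ref{pro 3.6}, with the factor $|d_{1,1}|$ there set to $1$. Fix $\xi\in[0,\tfrac1{ps}]^2$ (so $|\xi_i|<\tfrac13$ as $ps\ge4$), take $K>\beta^{-1}$ and $\alpha_n=n^{K}$. For $\alpha_n<|\mathbf I|\le\alpha_{n+1}$ the lower bound on $|\lambda_1(\mathbf I)|$ gives $|\xi_1+\lambda_1(\mathbf I)|\ge\tfrac{t^{n^{K}}}{ps}$, whence
\[
|\widehat\mu_{>\alpha_{n+1}}(\xi+\lambda(\mathbf I))|\le\alpha\Bigl(\ln\bigl(\rho^{(n+1)^{K}}\tfrac{t^{n^{K}}}{ps}\bigr)\Bigr)^{-\beta}.
\]
The decisive point, and the only place $s>1$ enters, is that
\[
\ln\bigl(\rho^{(n+1)^{K}}t^{n^{K}}/(ps)\bigr)=((n+1)^{K}-1)\ln s-((n+1)^{K}-n^{K})\ln t-\ln p
\]
tends to $+\infty$, because $((n+1)^{K}-1)\ln s\sim n^{K}\ln s$ dominates $((n+1)^{K}-n^{K})\ln t\sim Kn^{K-1}\ln t$ exactly when $\ln s>0$; if $s=1$ this quantity would tend to $-\infty$ and the bound would be vacuous. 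Consequently the right-hand side is at most $C/n$ for large $n$, and since $\sum_n(C/n)^2<\infty$, Lemma~\ref{lemma 2.7}$(ii)$ shows that $\Lambda$ cannot be a spectrum of $\mu_{M,D}$, a contradiction. The main obstacle is thus the uniform tail decay of the middle paragraph; once the second-coordinate drift $-m\kappa\rho^{m}\xi_1$ is shown to be harmless through Lemma~\ref{lem2.7}, the remainder is the now-standard comparison via Lemma~\ref{lemma 2.7}$(ii)$.
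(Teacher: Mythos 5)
Your argument is correct, and it is more than the paper actually provides: the paper disposes of Proposition \ref{pro7.1} in one line by citing the proof of Proposition 3.2 of \cite{CLY}, whereas you give a self-contained proof built from the paper's own machinery. Your route is the natural adaptation of Proposition \ref{pro 3.6} and Lemma \ref{lemma 3.7} to the Jordan-block case: Proposition \ref{pro 3.4} (applied with $\eta=1$, legitimately, since $\gcd(a_1,p)=1$ makes the first coordinate of $(M^*)^j\mathcal{Z}(\widehat{\delta}_D)$ land in $\rho^{-j}(\tfrac{\{1,\dots,p-1\}}{p}+\mathbb{Z})$) supplies the selection-map lower bound \eqref{3.3}, and the genuinely new step is your observation that the second-coordinate drift $-m\kappa\rho^{m}\xi_1$ produced by $(M^*)^{-m}$ rules out the one-dimensional Lemma \ref{lemma 2.8} but is exactly absorbed by Lemma \ref{lem2.7}, whose bound is uniform over the secondary variable. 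Your verification of the hypotheses of Lemma \ref{lem2.7} is sound: $|m_D|$ is $\mathbb{Z}^2$-periodic since $D\subset\mathbb{Z}^2$, and the level set $\{|m_D|=1\}$ is the dual lattice of $\langle D-D\rangle$, which has rank two because a collinear $D$ would force $\mathcal{Z}(\widehat{\delta}_D)$ to contain full lines, contradicting \eqref{x1.3}; hence the first coordinates of that level set lie in $\gamma^{-1}\mathbb{Z}$. The concluding comparison via Lemma \ref{lemma 2.7}$(ii)$ with $\alpha_n=n^K$, $K>\beta^{-1}$, is the paper's own computation from Proposition \ref{pro 3.6} and is carried out correctly. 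One small expository quibble: $s>1$ is not used \emph{only} in the final divergence of $\ln\bigl(\rho^{(n+1)^K}t^{n^K}/(ps)\bigr)$ — it is also a standing hypothesis of Lemma \ref{lem2.7} itself — but since $s>1$ is assumed throughout, this does not affect the proof.
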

\begin{proof}
The proof of this proposition follows by a straightforward adaptation of the proof of Proposition 3.2 in \cite{CLY}, and we omit the details here.
\end{proof}
\begin{lemma}\label{lemma 4.1}
Let $\mu_{M,D}$ be defined by \eqref{1.1}, \eqref{1.2} and \eqref{x1.3}, assume that $\rho_{1} = \rho_{2} = \rho^{-1} = t \in p\mathbb{Z}$, $c = a't$, and $a' = \frac{v}{u} \in \left\{ \frac{k}{l} : \gcd(k, l) = 1 \text{ and } k \in \mathbb{Z} \right\}$, where $\gcd(v, u) = 1$ and $u = p^{\ell_1} u' \in p^{\ell_1} (\mathbb{Z} \setminus p\mathbb{Z})$ for some $\ell_1 \in \mathbb N$. If \( p^{\ell_1 + 1} \mid t \), then \( \mu_{M, D} \) is a spectral measure.
\end{lemma}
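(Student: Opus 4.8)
The plan is to produce an integer-matrix conjugate of $(M,D)$ that carries a genuine Hadamard triple, so that spectrality follows immediately from Lemma~\ref{lem2.3}. Since replacing $D$ by $D-d_{0}$ changes $\mu_{M,D}$ only by a translation and leaves $|\widehat{\delta}_{D}|$ (hence $\mathcal{Z}(\widehat{\delta}_{D})$) unchanged, we may assume $\bm{0}\in D$. Put $\tau=t/p$ and $S=\diag(u',1)$. Because $p^{\ell_{1}+1}\mid t$ we have $p^{\ell_{1}}\mid t$, so
\[
M_{1}:=SMS^{-1}=\begin{bmatrix} t & vt/p^{\ell_{1}}\\ 0 & t\end{bmatrix}\in M_{2}(\mathbb{Z})
\]
is expanding, while $D_{1}:=SD\subset\mathbb{Z}^{2}$ with $\bm{0}\in D_{1}$. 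As similarity transformations preserve spectrality \cite{DH1}, it suffices to show that $\mu_{M_{1},D_{1}}$ is spectral, and for this I would build $L_{1}\subset\mathbb{Z}^{2}$ so that $(M_{1},D_{1},L_{1})$ is a Hadamard triple in the sense of Definition~\ref{de2.3}.

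The candidate is $L_{1}=\{\,j\bm{b}:0\le j\le p-1\,\}$ for a single vector $\bm{b}=(b_{1},b_{2})^{*}\in\mathbb{Z}^{2}$ to be chosen. Writing $\bm{g}=S\bm{b}=(u'b_{1},b_{2})^{*}$ and using $\mathcal{Z}(\widehat{\delta}_{D_{1}})=S^{-1}\mathcal{Z}(\widehat{\delta}_{D})$ together with the identity $S M_{1}^{-*}=M^{-*}S$, the Hadamard orthogonality requirement $M_{1}^{-*}(j-j')\bm{b}\in\mathcal{Z}(\widehat{\delta}_{D_{1}})$ for $j\neq j'$ reduces to the single membership $M^{-*}\bm{g}\in\frac{\bm{a}}{p}+\mathbb{Z}^{2}$; indeed $(j-j')\bm{a}$ is then a nonzero multiple of $\bm{a}$ modulo $p$, so $\frac{(j-j')\bm{a}}{p}+\mathbb{Z}^{2}\subset\mathcal{Z}(\widehat{\delta}_{D})$ by \eqref{x1.3}. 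Computing $M^{-*}\bm{g}=\big(g_{1}/t,\;g_{2}/t-a'g_{1}/t\big)^{*}$, the first coordinate forces $g_{1}=\tau a_{1}+tm$ for some $m\in\mathbb{Z}$, and the second forces $g_{2}=\tau a_{2}+a'g_{1}+tm'$ for some $m'\in\mathbb{Z}$.

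The crux --- and the step I expect to be the main obstacle --- is to make both coordinates of $\bm{b}$ integers simultaneously. The constraint $u'\mid g_{1}$ can always be met because $\gcd(p,u')=1$ lets one solve $\tau a_{1}+tm\equiv 0\pmod{u'}$ for $m$. The genuinely delicate requirement is integrality of $g_{2}$, i.e.\ $a'g_{1}=vb_{1}/p^{\ell_{1}}\in\mathbb{Z}$, equivalently $p^{\ell_{1}}\mid b_{1}$; since $\gcd(u',p)=1$ this is $p^{\ell_{1}}\mid g_{1}=\tau a_{1}+tm$, which holds precisely because $p^{\ell_{1}}\mid\tau$ (this is where $p^{\ell_{1}+1}\mid t$, rather than merely $p^{\ell_{1}}\mid t$, is used) together with $\gcd(a_{1},p)=1$. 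With such a $\bm{b}\neq\bm{0}$ fixed, the $p$ vectors $j\bm{b}$ are distinct and, by the reduction above, $\sum_{d\in D_{1}}e^{2\pi i\langle M_{1}^{-1}d,\,(j-j')\bm{b}\rangle}=0$ for $j\neq j'$; combined with $\bm{0}\in D_{1}\cap L_{1}$ and $\#D_{1}=\#L_{1}=p$, this makes the associated matrix unitary. Thus $(M_{1},D_{1},L_{1})$ is a Hadamard triple, and Lemma~\ref{lem2.3} yields that $\mu_{M_{1},D_{1}}$, hence $\mu_{M,D}$, is a spectral measure.
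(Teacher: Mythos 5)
Your proposal is correct and follows essentially the same route as the paper: both conjugate by $\diag(u',1)$ to get the integer matrix $M_{1}=\begin{bmatrix} t & vt/p^{\ell_{1}}\\ 0 & t\end{bmatrix}$, take $L_{1}$ to be the $p$ multiples of a single integer vector, verify that $(M_{1},D_{1},L_{1})$ is a Hadamard triple, and invoke Lemma~\ref{lem2.3}. The only difference is cosmetic: the paper writes the generating vector explicitly (namely $\bigl(\tfrac{ta_{1}}{p},\tfrac{vta_{1}+p^{\ell_{1}}u'ta_{2}}{p^{\ell_{1}+1}}\bigr)^{*}$, for which $M^{-*}S\bm{b}\in\tfrac{u'\bm{a}}{p}+\mathbb{Z}^{2}$), whereas you derive yours from the congruence conditions and normalize to $M^{-*}\bm{g}\in\tfrac{\bm{a}}{p}+\mathbb{Z}^{2}$; your verification that $p^{\ell_{1}+1}\mid t$ is exactly what makes $g_{2}$ integral is the "by calculation" step the paper leaves implicit.
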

\begin{proof}
	Let \( P = \begin{bmatrix} u' & 0 \\ 0 & 1 \end{bmatrix} \). Then, we can write
	$M_{1}=PMP^{-1}=\begin{bmatrix} t & \frac{vt}{p^{\ell_{1}}} \\ 0 & t \end{bmatrix}\;\text{and}\; D_{1}=PD.$
	The spectral properties of \( \mu_{M, D} \) and \( \mu_{M_1, D_1} \) are identical, and
\[ \mathcal{Z}(m_{D_{1}})={P^{*}}^{-1}\mathcal{Z}(m_{D})={P^{*}}^{-1}\bigcup_{j=1}^{p-1}\left(\frac{j\bm{a}+p\mathbb{Z}^{2}}{p}\right), \]
	where $\bm{a}=(a_{1},a_{2})^{*}.$
	Define
	\[ L_{1} := \left\{
	\begin{pmatrix} 0 \\ 0 \end{pmatrix},
	\begin{pmatrix} \frac{ta_{1}}{p} \\ \frac{vta_{1} + p^{\ell_{1}}u'ta_{2}}{p^{\ell_{1}+1}} \end{pmatrix},
	2\begin{pmatrix} \frac{ta_{1}}{p} \\ \frac{vta_{1} + p^{\ell_{1}}u'ta_{2}}{p^{\ell_{1}+1}} \end{pmatrix}, \cdots,
	(p-1)\begin{pmatrix} \frac{ta_{1}}{p} \\ \frac{vta_{1} + p^{\ell_{1}}u'ta_{2}}{p^{\ell_{1}+1}} \end{pmatrix}
	\right\}. \]
	Since \( p^{\ell_1 + 1} \mid t \), it follows that \( L_1 \subset \mathbb{Z}^2 \). By calculation, we find that \( (M_1, D_1, L_1) \) forms a Hadamard triple. Therefore, by Lemma \ref{lem2.3}, it follows that \( \mu_{M_1, D_1} \) is a spectral measure. This completes the proof.
\end{proof}
\begin{lemma}\label{lemma 4.2}
	Suppose that \( \rho_{1}^{-1} =\rho_{2}^{-1} = t \in p\mathbb{Z} \) and \( \lambda_j \in \mathcal{L}_{i_j} \) for \( j = 1, 2 \). If \( \lambda_2 - \lambda_1 \in \mathcal{Z}(\widehat{\mu}_{M,D}) \), then \( \lambda_2 - \lambda_1 \in \mathcal{L}_i:=\mathcal{Z}(\widehat{\delta}_{M^{-i}D}) \), where \( i = \min\{i_1, i_2\} \) if \( i_1 \neq i_2 \), and \( i \geq i_1 \) if \( i_1 = i_2 \).
\end{lemma}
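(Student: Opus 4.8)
The plan is to collapse this two-dimensional membership problem to a one-dimensional count of powers of $p$ in the first coordinate, exploiting that the upper-triangular form of $M$ makes that coordinate independent of $c$. First I would record the explicit shape of $\mathcal{L}_i$. Since $\rho_1^{-1}=\rho_2^{-1}=t$ we have $M^*=\begin{bmatrix} t & 0 \\ c & t\end{bmatrix}$, so by induction $(M^*)^i=\begin{bmatrix} t^i & 0 \\ ict^{i-1} & t^i\end{bmatrix}$. Using $\widehat{\delta}_{M^{-i}D}(\xi)=\widehat{\delta}_D((M^*)^{-i}\xi)$ together with \eqref{x1.3} gives $\mathcal{L}_i=(M^*)^i\bigcup_{j=1}^{p-1}\bigl(\tfrac{j\bm a}{p}+\mathbb{Z}^2\bigr)$, and the key observation is that the first coordinate of any element of $\mathcal{L}_i$ equals $\tfrac{t^i}{p}(ja_1+pk_1)$ for some $j\in\{1,\dots,p-1\}$ and $k_1\in\mathbb{Z}$, with no contribution from $c$. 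Because $\bm a\in\mathcal{E}_p$ forces $\gcd(a_1,p)=1$, the integer $ja_1+pk_1$ lies in $\mathbb{Z}\setminus p\mathbb{Z}$; in particular this coordinate is always nonzero.

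Next I would use the hypothesis $\lambda_2-\lambda_1\in\mathcal{Z}(\widehat{\mu}_{M,D})=\bigcup_{m\geq1}\mathcal{L}_m$ to fix an index $m\geq1$ with $\lambda_2-\lambda_1\in\mathcal{L}_m$. Writing the first coordinates of $\lambda_1,\lambda_2$ and of $\lambda_2-\lambda_1$ in the form above, say $\tfrac{t^{i_1}}{p}A_1$, $\tfrac{t^{i_2}}{p}A_2$ and $\tfrac{t^m}{p}A'$ with $A_1,A_2,A'\in\mathbb{Z}\setminus p\mathbb{Z}$, and equating first coordinates, I obtain the single integer identity
\[
t^{i_2}A_2-t^{i_1}A_1=t^mA'.
\]
Everything then reduces to counting powers of $p$ in this identity. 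Let $\alpha\geq1$ denote the exact exponent of $p$ in $t$, which is positive since $t\in p\mathbb{Z}$; as $p\nmid A_1,A_2,A'$, the summand $t^{i_s}A_s$ is divisible by exactly $p^{i_s\alpha}$ and the right-hand side by exactly $p^{m\alpha}$.

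I would then split into the two cases of the statement. If $i_1\neq i_2$, the terms $t^{i_1}A_1$ and $t^{i_2}A_2$ carry distinct powers of $p$, so the exact power dividing the left-hand side is $p^{\min\{i_1,i_2\}\alpha}$; matching exponents forces $m=\min\{i_1,i_2\}=:i$, whence $\lambda_2-\lambda_1\in\mathcal{L}_i$. If $i_1=i_2=:i_0$, the identity becomes $t^{i_0}(A_2-A_1)=t^mA'$, where $A_2\neq A_1$ because otherwise the first coordinate of $\lambda_2-\lambda_1$ would vanish, contradicting that elements of $\mathcal{L}_m$ have nonzero first coordinate; thus $A_2-A_1$ is a nonzero integer, the left-hand side is divisible by $p^{i_0\alpha}$, and comparison gives $m\alpha\geq i_0\alpha$, i.e. $m\geq i_1$, so $\lambda_2-\lambda_1\in\mathcal{L}_i$ with $i=m\geq i_1$.

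The substantive step is the opening reduction: realizing that the first coordinate of $(M^*)^i$ is insensitive to $c$, which turns the problem into a clean comparison of $p$-adic valuations. Once that is in place, both cases are immediate, the only point needing care being the exclusion of $A_2=A_1$ in the equal-index case, which is dispatched by the nonvanishing of first coordinates throughout the sets $\mathcal{L}_m$.
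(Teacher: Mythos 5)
Your proof is correct and follows essentially the same route as the paper: both arguments project onto the first coordinate (which is unaffected by $c$ since $(M^*)^i$ is lower triangular with diagonal $t^i$) and then compare divisibility by powers of $p$, using that the residues $ja_1+pk_1$ lie in $\mathbb{Z}\setminus p\mathbb{Z}$. Your phrasing via exact $p$-adic valuations is a minor stylistic variant of the paper's ``divisible by $p$ versus not divisible by $p$'' contradiction, not a genuinely different method.
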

\begin{proof}
	Based on the structure of $\mathcal{L}_{j}$, we can assume
	\[
	\lambda_j = \frac{t^{i_{j}}}{p} {\left({\begin{array}{*{20}{c}}
				s_{j,1}+pk_{j,1}\\
				i_{j}a'\left(s_{j,1}+pk_{j,1}\right)+s_{j,2}+pk_{j,2}\\
		\end{array}}\right)} \in\mathcal{L}_{i_j}
	\]
	for some \( k_{j}=(k_{j,1}, k_{j,2})^{*},s_{j}=(s_{j,1},s_{j,2})^* \in \mathbb{Z}^2 \). Then, \[
	\lambda_2 - \lambda_1 = \frac{t^{i_{1}}}{p} {\left({\begin{array}{*{20}{c}}
				t^{i_{2}-i_{1}}\left(s_{2,1}+pk_{2,1}\right)-s_{1,1}-pk_{1,1}\\
				t^{i_{2}-i_{1}}\left(i_{j}a'(s_{2,1}+pk_{2,1})+s_{2,2}+pk_{2,2}\right)-i_{j}a'(s_{1,1}+pk_{1,1})-s_{1,2}-pk_{1,2}\\
		\end{array}}\right)} .
	\]
	If \( i_1 = i_2 \) and \( i < i_1 \), the first component of the above vector is given by
	\[
	\frac{t^{i_1}}{p}\left(s_{2,1} + pk_{2,1} - s_{1,1} - pk_{1,1}\right) = \frac{t^i}{p} \left( s_{3,1} +p k_{3,1} \right) \; \text{for some} \; s_{3,1} \in \{1, \cdots, p-1\},
	\]
	then
	\[
	t^{i_1 - i}\left(s_{2,1} + pk_{2,1} - s_{1,1} - pk_{1,1}\right) =  s_{3,1} + pk_{3,1}.
	\]
	Since \( p \mid t \), the left-hand side is divisible by \( p \), while the right-hand side belongs to \( \mathbb{Z} \setminus p\mathbb{Z} \), leading to a contradiction. Therefore, we conclude that \( i \geq i_1 \). Similarly, we assume that \( i_2 > i_1 \), so \( \lambda_2 - \lambda_1 \in \mathcal{L}_i \) for some \( i \geq i_1 \). If \( i > i_1 \), then
	\[
	t^{i_2 - i_1} \left( s_{2,1} + pk_{2,1} \right) - s_{1,1} - pk_{1,1}= t^{i - i_1} \left( s_{3,1} + pk_{3,1} \right) \in p\mathbb{Z},
	\]
	which is a contradiction. Therefore, we must have \( i = i_1 \).
\end{proof}
Before proving Theorem \ref{thm1.5}, we define $\mathcal{L}_{j}=\mathcal{Z}(\widehat{\delta}_{M^{-j}D})$ in the same manner as in Lemma \ref{lemma 4.2}. Thus, $\mathcal{Z}(\widehat{\mu}_{M, D})=\cup_{j=1}^{\infty}\mathcal{L}_{j}$.
\begin{proof}[\textbf{Proof of Theorem \ref{thm1.5}}]
	The sufficiency follows directly from Lemma  \ref{lemma 4.1}, and we now need to consider the necessity. If \( \mu_{M, D} \) is a spectral measure, then by Proposition \ref{pro7.1}, \( \rho^{-1} = t \in p^{\ell_1} (\mathbb{Z} \setminus p\mathbb{Z}) \), where \( \ell_1 \geq 1 \). If the necessity does not hold, then either \( a' \notin \mathbb{Q} \) or \( a' = \frac{v}{u} \), where \( u \in p^{\ell_1} \mathbb{Z} \) and \( \gcd(v, u) = 1 \). Let \( \Lambda \) be the spectrum of \( \mu_{M,D} \). From Lemma \ref{lemma 2.2}, it follows that the intersection of \( (\Lambda - \Lambda) \) and \( \mathcal{L}_2 \) is non-empty, i.e., \( (\Lambda - \Lambda) \cap \mathcal{L}_2 \neq \emptyset \). Without loss of generality, we assume that \( \bm{0} \in \Lambda \) and \( \Lambda \cap \mathcal{L}_2 \neq \emptyset \). Lemma \ref{lemma 4.2} then imply that \( \Lambda \cap \mathcal{L}_1 \neq \emptyset \). Furthermore, since \( (\Lambda - \Lambda) \setminus \{\bm{0}\} \subset \cup_{j=1}^{\infty} \mathcal{L}_j \), it follows that there exist \( \lambda_1 \in \mathcal{L}_1 \cap \Lambda \) and \( \lambda_2 \in \mathcal{L}_2 \cap \Lambda \) such that \( \lambda_2 - \lambda_1 \in \mathcal{L}_1 \). Denote
	\[
	\lambda_i = \frac{t^{i}}{p} {\left({\begin{array}{*{20}{c}}
				s_{i,1}+pk_{i,1}\\
				ia'\left(s_{i,1}+pk_{i,1}\right)+s_{i,2}+pk_{i,2}\\
		\end{array}}\right)}
	\]
	for some \( k_{i}=(k_{i,1}, k_{i,2})^{*},s_{i}=(s_{i,1}, s_{i,2})^{*} \in \mathbb{Z}^{2} \), $i=1,2$. Then, \[
	\lambda_2 - \lambda_1 = \frac{t}{p} {\left({\begin{array}{*{20}{c}}
				t\left(s_{2,1}+pk_{2,1}\right)-\left(s_{1,1}+pk_{1,1}\right)\\
				2ta'\left(s_{2,1}+pk_{2,1}\right)+t\left(s_{2,2}+pk_{2,2}\right)-\left(a'\left(s_{1,1}+pk_{1,1}\right)+s_{1,2}+pk_{1,2}\right)\\
		\end{array}}\right)}.
	\]
	By examining the second component of the above expression and the structure of \( \mathcal{L}_1 \), we conclude that
	\begin{equation*}
		t\frac{v}{u}\left(s_{2,1}+pk_{2,1}\right)+t\left(s_{2,2}+pk_{2,2}\right)=s_{3,2}+s_{1,2}+pk_{3,2}+pk_{1,2}
	\end{equation*}
	for some $s_{3,2},k_{3,2}\in\mathbb{Z}. $ However, since \( s_{3,2}+s_{1,2} \equiv 0 \pmod{p} \), it follows that \( t \frac{v}{u}( s_{2,1} +p k_{2,1}) \in p\mathbb{Z} \). Thus, \( p^{\ell_1+1} \mid t \), leading to a contradiction. Therefore, the proof of necessity is complete.
\end{proof}

\section{ The Case $\rho_{1}\neq\rho_2$ }

In this section, we focus on proving Theorems \ref{thm1.3} and \ref{thm1.4}, which investigate the spectral properties of the measure $\mu_{M, D}$ defined by conditions \eqref{1.1}, \eqref{1.2}, \eqref{x1.3} and \eqref{eq1.3} when $\rho_{1}\neq\rho_2$. As established by Theorem \ref{thm1.2}, the spectrality of \(\mu_{M, D}\) necessarily implies that \(c''\) must be a rational number, i.e., \(c'' \in \mathbb{Q}\). Recall that  \begin{equation*}
	E_{\bm{a}}=\left\{ \frac{h}{l} : la_{2}-ha_{1} \in \mathbb{Z}\backslash p\mathbb{Z}, \gcd(h, l) = 1 \right\} \cup \{0\}
\end{equation*}
and
\begin{equation*}
	D = \left\{
	\begin{pmatrix} 0 \\ 0 \end{pmatrix},
	\begin{pmatrix} d_{1,1} \\ 0 \end{pmatrix}, \cdots,
	\begin{pmatrix} d_{p-1,1} \\ d_{p-1,2} \end{pmatrix}
	\right\}.
\end{equation*}
Let $R'=\begin{bmatrix}
	1 & c'' \\
	0 & 1
\end{bmatrix}$, $\widetilde{D}=R'D$. Then
\begin{equation*}
		\widetilde{M} := R'MR'^{-1} = \begin{bmatrix} \rho_1^{-1} & 0 \\ 0 & \rho_2^{-1} \end{bmatrix}, \quad
		\mathcal{Z}(\widehat{\delta}_{\widetilde{D}}) = (R^{'*})^{-1} \bigcup_{j=1}^{p-1} \left( \frac{j\bm{a} + p\mathbb{Z}^2}{p} \right).
\end{equation*}
Consequently, \( \mu_{M, D} \) and \( \mu_{\widetilde{M}, \widetilde{D}} \) share the same spectral properties. Define
\begin{equation*}
	\widetilde{\mu}_{n}=\delta_{{\widetilde{M}^{-1} \widetilde{D}}} *\delta_{{\widetilde{M}^{-2} \widetilde{D}}} * \cdots*\delta_{{\widetilde{M}^{-n} \widetilde{D}}}, \quad
\widetilde{\mu}_{>n}=\delta_{{\widetilde{M}^{-(n+1)} \widetilde{D}}} *\delta_{{\widetilde{M}^{-(n+2)} \widetilde{D}}} * \cdots,
\end{equation*}
and
\(\widetilde{\mathcal{Z}}_j = {\widetilde{M}}^{*^j} \mathcal{Z}(m_{\widetilde{D}}).\)
\begin{proof}[\textbf{Proof of Theorem \ref{thm1.3}}]
	The necessity is directly established by Lemma \ref{lemma 3.7}. We now prove the sufficiency.
	Let $\rho_1^{-1} = t \in p\mathbb{Z}$, $c'' = \frac{c_1}{c_2} \in \mathbb{Q}\backslash E_{\bm{a}}$ with $\gcd(c_{1},c_{2})=1$. Define
	\[
	\mathcal{B} = \left\{ c_2 d_{0,1} + c_1 d_{0,2}, \cdots, c_2 d_{p-1,1} + c_1 d_{p-1,2} \right\},
	\;
\text{	and }
\;
	\mathcal{B}_n = \mathcal{B} + q \mathcal{B} + \cdots + q^{n-1} \mathcal{B}.
	\]
From Lemma \ref{lemma 2.9}, we know that \(\mathcal{B} = \{0, 1, \cdots, p-1\} \pmod{p}.\)	Let
\[  L = \left\{ 0, \frac{t}{p}, \frac{2t}{p}, \cdots, \frac{(p-1)t}{p} \right\} \subset \mathbb{Z}. \]
Then the triplet $(t, \mathcal{B}, L)$ forms a Hadamard triple. By \cite[Theorem 5.4]{DH1}, we know the associated self-similar measure \( \mu_{t, \mathcal{B}} \) is a spectral with a spectrum in $\mathbb Z$. Denote \( L_n = L + tL + t^2 L + \cdots + t^{n-1} L \) and
	\[
	\Lambda = \bigcup_{k=1}^{\infty} \Lambda_k \quad \text{with} \quad
	\Lambda_k = J_{n_1} + t^{\ell_1} J_{n_2} + \cdots + t^{\ell_{k-1}} J_{n_k},
	\]
	where \( 0 \in J_n \equiv L_n \pmod{t^n} \) and \( \ell_k = n_1 + n_2 + \cdots + n_k \).
By \cite[Proposition 4.6]{DH1}, we obtain a spectrum \( \Lambda \) of \( \mu_{t, \mathcal{B}} \)  and
	\[
	\delta(\Lambda):= \inf_{k \geq 1} \inf_{\lambda \in \Lambda_k} \left| \widehat{\mu}_{t, \mathcal{B}} \left( t^{-\ell_k} \lambda \right) \right|^2 > 0.
	\]
	This shows that
	\begin{equation*}
		\prod_{j=1}^{\infty}\frac{1}{p^{2}}	\left|\sum_{l=0}^{p-1}e^{-2\pi i(c_{2}d_{l,1}+c_{1}d_{l,2})\lambda t^{-\ell_{k}-j}}\right|^{2}\ge \delta(\Lambda)>0,\;\lambda\in\Lambda_{k}.
	\end{equation*}
	Define \( \Lambda'_k = \{(c_2 \lambda, 0)^* : \lambda \in \Lambda_k \} \) and \( \Lambda' = \cup_{k=1}^{\infty} \Lambda'_k \). For any distinct \( \lambda_1', \lambda_2' \in \Lambda'_k \), since \( \lambda_1 - \lambda_2 \in \frac{1}{p} t^\ell (\mathbb{Z} \setminus p\mathbb{Z}) \) with \( \ell \leq \ell_k \), it can be concluded that
	\[
	\lambda_{1}' - \lambda_{2}' = (c_2 (\lambda_{1} - \lambda_{2}), 0)^* \in \left\{ \left( \frac{c_2}{p} t^\ell (s + pk), 0 \right)^* :  k \in \mathbb{Z}, s = 1,2,\cdots,p-1\right\} .
	\]
	Note that
	\begin{align*}
		\begin{bmatrix}
			t^{\ell} & 0 \\
			0 & \rho_2^{-\ell}
		\end{bmatrix}\mathcal{Z}(m_{R'D})&=\begin{bmatrix}
			t^{\ell} & 0 \\
			0 & \rho_2^{-\ell}
		\end{bmatrix}\begin{bmatrix}
			1 & 0 \\
			-c'' & 1
		\end{bmatrix}\bigcup_{j=1}^{p-1}\left(\frac{j\bm{a}+p\mathbb{Z}^{2}}{p}\right)\\
		&=\bigcup_{j=1}^{p-1}{\left({\begin{array}{*{20}{c}}
					t^{\ell}\left(\frac{ja_{1}}{p}+z_{1}\right)\\
					\rho_2^{-\ell}\left(\frac{j(-c''a_{1}+a_{2})}{p}-c''z_{1}+z_{2}\right)
			\end{array}}\right)}
	\end{align*}
	for any $z=(z_{1},z_{2})^{*}\subset\mathbb{Z}^{2}$. Given that \( \gcd(c_2, p) = 1 \) and \( c'' \in \mathbb{Q}\backslash E_{\bm{a}} \), it follows that
	$ \lambda_{1}' - \lambda_{2}' \subset\mathcal{Z}(\widehat{\widetilde{\mu}}_{\ell_{k}}).$
	Therefore, it is not hard to conclude that \( \Lambda'_k \) is a spectrum of \( \widetilde{\mu}_{\ell_{k}} \). Moreover, since for any \( \lambda'= (c_2 \lambda, 0)^* \in \Lambda'_k \), we have
	\begin{equation*}
		\left|\widehat{\widetilde{\mu}}_{>\ell_k}(\lambda')\right|^2=\prod_{j=1}^{\infty}\frac{1}{p^{2}}	\left|\sum_{l=0}^{p-1}e^{-2\pi i\left(c_{2}d_{l,1}+c_{1}d_{l,2}\right)\lambda t^{-\ell_{k}-j}}\right|^{2}\ge \delta(\Lambda)>0,\;\lambda\in\Lambda_{k},
	\end{equation*} the equicontinuity of the sequence \( \{\widehat{\widetilde{\mu}}_{>\ell_k}\}_{k=1}^{\infty} \) implies that
	\[
	\left|\widehat{\widetilde{\mu}}_{>\ell_k}(\lambda' + \xi)\right|^2 \geq \frac{1}{2} \delta(\Lambda), \quad \xi \in [0, \gamma]^2 \text{ for some } \gamma > 0.
	\]
	Thus, \( \Lambda' \) is the spectrum of \( \mu_{\widetilde{M}, \widetilde{D}} \), as established in Lemma \ref{lemma 2.7} (ii).
\end{proof}
Next, we consider the case \( c''\in E_{\bm{a}} \).
\begin{pro}\label{pro 5.1}
	If \( \mu_{\widetilde{M}, \widetilde{D}} \) is a spectral measure and \( c'' \in E_{\bm{a}} \), then \( \rho_1^{-1}, \rho_2^{-1} \in p \mathbb{Z} \).
\end{pro}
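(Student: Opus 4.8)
The plan is to deduce $\rho_2^{-1}\in p\mathbb Z$ by transporting the first-coordinate theory (Propositions \ref{pro 3.4} and \ref{pro 3.6}) to the second coordinate via a coordinate flip, the point being that $c''\in E_{\bm a}$ is exactly the condition which makes the second coordinate of $\mathcal Z(\widehat\mu_{\widetilde M,\widetilde D})$ carry the same $S_j$-structure that the first coordinate already has. Since $\mu_{\widetilde M,\widetilde D}$ and $\mu_{M,D}$ have identical spectrality, Lemma \ref{lemma 3.7} already gives $\rho_1^{-1}\in p\mathbb Z$, so only $\rho_2^{-1}\in p\mathbb Z$ remains. Let $\sigma(x_1,x_2)=(x_2,x_1)$ and put $\mu^{\sigma}:=\sigma_{*}\mu_{\widetilde M,\widetilde D}$, which is the self-affine measure for $A:=\diag(\rho_2^{-1},\rho_1^{-1})$ with digit set $\sigma\widetilde D=\{(d_{k,2},\,d_{k,1}+c''d_{k,2})^{*}\}_{k=0}^{p-1}$. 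As $\sigma$ is orthogonal, $\mu^{\sigma}$ is spectral if and only if $\mu_{\widetilde M,\widetilde D}$ is \cite{DH1}, and the first coordinate of $\mathcal Z(\widehat{\mu^{\sigma}})$ is precisely the second coordinate of $\mathcal Z(\widehat\mu_{\widetilde M,\widetilde D})$.

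First I would verify that $\mu^{\sigma}$ satisfies the hypotheses of Proposition \ref{pro 3.4}, and here $c''\in E_{\bm a}$ is indispensable. Writing $c''=h/l$ with $\gcd(h,l)=1$, the second component of a point of $\widetilde{\mathcal Z}_j$ computed from \eqref{2.3} equals
\[
\rho_2^{-j}\Bigl(\tfrac{m(a_2-c''a_1)}{p}-c''z_1+z_2\Bigr)
=\tfrac1l\,\rho_2^{-j}\cdot\frac{m(la_2-ha_1)+p(lz_2-hz_1)}{p},
\qquad m\in\{1,\dots,p-1\},\ z_1,z_2\in\mathbb Z.
\]
Because $c''\in E_{\bm a}$ means $w:=la_2-ha_1\in\mathbb Z\setminus p\mathbb Z$, we have $\gcd(mw,p)=1$, so this number has the form $\tfrac1l\rho_2^{-j}(\tfrac ip+k')$ with $i\equiv mw\not\equiv0\pmod p$ in $\{1,\dots,p-1\}$ and $k'\in\mathbb Z$; that is, $\mathcal Z(\widehat\delta_j)\subset S_j(\tfrac1l,\rho_2)$ for the factor $\delta_j=\delta_{A^{-j}\sigma\widetilde D}$ of $\mu^{\sigma}$. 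Proposition \ref{pro 3.4} then yields $\rho_2=\tfrac{s_2}{t_2}$ with $\gcd(s_2,t_2)=1$ and $t_2\in p\mathbb Z$. (This step breaks down exactly when $c''\in\mathbb Q\setminus E_{\bm a}$, since then $w\in p\mathbb Z$ forces $i\equiv0$, which is consistent with $\rho_2$ being unconstrained in Theorem \ref{thm1.3}.)

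It then remains to exclude $s_2>1$, and I would do this by a decay estimate in the spirit of Lemma \ref{lemma 3.7}. Set $f(x_1,x_2):=|m_{\widetilde D}(x_2,x_1)|$, so that $f$ is continuous, non-negative, $\le1$, and $|\widehat{\mu^{\sigma}}_{>n}(\eta)|=\prod_{j\ge1}f\bigl(\rho_2^{\,j}(\rho_2^{n}\eta_1),\,\rho_1^{\,n+j}\eta_2\bigr)$. I would check the two hypotheses of Lemma \ref{lem2.7}: $f$ is $(1,l)$-periodic, because $d_{k,2}\in\mathbb Z$ and $l(d_{k,1}+c''d_{k,2})=ld_{k,1}+hd_{k,2}\in\mathbb Z$; and if $f(x_1,x_2)=1$ then all exponentials align, which, using $d_0=\bm0$ and $d_1=(d_{1,1},0)^{*}$, forces $x_2d_{1,1}\in\mathbb Z$ and then $x_1d_{k,2}\in\tfrac1{ld_{1,1}}\mathbb Z$ for every $k$, whence $x_1\in\tfrac1{\gamma}\mathbb Z$ with $\gamma=l\,d_{1,1}\gcd_k\{d_{k,2}\}$ (here $\gcd_k\{d_{k,2}\}$ is finite and positive, since condition \eqref{x1.3} with $a_2\ne0$ prevents $D$ from lying on a horizontal line). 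Lemma \ref{lem2.7} with $\rho=\rho_2=s_2/t_2$, $s_2>1$, then gives $|\widehat{\mu^{\sigma}}_{>n}(\eta)|\le\alpha(\ln|\rho_2^{n}\eta_1|)^{-\beta}$, which is precisely the hypothesis needed to run the argument of Proposition \ref{pro 3.6} in the first coordinate of $\mu^{\sigma}$; since $\mu^{\sigma}$ obeys Proposition \ref{pro 3.4} by the previous paragraph, that proof applies verbatim and shows $\mu^{\sigma}$ is not spectral, a contradiction. Hence $s_2=1$ and $\rho_2^{-1}=t_2\in p\mathbb Z$.

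The main obstacle I anticipate is the verification of hypothesis (ii) of Lemma \ref{lem2.7}, i.e.\ showing that the level set $\{f=1\}$ confines the first variable to a single lattice $\tfrac1\gamma\mathbb Z$: this rests on extracting $x_2d_{1,1}\in\mathbb Z$ from the aligned exponentials and then bounding the denominators of $x_1$ through $\gcd_k\{d_{k,2}\}$ together with the rationality $c''=h/l$. A secondary point, requiring an explicit remark rather than real work, is to confirm that the proof of Proposition \ref{pro 3.6} transfers intact to $\mu^{\sigma}$; this is immediate once the $S_j$-structure of its first coordinate has been established.
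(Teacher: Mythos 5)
Your proposal is correct and follows essentially the same route as the paper: $\rho_1^{-1}\in p\mathbb{Z}$ from Lemma \ref{lemma 3.7}, then the observation that $c''\in E_{\bm a}$ forces the second coordinate of $\mathcal{Z}(\widehat{\mu}_{\widetilde M,\widetilde D})$ into the $\frac{1}{c_2}\rho_2^{-i}(\frac{\ell}{p}+\mathbb{Z})$ structure so that Proposition \ref{pro 3.4} yields $\rho_2^{-1}=t/s$ with $p\mid t$, and finally the Lemma \ref{lem2.7} decay estimate combined with Proposition \ref{pro 3.6} to exclude $s>1$. The only difference is cosmetic: you make the coordinate swap explicit via $\sigma$ and spell out the level-set verification, where the paper invokes the second-coordinate analogues of Propositions \ref{pro 3.4} and \ref{pro 3.6} implicitly.
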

\begin{proof}
	$\rho_1^{-1} \in p \mathbb{Z}$ follows from Lemma \ref{lemma 3.7}. Let $c''=\frac{c_{1}}{c_{2}}\in  E_{\bm{a}}$ with $\gcd(c_{1},c_{2})=1.$ Since
	\begin{align*}
		\mathcal{Z}(\widehat{\mu}_{\widetilde{M}, \widetilde{D}})&=\bigcup_{i=1}^{\infty}\bigcup_{j=1}^{p-1}{\left({\begin{array}{*{20}{c}}
					\rho_1^{-i}\left(\frac{ja_{1}}{p}+z_{1}\right)\\
					\rho_2^{-i}\left(\frac{j(-c''a_{1}+a_{2})}{p}-c''z_{1}+z_{2}\right)
			\end{array}}\right)}\\
		&\subset\bigcup_{i=1}^{\infty}\left\{{\frac{1}{c_{2}}\left({\begin{array}{*{20}{c}}
					x\\
					\rho_2^{-i}\left(\frac{\ell}{p}+\mathbb{Z}\right)
			\end{array}}\right)}:x\in\mathbb{R},\ell=1,2,\cdots,p-1\right\},
	\end{align*}
	similar to the proof of Proposition \ref{pro 3.4}, we can conclude that \( \rho_2^{-1} = \frac{t}{s} \) for some \( t \in p \mathbb{Z} \) with \( \gcd(s, t) = 1 \). If \( s > 1 \), let \( \xi = (\xi_1, \xi_2)^* \in \mathbb{R}^2 \) and
	\[
	f(\xi_1, \xi_2) = |m_{\widetilde{D}}(\xi)| = \frac{1}{p} \left| \sum_{j=0}^{p-1} e^{-2\pi i \left(\xi_1\left(d_{j,1}+c''d_{j,2}\right)+\xi_{2}d_{j,2}\right)}\right|.
	\]
	Clearly, $f(\xi_1 + c_2, \xi_2 + 1) = f(\xi_1, \xi_2)$ and
	\begin{align*}
		\left\{ (\xi_1, \xi_2)^* : f(\xi_1, \xi_2) = 1 \right\} &= \left\{ (\xi_1, \xi_2)^* :  \xi_1\left(d_{i,1}+c''d_{i,2}\right)+\xi_{2}d_{i,2} \in \mathbb{Z},i\in\{0,1,\cdots,p-1\} \right\}\\
		&\subset \left\{ \left(\xi_1, k (c_2d_{i,2})^{-1}\right)^* : \xi_1 \in \mathbb{R}, k \in \mathbb{Z} \right\}.
	\end{align*}
	From Lemma \ref{lemma 2.8},  for any $n\geq 1$ and \( \xi = (\xi_1, \xi_2)^* \) with \( |\rho_2^{-n} \xi_2| > 1 \), there exist positive constants \( \alpha, \beta \) such that
	\[
	\left|\widehat{\widetilde{\mu}}_{>n}(\xi)\right| = \prod_{j=1}^{\infty} \left|m_{\widetilde{D}}\left(\widetilde{M}^{*^{-(j+n)}} \xi\right)\right| \leq \alpha \left( \ln | \rho_2^{-n} \xi_2 | \right)^{-\beta}.
	\]
Thus, by Proposition \ref{pro 3.6}, \( \mu_{\widetilde{M}, \widetilde{D} }\) cannot be a spectral measure, leading to a contradiction. Hence, \( s = 1 \), which implies \( \rho_2^{-1} \in p\mathbb{Z} \).
\end{proof}
\begin{pro}\label{pro 5.2}
	Suppose that \( \rho_{1}^{-1}, \rho_{2}^{-1} \in p\mathbb{Z} \) and \( c'' = \frac{c_1}{c_2} \in  E_{\bm{a}} \), where \( \gcd(c_1, c_2) = 1 \) and \( c_2 \in p^{\ell} (\mathbb{Z} \setminus p\mathbb{Z}) \) for some \( \ell \in \mathbb{N} \). If \( p^{\ell+1} \nmid (\rho_{1}^{-1} - \rho_{2}^{-1}) \), then \( \mu_{\widetilde{M}, \widetilde{D}} \) is not a spectral measure.
\end{pro}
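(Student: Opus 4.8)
The plan is to argue by contradiction, following the strategy behind Theorem \ref{thm1.2} but pushing the analysis of the second coordinate down to the exact power of $p$. Suppose $\mu_{\widetilde{M},\widetilde{D}}$ is a spectral measure with spectrum $\Lambda$. By hypothesis $\rho_1^{-1}=t_1$ and $\rho_2^{-1}=t_2$ lie in $p\mathbb{Z}$, so the argument of Lemma \ref{lemma 3.8} applies to $\mu_{\widetilde{M},\widetilde{D}}$ (observe that $p\nmid d_{1,1}$ holds automatically, since $\{\langle\bm{a},d\rangle\bmod p:d\in D\}$ is a complete residue system and $\gcd(a_1,p)=1$, so the final step of that lemma goes through regardless of whether $c''\in E_{\bm{a}}$). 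After translating $\Lambda$ so that $\bm{0}\in\Lambda$, I may therefore assume $\Lambda\cap\widetilde{\mathcal{Z}}_1\neq\emptyset$ and $\Lambda\cap\widetilde{\mathcal{Z}}_2\neq\emptyset$. Fix $\lambda_1\in\Lambda\cap\widetilde{\mathcal{Z}}_1$ and $\lambda_2\in\Lambda\cap\widetilde{\mathcal{Z}}_2$ and write them in the parametrized form coming from the explicit description of $\widetilde{\mathcal{Z}}_j$: with $A:=c_2a_2-c_1a_1$ and $B_i:=c_2z_{i,2}-c_1z_{i,1}$,
\[
\lambda_i=\left(\frac{t_1^{\,i}}{p}\bigl(j_ia_1+pz_{i,1}\bigr),\ \frac{t_2^{\,i}}{pc_2}\bigl(j_iA+pB_i\bigr)\right)^{*},\qquad j_i\in\{1,\dots,p-1\},\ z_{i,1},z_{i,2}\in\mathbb{Z}.
\]

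Since $\Lambda$ is an orthogonal set, $\lambda_2-\lambda_1\in\mathcal{Z}(\widehat{\mu}_{\widetilde{M},\widetilde{D}})=\cup_k\widetilde{\mathcal{Z}}_k$. The first coordinate of $\lambda_2-\lambda_1$ equals $\frac{t_1}{p}\bigl(t_1(j_2a_1+pz_{2,1})-(j_1a_1+pz_{1,1})\bigr)$; because $p\mid t_1$, the bracket is $\equiv-j_1a_1\not\equiv0\pmod p$, so its $p$-adic valuation pins the difference to level one. Hence $\lambda_2-\lambda_1\in\widetilde{\mathcal{Z}}_1$, and comparing first coordinates modulo $p$ forces the matched index $j'=p-j_1$. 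This is the only admissible level, so membership in $\widetilde{\mathcal{Z}}_1$ is \emph{forced}; the contradiction will come from what this demands of the second coordinate.

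Writing the second coordinate of $\lambda_2-\lambda_1$ as $\frac{t_2}{pc_2}(j'A+p\widetilde{B})$, membership in $\widetilde{\mathcal{Z}}_1$ requires $\widetilde{B}=c_2z_2'-c_1z_1'$ for the \emph{same} $z_1'$ determined by the first coordinate; this is the coupling between the two coordinates imposed by the shear $R'$ that defines $\widetilde{D}=R'D$. A direct computation, substituting $A\equiv-c_1a_1$ and $B_i\equiv-c_1z_{i,1}\pmod{c_2}$ and using $j'=p-j_1$, yields
\[
\widetilde{B}+c_1z_1'\equiv c_1\,\frac{t_1-t_2}{p}\,\bigl(j_2a_1+pz_{2,1}\bigr)\pmod{c_2},
\]
where $\frac{t_1-t_2}{p}\in\mathbb{Z}$ since $p\mid t_1,t_2$. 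The left-hand side must vanish modulo $c_2=p^{\ell}c_2'$ with $\gcd(c_2',p)=1$. Because $\gcd(c_1,c_2)=1$ and $\gcd(j_2a_1+pz_{2,1},p)=1$ (again $\gcd(a_1,p)=1$), reducing modulo $p^{\ell}$ forces $p^{\ell}\mid\frac{t_1-t_2}{p}$, i.e. $p^{\ell+1}\mid(t_1-t_2)=(\rho_1^{-1}-\rho_2^{-1})$, contradicting the standing hypothesis $p^{\ell+1}\nmid(\rho_1^{-1}-\rho_2^{-1})$. Therefore $\mu_{\widetilde{M},\widetilde{D}}$, and hence $\mu_{M,D}$, is not a spectral measure.

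I expect the third paragraph to be the main obstacle: one must track the coupling $B_i=c_2z_{i,2}-c_1z_{i,1}$ induced by $R'$ and carry out the modular reduction so that the precise power $p^{\ell+1}$ emerges. Everything else — locating $\lambda_1,\lambda_2$ in consecutive levels through the Lemma \ref{lemma 3.8} mechanism (which in turn rests on Proposition \ref{pro 3.4} and the non-vanishing from Lemma \ref{lemma 2.2}), and reading off the level of $\lambda_2-\lambda_1$ from the first coordinate — is routine $p$-adic bookkeeping. The delicate point is that the factor $p^{\ell}$ hidden in $c_2$ must be compensated \emph{exactly} by the valuation of $\frac{t_1-t_2}{p}$, which is the quantitative content that makes the threshold $p^{\ell+1}$ sharp.
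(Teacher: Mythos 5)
Your proposal is correct and follows essentially the same route as the paper's proof: invoke Lemma \ref{lemma 3.8} to find $\lambda_1\in\Lambda\cap\widetilde{\mathcal{Z}}_1$ and $\lambda_2\in\Lambda\cap\widetilde{\mathcal{Z}}_2$, use the first coordinate (and $p\mid t_1$) to force $\lambda_2-\lambda_1\in\widetilde{\mathcal{Z}}_1$, and extract from the second coordinate the divisibility $p^{\ell+1}\mid(t_1-t_2)$; your congruence $\widetilde{B}+c_1z_1'\equiv c_1\frac{t_1-t_2}{p}(j_2a_1+pz_{2,1})\pmod{c_2}$ is exactly the paper's relation $(\rho_1^{-1}-\rho_2^{-1})c''(i_2a_1+pk_{2,1})\in p\mathbb{Z}$ with denominators cleared. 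Your aside verifying $p\nmid d_{1,1}$ directly from the complete-residue-system property is a worthwhile addition, since the paper's Lemma \ref{lemma 2.9} is only stated for $c''\in\mathbb{Q}\setminus E_{\bm{a}}$ while here $c''\in E_{\bm{a}}$.
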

\begin{proof}
Assuming that \( \mu_{\widetilde{M}, \widetilde{D}} \) is spectral measure with a spectrum \(\bm{0}\in\Lambda \). From \( \rho_1^{-1}, \rho_2^{-1} \in p\mathbb{Z} \) and Lemma \ref{lemma 3.8}, it follows that \( \Lambda \cap \widetilde{\mathcal{Z}}_j \neq \emptyset \) for \( j = 1, 2 \).
	Denote
	\[
	\lambda_j = \frac{1}{p} {\left({\begin{array}{*{20}{c}}
				\rho_{1}^{-j}\left(i_{j}a_{1}+pk_{j,1}\right)\\
				\rho_{2}^{-j}\left(	-c''\left(i_{j}a_{1}+pk_{j,1}\right)+i_{j}a_{2}+pk_{j,2}\right)\\
		\end{array}}\right)} \subset \Lambda \cap \widetilde{\mathcal{Z}}_j
	\]
	for some \( k=(k_{j,1}, k_{j,2})^{*} \in \mathbb{Z}^{2} \), $i_{j}\in\{1,2,\cdots,p-1\}$, $j=1,2$. Then, $\lambda_2 - \lambda_1\in\widetilde{\mathcal{Z}}_{1}$ follows as
	\[
	\frac{1}{p} {\left({\begin{array}{*{20}{c}}
				\rho_1^{-1}\left(\rho_1^{-1} \left(i_{2}a_{1}+pk_{2,1}\right)-\left(i_{1}a_{1}+pk_{1,1}\right)\right)\\
				\rho_2^{-1}\left(\rho_2^{-1}\left(	i_{2}a_{2}+pk_{2,2}-c''\left(i_{2}a_{1}+pk_{2,1}\right)\right)+c''\left(i_{1}a_{1}+pk_{1,1}\right)-i_{1}a_{2}-pk_{1,2}\right)
		\end{array}}\right)}.
	\]
	Furthermore, we get
	\begin{equation*}
		\left(\rho_{1}^{-1} - \rho_{2}^{-1}\right)c''\left(i_{2}a_{1}+pk_{2,1}\right)	=i_{3}a_{2}+pk_{3,2}+i_{1}a_{2}+pk_{1,2}-\rho_{2}^{-1}\left(i_{2}a_{2}+pk_{2,2}\right).
	\end{equation*}
	From the expression $i_{3}a_{2}+pk_{3,2}+i_{1}a_{2}+pk_{1,2}\in p\mathbb{Z} $, it follows that $(\rho_{1}^{-1} - \rho_{2}^{-1})c''(i_{2}a_{1}+pk_{2,1})\in p\mathbb{Z} $. This implies that \( p^{\ell+1} \mid ( \rho_1^{-1} - \rho_2^{-1} ) \), which leads to a contradiction. Therefore, the proof is complete.
\end{proof}
Combining with Proposition \ref{pro 5.1} and Proposition \ref{pro 5.2}, we can derive the necessary part of Theorem \ref{thm1.4}. For the sufficiency, it requires several additional technical lemmas.
For convenience,
we define
\[ P'=\begin{bmatrix}
	c_{2}' & 0 \\
	-\frac{c_{1}}{p^{u}} & -1
\end{bmatrix}, \qquad
G=[0,a_{1}c_{2}'] \times\left[-\frac{|c_{1}a_{1}-c_{2}a_{2}|}{p^{u+1}},\frac{|c_{1}a_{1}-c_{2}a_{2}|}{p^{u+1}}\right],
\]
and the integral periodic zero set of \( P' \) as
\[
Z(\mu) := \left\{ \xi \in G : \widehat{\mu}(\xi + P'k) = 0 \text{ for all } k \in \mathbb{Z}^2 \right\}.
\]

\begin{lemma}\label{lemma 5.3}
	Suppose that
	$\rho_1^{-1} = t_1, \; \rho_2^{-1} = t_2 \in p\mathbb{Z}$
	and
	$c'' = \frac{c_1}{c_2}, \;\text{where } \gcd(c_1, c_2) = 1 \text{ and } c_2 = p^{u} c_2' \in p^{u}(\mathbb{Z} \setminus p\mathbb{Z})$ for some \( u \in \mathbb{N} \). If \[
	\inf_{\xi \in G} \sup_{k \in \mathbb{Z}^2} \left| \widehat{\mu}_{\widetilde{M}, \widetilde{D}}(\xi + P'k) \right| := \epsilon_0 > 0,
	\]
	then \(\mu_{\widetilde{M}, \widetilde{D}}\) is a spectral measure.
\end{lemma}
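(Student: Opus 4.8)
The plan is to exhibit an explicit spectrum for $\mu_{\widetilde{M},\widetilde{D}}$ as an increasing union $\Lambda=\cup_{n\ge1}\Lambda_n$ with $\bm0\in\Lambda_1$, and to verify completeness through Lemma \ref{lemma 2.7}$(i)$, the hypothesis $\epsilon_0>0$ being precisely the uniform tail bound that part $(i)$ requires. Note that since $\widetilde{D}\not\subset\mathbb{Z}^2$, the clean route through the Hadamard-triple criterion (Lemma \ref{lem2.3}) is unavailable, and the quantitative non-vanishing condition is exactly what replaces it. The first step is to record the scaling identity $\widehat{\widetilde{\mu}}_{>n}(\eta)=\widehat{\mu}_{\widetilde{M},\widetilde{D}}\bigl((\widetilde M^*)^{-n}\eta\bigr)$, which turns a lower bound on the tail into a lower bound for $\widehat{\mu}_{\widetilde{M},\widetilde{D}}$ on a fixed compact set; this is the bridge connecting Lemma \ref{lemma 2.7} to the region $G$ and the lattice $P'\mathbb{Z}^2$.

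Next I would build the finite-level sets $\Lambda_n$. The admissible increments at level $n$ come from $\widetilde M^{*}\mathcal{Z}(m_{\widetilde{D}})$, that is, from the $p$ residues $j=0,1,\dots,p-1$ together with the integer-translation freedom encoded by $P'\mathbb{Z}^2$; indeed, using $c_1/p^{u}=c''c_2'$ one checks $P'=(R'^{*})^{-1}\diag(c_2',-1)$, so that $P'\mathbb{Z}^2$ is a sublattice of the integer-translation lattice $(R'^{*})^{-1}\mathbb{Z}^2$ that governs the rescaled positions. Choosing, at each level, $p$ increments with distinct first-coordinate residues modulo $p$ (possible because $\gcd(a_1,p)=1$) produces a bi-zero set $\Lambda_n$ of $\widetilde{\mu}_n$ of cardinality $p^n$ with $\Lambda_n\subset\Lambda_{n+1}$. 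Since $\widetilde{\mu}_n$ is supported on $p^n$ points, $L^2(\widetilde{\mu}_n)$ is $p^n$-dimensional, so any orthogonal family of this size is automatically an orthonormal basis; hence each $\Lambda_n$ is in fact a spectrum of $\widetilde{\mu}_n$.

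The decisive step is the adaptive choice of the lattice part. For $\lambda\in\Lambda_n$ the rescaled point $(\widetilde M^*)^{-n}\lambda$ lies in $\xi_0+P'\mathbb{Z}^2$ for a unique $\xi_0\in G$, where $G$ is arranged to be a fundamental domain for this reduction, and the freedom above lets me steer the surviving translate $P'k$. By $\epsilon_0>0$ I may select $k$ with $\bigl|\widehat{\mu}_{\widetilde{M},\widetilde{D}}(\xi_0+P'k)\bigr|\ge\epsilon_0$. Because $(\widetilde M^*)^{-1}=\diag(\rho_1,\rho_2)$ is contracting, $(\widetilde M^*)^{-n}\xi\to\bm0$ uniformly for $|\xi_i|\le\tfrac13$, so the equicontinuity of the Fourier transforms on compacta (Lemma \ref{lem2.8}) yields $\bigl|\widehat{\widetilde{\mu}}_{>n}(\lambda+\xi)\bigr|^2=\bigl|\widehat{\mu}_{\widetilde{M},\widetilde{D}}((\widetilde M^*)^{-n}\lambda+(\widetilde M^*)^{-n}\xi)\bigr|^2\ge\tfrac12\epsilon_0^2$ once $n\ge N_0$. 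Passing to an increasing sequence $\alpha_n$ with $\alpha_2\ge N_0$ makes this hold for every $\alpha_{n+l}$ with $n,l\ge1$, and Lemma \ref{lemma 2.7}$(i)$ then gives that $\Lambda$ is a spectrum of $\mu_{\widetilde{M},\widetilde{D}}$, hence of $\mu_{M,D}$.

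\textbf{The main obstacle} I anticipate is making the adaptive selection rigorous while keeping it compatible with the nested, orthogonal tower: I must verify that $G$ is genuinely a fundamental domain for $(\widetilde M^*)^{-n}\lambda$ modulo $P'\mathbb{Z}^2$ (so that every reduced position is attained and the supremum in the hypothesis is the correct quantity to invoke), and that steering each newly adjoined element toward a near-maximizer of $\bigl|\widehat{\mu}_{\widetilde{M},\widetilde{D}}(\xi_0+\,\cdot\,)\bigr|$ over $P'\mathbb{Z}^2$ can be carried out without destroying the orthogonality of $\Lambda_n$ or the inclusion $\Lambda_n\subset\Lambda_{n+1}$. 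In other words, the real work is in confirming that the Hadamard tower carries enough translation freedom to realize the $\sup_k$ of the hypothesis at every level simultaneously; once that compatibility is established, the remaining estimates are routine consequences of $\epsilon_0>0$ and equicontinuity.
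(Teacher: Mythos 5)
Your overall strategy coincides with the paper's: build $\Lambda=\bigcup_\ell\Lambda_{n_\ell}$ level by level, use the $P'\mathbb{Z}^2$-translation freedom to steer each rescaled point onto a near-maximizer of $|\widehat{\mu}_{\widetilde{M},\widetilde{D}}|$ guaranteed by $\epsilon_0>0$, observe that an orthogonal set of cardinality $p^{n}$ is automatically a spectrum of $\widetilde{\mu}_{n}$ because $\#\operatorname{spt}\widetilde{\mu}_{n}=p^{n}$, and close the argument with equicontinuity (Lemma \ref{lem2.8}) and Lemma \ref{lemma 2.7}$(i)$. Your identity $P'=(R'^*)^{-1}\diag(c_2',-1)$ is correct and is exactly the reason the $P'$-translates are admissible.

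There is, however, a genuine gap at the step you dispatch in one clause: ``choosing $p$ increments with distinct first-coordinate residues modulo $p$ \dots produces a bi-zero set of $\widetilde{\mu}_n$.'' The zero set $\mathcal{Z}(m_{\widetilde{D}})=(R'^*)^{-1}\bigcup_{j=1}^{p-1}\bigl(\frac{j\bm a+p\mathbb{Z}^2}{p}\bigr)$ is a discrete union of lattice translates, not a union of vertical lines, so membership of $\widetilde{M}^{*-l}(\lambda_1-\lambda_2)$ in it constrains \emph{both} coordinates. Since $\widetilde{M}^*=\diag(t_1,t_2)$ with $t_1\neq t_2$, the contributions to $\lambda_1-\lambda_2$ coming from levels above the first disagreement $l$ rescale by different powers in the two slots, and the second slot does not automatically reduce to an integer. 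The paper resolves this by an explicit two-scale choice of increments --- first coordinate proportional to $\sum_i t_1^{i}w_i$, second proportional to $\sum_i t_2^{i}w_i$, with the lattice part confined to $P'\mathbb{Z}^2$ --- and then invokes the divisibility $p^{u+1}\mid(t_1-t_2)$ so that the cross terms vanish modulo $\mathbb{Z}$ and $m_{\widetilde{D}}\bigl(\widetilde{M}^{-l}(\lambda_1-\lambda_2)\bigr)$ collapses to $\frac1p\sum_j e^{-2\pi i\frac{c_2'(w_1^{(l)}-w_2^{(l)})}{p}\langle d_j,\bm a\rangle}=0$. Without this input the finite-level orthogonality simply fails, and no amount of steering by $\epsilon_0$ repairs it. You correctly sense that there is a compatibility problem, but you locate it in the fundamental-domain/steering mechanism rather than in the two-scale orthogonality, which is where the real work (and the hidden hypothesis $p^{u+1}\mid(t_1-t_2)$, used by the paper though absent from the lemma's statement) lies.
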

\begin{proof}
	According to Lemma \ref{lem2.8}, \(\widehat{\mu}_{\widetilde{M}, \widetilde{D}}\) is uniformly continuous. This implies that there exists \(\delta> 0\) such that for any \(\xi \in G\), there exists \(k_\xi \in \mathbb{Z}^2\) satisfying
{\small
\begin{equation}\label{5.3}
		\left| \widehat{\mu}_{\widetilde{M}, \widetilde{D}}(\xi + y + P'k_\xi) \right| \geq
\left| \left| \widehat{\mu}_{\widetilde{M}, \widetilde{D}}(\xi + P'k_\xi) \right| - \left| \widehat{\mu}_{\widetilde{M}, \widetilde{D}}(\xi + y + P'k_\xi) - \widehat{\mu}_{\widetilde{M}, \widetilde{D}}(\xi + P'k_\xi) \right| \right|
\geq \frac{\epsilon_0}{2},
	\end{equation}
}
whenever $\parallel y \parallel\le\delta.$
Construct the sequence \(\{\Lambda_{n_{\ell}}\}\) with \( n_0 = 0 \) and \( n_{\ell+1}>n_{\ell} \) such that
\begin{equation}\label{5.4}
		\inf_{\lambda_\ell \in \Lambda_{n_\ell}} \left| \widehat{\mu}_{\widetilde{M}, \widetilde{D}}\left({\widetilde{M}} ^{- n_\ell} \lambda_\ell\right) \right| \geq \frac{\epsilon_0}{2}.
\end{equation}
For any \( \bm{w}_\ell \in \{0, 1, 2,\cdots,p-1\}^{n_{\ell+1} - n_\ell} := \Xi^{n_{\ell+1} - n_\ell} \), let
\[
	x_{\bm{w}_\ell} = {\widetilde{M}}^{-n_{\ell+1}} {\left({\begin{array}{*{20}{c}}
				\frac{a_{1}c'_{2}}{p}\sum_{i=n_{\ell}+1}^{n_{\ell+1}}t_{1}^{i}w_{i}\\
				-\frac{a_{1}c_{1}-a_{2}c_{2}}{p^{u+1}}\sum_{i=n_{\ell}+1}^{n_{\ell+1}}t_{2}^{i}w_{i}\\
		\end{array}}\right)}
\]
	and
	\[
	\Lambda_{n_{\ell+1}} = \Lambda_{n_\ell} + \left\{ {\widetilde{M}}^{n_{\ell+1}} (x_{\bm{w}_\ell} + P' k _{x_{\bm{w}_\ell}}) : \bm{w}_\ell \in \Xi^{n_{\ell+1} - n_\ell} \right\}.
	\]
Along with \eqref{5.3} and \eqref{5.4}, we have
\begin{equation}\label{5.5}
\sup_{\lambda_\ell \in \Lambda_{n_\ell}} \parallel {\widetilde{M}}^{ - n_{\ell+1}} \lambda_k\parallel <\delta.
\end{equation}

	Furthermore, we will prove that \( \Lambda=\cup_{\ell=1}^{\infty}\Lambda_{n_\ell} \) is the spectrum of \( \mu_{\widetilde{M}, \widetilde{D}} \). For any \( \lambda_1 \neq \lambda_2 \in \Lambda_{n_\ell} \), there exist two distinct sequences \( \{w_1^{(i)}\}_{i=1}^{n_\ell} \),\( \{w_2^{(i)}\}_{i=1}^{n_\ell} \subset \{0, 1, 2, \cdots, p-1\}^{n_\ell} \), such that
	\[
	\lambda_i = \sum_{j=1}^{\ell} {\widetilde{M}}^{n_j} \left( x_{w_i^{(j)}} + P'k_{x_{w_i^{(j)}}} \right),
	\]
where \( k_{x_{w_i^{(j)}}} = ( k_{j_{ i},1}, k_{j_{i},2} )^* \) and \( l = \min \{ j : w_1^{(j)} \neq w_2^{(j)} \} \in (n_s, n_{s+1}] \) for some \( 0 \leq s \leq \ell-1 \). The expression for \( \lambda_1 - \lambda_2 \) can be rewritten as follows:
	\[
	{\left({\begin{array}{*{20}{c}}
				\frac{a_{1}c'_{2}}{p}\sum_{j=l}^{n_{\ell}}t_{1}^{j}\left(w_{1}^{(j)}-w_{2}^{(j)}\right)+\sum_{j=s+1}^{\ell}t_{1}^{n_j}c'_{2}\left(k_{j_{1},1}-k_{j_{2},1}\right)\\
				-\frac{a_{1}c_{1}-a_{2}c_{2}}{p^{u+1}}\sum_{j=l}^{n_{\ell}}t_{2}^{j}\left(w_{1}^{(j)}-w_{2}^{(j)}\right)-\sum_{j=s+1}^{\ell}t_{2}^{n_j}\left(\frac{c_{1}\left(k_{j_{1}, 1}-k_{j_{2}, 1}\right)}{p^{u}}+k_{j_{ 1},2}-k_{j_{2}, 2}\right)
		\end{array}}\right)}.
	\]
	It follows from \( p^{u + 1} \mid (t_1 - t_2) \) that
	\begin{align*}
		m_{\widetilde{D}}\left( {\widetilde{M}}^{ - l} (\lambda_1 - \lambda_2) \right) &= \frac{1}{p}\sum_{j=0}^{p-1} e^{-2\pi i \frac{c'_2\left( w_1^{(l)} - w_2^{(l)} \right)}{p}\left( d_{j,1}a_{1} + d_{j,2}a_{2} \right)  }\\
		&= \frac{1}{p}\sum_{j=0}^{p-1} e^{-2\pi i \frac{c'_2\left( w_1^{(l)} - w_2^{(l)} \right)}{p}\left\langle d_{j}, \bm{a}\right\rangle  }= 0.
	\end{align*}
	Therefore, we conclude that \( \widetilde{\mu}_{n_{\ell}} (\lambda_1 - \lambda_2) = 0 \). Since \( \# \text{spt} \, \widetilde{\mu}_{n_{\ell}} = p^{n_\ell} \), it immediately follows that \( \Lambda_{n_\ell} \) is a spectrum of \( \widetilde{\mu}_{n_{\ell}} \) for all \( \ell \geq 1 \). By Lemma \ref{lem2.8}, the sequence \( \{ \widehat{\widetilde{\mu}}_{>n_\ell} \}_{\ell=1}^{\infty} \) is equicontinuous.
Therefore, for $\lambda \in \Lambda_{n_{\ell+1}} \setminus \Lambda_{n_\ell}, \, \xi \in [0, r]^{2} $ with some $r>0$,  using \eqref{5.3}, we obtain that
	\[
	\left|\widehat{\widetilde{\mu}}_{>n_{\ell+1}} (\lambda + \xi)\right| \geq \left|\widehat{\widetilde{\mu}}_{>n_{\ell+1}} (\lambda)\right| - \left|\widehat{\widetilde{\mu}}_{>n_{\ell+1}} (\lambda) - \widehat{\widetilde{\mu}}_{>n_{\ell+1}} (\lambda + \xi)\right|
	\geq \frac{\epsilon_0}{2}.
	\]
Moreover, by Lemma \ref{lemma 2.7}, \( \Lambda \) is a spectrum of \( \mu_{\widetilde{M}, \widetilde{D}} \).
\end{proof}

\begin{lemma}\label{lemma 5.4}
	Suppose that
	$\rho_1^{-1} = t_1, \; \rho_2^{-1} = t_2 \in p\mathbb{Z}$
	and
	$c'' = \frac{c_1}{c_2}, \;\text{where } \gcd(c_1, c_2) = 1 \text{ and } c_2 = p^{u} c_2' \in p^{u}(\mathbb{Z} \setminus p\mathbb{Z})$ for some \( u \in \mathbb{N} \). If $Z(\mu_{\widetilde{M}, \widetilde{D}}) = \emptyset$, then
	\[
	\inf_{\xi \in G} \sup_{k \in \mathbb{Z}^2} \left| \widehat{\mu}_{\widetilde{M}, \widetilde{D}}(\xi + P'k) \right| > 0.
	\]
\end{lemma}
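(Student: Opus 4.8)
The plan is to prove this by contradiction, exploiting the compactness of the rectangle $G$ together with the continuity of $\widehat{\mu}_{\widetilde{M},\widetilde{D}}$. First I would suppose, contrary to the claim, that
\[
\inf_{\xi \in G} \sup_{k \in \mathbb{Z}^2} \left| \widehat{\mu}_{\widetilde{M}, \widetilde{D}}(\xi + P'k) \right| = 0,
\]
and extract a minimizing sequence $\{\xi_n\}_{n=1}^\infty \subset G$ with $\sup_{k \in \mathbb{Z}^2} |\widehat{\mu}_{\widetilde{M},\widetilde{D}}(\xi_n + P'k)| \to 0$ as $n \to \infty$. Since $G$ is a compact rectangle in $\mathbb{R}^2$, passing to a subsequence gives $\xi_n \to \xi^* \in G$.

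The central step is to transfer the vanishing onto the single limit point $\xi^*$, one lattice translate at a time. Because $\widehat{\mu}_{\widetilde{M},\widetilde{D}}$ is the Fourier transform of a compactly supported probability measure, it is continuous (the relevant equicontinuity is recorded in Lemma \ref{lem2.8}). Fixing an arbitrary $k \in \mathbb{Z}^2$, I would use $\xi_n + P'k \to \xi^* + P'k$ together with the one-sided bound
\[
\left| \widehat{\mu}_{\widetilde{M},\widetilde{D}}(\xi_n + P'k) \right| \le \sup_{k' \in \mathbb{Z}^2} \left| \widehat{\mu}_{\widetilde{M},\widetilde{D}}(\xi_n + P'k') \right| \longrightarrow 0
\]
to conclude, by continuity, that $\widehat{\mu}_{\widetilde{M},\widetilde{D}}(\xi^* + P'k) = 0$. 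Since $k$ is arbitrary, this says exactly that $\xi^* \in Z(\mu_{\widetilde{M},\widetilde{D}})$, contradicting the hypothesis $Z(\mu_{\widetilde{M},\widetilde{D}}) = \emptyset$; hence the infimum must be strictly positive.

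I do not anticipate a serious obstacle here: the only delicate point is the interchange of the limit in $n$ with the supremum over $k$, and this is sidestepped by invoking the displayed one-sided inequality for each fixed $k$ separately, so that no uniformity in $k$ is required. An equivalent, and perhaps conceptually cleaner, formulation I would keep in reserve is to observe that the map $\xi \mapsto \sup_{k \in \mathbb{Z}^2} |\widehat{\mu}_{\widetilde{M},\widetilde{D}}(\xi + P'k)|$, being a supremum of continuous functions, is lower semicontinuous and therefore attains its infimum on the compact set $G$; the assumption $Z(\mu_{\widetilde{M},\widetilde{D}}) = \emptyset$ forces this function to be strictly positive at every point of $G$, so its attained minimum is positive.
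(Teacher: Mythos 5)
Your proof is correct and is essentially the same compactness-plus-continuity argument as the paper's: the paper extracts a finite subcover of $G$ by balls on which $|\widehat{\mu}_{\widetilde{M},\widetilde{D}}(\cdot + P'k_{\xi_i})|$ stays above $\tfrac{1}{2}\epsilon_{\xi_i}$ and takes the minimum over the finitely many indices, whereas you run the equivalent sequential-compactness contradiction (or, in your reserve formulation, the lower semicontinuity of the supremum). Both hinge on exactly the same two facts — continuity of $\widehat{\mu}_{\widetilde{M},\widetilde{D}}$ and compactness of $G$ — so there is nothing further to reconcile.
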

\begin{proof}
	For every \( \xi \in G \), the condition \( Z(\mu_{\widetilde{M}, \widetilde{D}}) = \emptyset \) implies the existence of an integer vector
	$
	k_{\xi} \in \mathbb{Z}^2
	$
	such that
	\[
	\left| \widehat{\mu}_{\widetilde{M}, \widetilde{D}}(\xi + P' k_{\xi}) \right| \geq \epsilon_\xi > 0.
	\]
	By the continuity of \( \widehat{\mu}_{\widetilde{M}, \widetilde{D}} \), we obtain
	\[
	\left| \widehat{\mu}_{\widetilde{M}, \widetilde{D}}(\xi + y + P' k_{\xi}) \right| \geq \frac{1}{2} \epsilon_\xi > 0 \quad \text{whenever} \quad \| y \| < \delta_\xi.
	\]
	Define
	$B(\xi, \delta_\xi) := \{ x \in \mathbb{R}^2 : \text{dist}(x, \xi) < \delta_\xi \}.$
	Since \( G \) is a compact set and \( G \subset \cup_{\xi \in G} B(\xi, \delta_\xi) \), by the finite covering theorem, there exists a finite collection of balls
	$\{ B(\xi_i, \delta_{\xi_i}) \}_{i=1}^\ell$
	such that
	$\cup_{i=1}^\ell B(\xi_i, \delta_{\xi_i}) \supset G.$
	For any \( \xi \in G \), there exists a \( j \) with \( 1 \leq j \leq \ell \) such that \( \xi \in B(\xi_j, \delta_{\xi_j}) \). Thus,
	\[
	\left| \widehat{\mu}_{\widetilde{M}, \widetilde{D}}(\xi + P' k_{\xi_j}) \right| \geq \frac{1}{2} \epsilon_{\xi_j} \geq \frac{1}{2} \min_{1 \leq i \leq \ell} \{ \epsilon_{\xi_i} \} > 0.
	\]
\end{proof}
According to Lemma \ref{lemma 5.3} and Lemma \ref{lemma 5.4}, for the sufficiency of Theorem \ref{thm1.4}, we only need to prove that \( Z(\mu_{\widetilde{M}, \widetilde{D}}) = \emptyset \). However, before proceeding, we require a key lemma.
\begin{lemma}\label{lemma 5.5}
	Suppose that
	$\rho_1^{-1} = t_1, \; \rho_2^{-1} = t_2 \in p\mathbb{Z}$
	and
	$c'' = \frac{c_1}{c_2}, \;\text{where } \gcd(c_1, c_2) = 1 \text{ and } c_2 = p^{u} c_2' \in p^{u}(\mathbb{Z} \setminus p\mathbb{Z})$ for some \( u \in \mathbb{N} \). For any \( k = (k_1, k_2)^* \in \mathbb{Z}^2 \), if \( \widehat{\mu}_{\widetilde{M}, \widetilde{D}}(\xi + P'k) = 0 \), then there exists \( j_k \) such that $m_{\widetilde{D}}({\widetilde{M}}^{-j_k} (\xi + P'k)) = 0.
	$
\end{lemma}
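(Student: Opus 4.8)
The plan is to prove the statement directly from the infinite-product representation of the Fourier transform, the decisive point being that a convergent infinite product of factors of modulus at most $1$ can vanish only when one factor is exactly zero, provided the tail of the product is first shown to converge to a nonzero limit. Writing $\eta := \xi + P'k$, equation \eqref{2.1} applied to $\widetilde{M},\widetilde{D}$ gives the product formula
\[
\widehat{\mu}_{\widetilde{M},\widetilde{D}}(\eta) = \prod_{j=1}^{\infty} m_{\widetilde{D}}\bigl(\widetilde{M}^{-j}\eta\bigr),
\]
where I use that $\widetilde{M}$ is real and diagonal, so $\widetilde{M}^{*}=\widetilde{M}$. Since $m_{\widetilde{D}}$ is a finite exponential sum with $m_{\widetilde{D}}(\bm{0})=1$, it is smooth, and after shrinking a neighborhood $U$ of $\bm{0}$ one has a constant $C>0$ with $|1-m_{\widetilde{D}}(\zeta)|\le C\|\zeta\|$ and $|m_{\widetilde{D}}(\zeta)|\ge \tfrac12$ for all $\zeta\in U$.

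First I would exploit the expansiveness of $\widetilde{M}$: because $t_1,t_2\in p\mathbb{Z}$ satisfy $t_1,t_2\ge p\ge 2$, the contraction ratio $\gamma:=(\min\{t_1,t_2\})^{-1}<1$ yields $\|\widetilde{M}^{-j}\eta\|\le \gamma^{j}\|\eta\|$ for every $j$. Consequently there is an index $J=J(\eta)$ with $\widetilde{M}^{-j}\eta\in U$ for all $j\ge J$, and then the Lipschitz bound combines with geometric decay to give absolute summability of the tail,
\[
\sum_{j\ge J}\bigl|1-m_{\widetilde{D}}(\widetilde{M}^{-j}\eta)\bigr| \le C\|\eta\|\sum_{j\ge J}\gamma^{j} = \frac{C\|\eta\|\gamma^{J}}{1-\gamma}<\infty .
\]

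Since this tail sum converges absolutely and no tail factor vanishes (each has modulus at least $\tfrac12$ on $U$), the classical criterion for infinite products shows that $\prod_{j\ge J} m_{\widetilde{D}}(\widetilde{M}^{-j}\eta)$ converges to a nonzero complex number. Factoring the product into its finite head and this nonzero tail,
\[
0=\widehat{\mu}_{\widetilde{M},\widetilde{D}}(\eta)=\Bigl(\prod_{j=1}^{J-1} m_{\widetilde{D}}(\widetilde{M}^{-j}\eta)\Bigr)\Bigl(\prod_{j\ge J} m_{\widetilde{D}}(\widetilde{M}^{-j}\eta)\Bigr),
\]
forces the finite product $\prod_{j=1}^{J-1} m_{\widetilde{D}}(\widetilde{M}^{-j}\eta)$ to vanish. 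A finite product of complex numbers is zero only if one factor is, so there exists $j_k\in\{1,\dots,J-1\}$ with $m_{\widetilde{D}}(\widetilde{M}^{-j_k}\eta)=0$, which is the assertion. This is in fact exactly the nontrivial inclusion $\mathcal{Z}(\widehat{\mu}_{\widetilde{M},\widetilde{D}})\subset\bigcup_{j\ge 1}\widetilde{M}^{*j}\mathcal{Z}(m_{\widetilde{D}})$ underlying \eqref{2.2}, specialized to $\eta=\xi+P'k$.

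The delicate step, which I expect to be the crux, is the nonvanishing of the tail product: it is precisely here that the geometric decay coming from the expansiveness of $\widetilde{M}$ must be paired with the first-order bound $|1-m_{\widetilde{D}}(\zeta)|\le C\|\zeta\|$ near the origin to produce the summability that guarantees a nonzero limit rather than a spurious accumulation to $0$. I would also emphasize that the specific matrix $P'$ is irrelevant to the argument; the conclusion holds verbatim for an arbitrary $\eta\in\mathbb{R}^2$, and the parametrization $\eta=\xi+P'k$ is retained solely because that is the form in which the lemma will be applied when verifying $Z(\mu_{\widetilde{M},\widetilde{D}})=\emptyset$.
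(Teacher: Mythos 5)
Your proof is correct and follows essentially the same route as the paper: both arguments factor the infinite product $\widehat{\mu}_{\widetilde{M},\widetilde{D}}(\xi+P'k)=\prod_{j\ge 1} m_{\widetilde{D}}(\widetilde{M}^{-j}(\xi+P'k))$ into a finite head and a tail, use the geometric decay $\|\widetilde{M}^{-j}\eta\|\le\gamma^{j}\|\eta\|$ to show the tail is bounded away from zero, and conclude that some head factor must vanish. The only difference is cosmetic: the paper certifies the nonvanishing tail via an explicit cosine lower bound $\prod_{j}\cos\bigl(\tfrac14\max\{\rho_1^{j},\rho_2^{j}\}\bigr)>0$, whereas you invoke the standard criterion that $\sum_{j}|1-a_j|<\infty$ with no zero factor forces $\prod_j a_j\neq 0$; both are valid.
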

\begin{proof}
	From the inequalities \( |z| \geq |\text{Re}(z)| \) and \( \cos(2\pi x) \geq \cos(2\pi r) \) for all \( |x| \leq r < \frac{1}{4} \), it follows that
	\[
	\left| \frac{1}{p} \sum_{j=0}^{p-1} e^{2\pi ix_j} \right| \geq \cos(2\pi r),
\quad \text{with} \quad
|x_{j}| \leq r < \frac{1}{4}.
	\]
We can choose an appropriate \( J \) such that for all \( i \in \{0, 1, \cdots, p-1\} \), the following inequality holds:
	\[
	H_i = \rho_1^J (\xi_1 + c_2' k_1)(d_{i,1} + c'' d_{i,2}) + \rho_2^J d_{i,2} \left( \xi_2 - \frac{c_1 k_1}{p^u} - k_2 \right) \leq \frac{1}{16}.
	\]
	Hence,
	\[
	|m_{\widetilde{D}}(\widetilde{M}^{-j}(\xi + P'k))| \geq \cos \left( \frac{1}{4} \max \{ \rho_1^{j-J}, \rho_2^{j-J} \}\right)  \quad \text{for} \quad j \geq J+1.
	\]
Furthermore, we know that
	\[
	\prod_{j=J+1}^\infty |m_{\widetilde{D}}(\widetilde{M}^{-j}(\xi + P'k))| \geq \prod_{j=1}^\infty \cos \left( \frac{1}{4} \max \left\{ \rho_1^j, \rho_2^j \right\} \right) > 0.
	\]
	Together with the assumption that
	\( \widehat{\mu}_{\widetilde{M}, \widetilde{D}}(\xi + P'k) = 0 ,\)
	this implies that
	\( \widehat{\widetilde{\mu}}_{J}(\xi + P'k) = 0 .\)
	Consequently, we have  $m_{\widetilde{D}}({\widetilde{M}}^{-j_k} (\xi + P'k)) = 0$ for some \( j_k\le J \).
\end{proof}
We now proceed to demonstrate that \( Z(\mu_{\widetilde{M},\widetilde{D}}) = \emptyset \).
\begin{lemma}\label{lemma 5.6}
	Suppose that
	$\rho_1^{-1} = t_1, \; \rho_2^{-1} = t_2 \in p\mathbb{Z}$
	and
	$c'' = \frac{c_1}{c_2}, \;\text{where } \gcd(c_1, c_2) = 1 \text{ and } c_2 = p^{u} c_2' \in p^{u}(\mathbb{Z} \setminus p\mathbb{Z})$ for some \( u \in \mathbb{N} \), then	\( Z(\mu_{\widetilde{M},\widetilde{D}}) = \emptyset \).
\end{lemma}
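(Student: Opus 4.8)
The plan is to prove the stronger statement that \emph{every} $\xi=(\xi_1,\xi_2)^*\in G$ fails to lie in $Z(\mu_{\widetilde M,\widetilde D})$; that is, for each such $\xi$ I would exhibit an integer vector $k$ with $\widehat{\mu}_{\widetilde M,\widetilde D}(\xi+P'k)\neq 0$. The whole argument is driven by Lemma~\ref{lemma 5.5}: if $\widehat{\mu}_{\widetilde M,\widetilde D}(\xi+P'k)=0$, then $m_{\widetilde D}(\widetilde M^{-j_k}(\xi+P'k))=0$ for some $j_k\ge 1$, so that $\widetilde M^{-j_k}(\xi+P'k)\in\mathcal Z(m_{\widetilde D})$. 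Writing $\xi+P'k=(\xi_1+c_2'k_1,\ \xi_2-\tfrac{c_1}{p^u}k_1-k_2)^*$ and recalling that the points of $\mathcal Z(m_{\widetilde D})$ are exactly $(\tfrac{\ell a_1}{p}+m_1,\ \tfrac{\ell(a_2-c''a_1)}{p}-c''m_1+m_2)^*$ with $\ell\in\{1,\dots,p-1\}$ and $m_1,m_2\in\mathbb Z$, the vanishing at $\xi+P'k$ becomes the scalar system
\begin{equation*}
t_1^{-j_k}(\xi_1+c_2'k_1)=\tfrac{\ell a_1}{p}+m_1,\qquad
t_2^{-j_k}\bigl(\xi_2-\tfrac{c_1}{p^u}k_1-k_2\bigr)=\tfrac{\ell(a_2-c''a_1)}{p}-c''m_1+m_2.
\end{equation*}

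The key point I would exploit is that the first equation is completely independent of $k_2$. So I would fix once and for all a $k_1$ with $A:=\xi_1+c_2'k_1\neq 0$ (possible because $c_2'\neq 0$) and study the points $\xi+P'(k_1,k_2)^*$ as $k_2$ ranges over $\mathbb Z$. Whenever such a point is a zero of $\widehat{\mu}_{\widetilde M,\widetilde D}$, comparing $p$-adic valuations $v_p$ in the first equation forces $v_p(A)=j_k\,v_p(t_1)-1$, since $\ell a_1+pm_1$ is coprime to $p$ (as $\gcd(a_1,p)=1$ and $1\le\ell\le p-1$). As $v_p(A)$ and $v_p(t_1)$ are fixed, this determines $j_k$ uniquely, say $j_k=j_0$, independently of $k_2$; and then $\tfrac{\ell a_1}{p}+m_1=t_1^{-j_0}A$ is a fixed number, which in turn pins down $\ell$ and $m_1$. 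Hence, for this fixed $k_1$, the triple $(j_0,\ell,m_1)$ is the same for every $k_2$ at which $\widehat{\mu}_{\widetilde M,\widetilde D}$ vanishes.

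It then remains to compare two consecutive values $k_2$ and $k_2+1$. If $\widehat{\mu}_{\widetilde M,\widetilde D}$ is nonzero at either, we are done. Otherwise both vanish, and subtracting the two second-coordinate equations---whose right-hand constants $\tfrac{\ell(a_2-c''a_1)}{p}-c''m_1$ agree by the previous step---yields $t_2^{-j_0}=m_2-m_2'\in\mathbb Z$ for the associated integers $m_2,m_2'$. But $t_2\in p\mathbb Z$ and $j_0\ge1$ give $t_2^{j_0}\ge p\ge2$, so $t_2^{-j_0}\in(0,1)$, a contradiction. Thus $\xi\notin Z(\mu_{\widetilde M,\widetilde D})$, and since $\xi\in G$ was arbitrary, $Z(\mu_{\widetilde M,\widetilde D})=\emptyset$.

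The step I expect to be the crux is establishing that the truncation level $j_k$ and the auxiliary integers $\ell,m_1$ are forced to be independent of $k_2$; this rests precisely on the decoupling of the first coordinate from $k_2$ together with the valuation identity $v_p(A)=j_k\,v_p(t_1)-1$. I would also point out that this argument uses neither the divisibility $p^{u+1}\mid(t_1-t_2)$ nor the membership $c''\in E_{\bm a}$: those hypotheses are not needed to show $Z(\mu_{\widetilde M,\widetilde D})=\emptyset$, and enter only afterwards, in Lemma~\ref{lemma 5.3}, when converting this emptiness into spectrality.
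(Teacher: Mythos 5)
Your proof is correct, and it takes a genuinely different and noticeably shorter route than the paper's. The paper fixes the second integer coordinate and lets the first integer coordinate $n$ range over $\{0,1,\cdots,p-1\}$; it then proves a combinatorial claim (exactly $p-1$ of the truncation levels $j_{n,\ell_n}$ equal $1$ and the remaining one exceeds $1$) via mod-$p$ arithmetic with $c_2'n$, and only afterwards pins down the second-coordinate data by a pigeonhole on $i_{1,\ell_1}$, arriving at the contradiction $\frac{\ell_1'-\ell_1''}{t_2}\in\mathbb{Z}$. You instead fix the first integer coordinate $k_1$ (with $A=\xi_1+c_2'k_1\neq 0$) and vary only the second: a single vanishing instance yields $pA=t_1^{j}(\ell a_1+pm_1)$ with $\gcd(\ell a_1+pm_1,p)=1$, so the $p$-adic valuation identity $j\,v_p(t_1)=v_p(pA)$ forces $j$ to be the same for every $k_2$, and then $\ell$ and $m_1$ are also fixed by the uniqueness of the representation $\frac{N}{p}=\frac{\ell a_1}{p}+m_1$ with $\gcd(a_1,p)=1$; subtracting the second-coordinate equations at $k_2$ and $k_2+1$ gives $t_2^{-j}\in\mathbb{Z}$, impossible since $t_2\geq p\geq 2$. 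This valuation argument replaces the paper's entire claim and is cleaner; the one point that genuinely must be checked is the uniqueness of $\ell$ (without it, in the edge case $t_2=p$, $j=1$, the quantity $\frac{(\ell-\ell')a_2}{p}$ could absorb $t_2^{-j}$ modulo $\mathbb{Z}$), and you do supply it. Your closing observation is also consistent with the paper: neither $p^{u+1}\mid(t_1-t_2)$ nor $c''\in E_{\bm{a}}$ is used in the paper's proof of this lemma, as those hypotheses enter only through Lemma \ref{lemma 5.3}.
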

\begin{proof}
	If not, we let \( \xi = (\xi_1, \xi_2)^* \in Z(\mu_{\widetilde{M}, \widetilde{D}}) \). For any \( k = (n, \ell)^* \in \mathbb{Z}^2 \), the definition of \( Z(\mu_{\widetilde{M}, \widetilde{D}}) \) ensures that \( \xi + P'k \in \mathcal{Z}(\widehat{\mu}_{\widetilde{M}, \widetilde{D}}) \). By Lemma \ref{lemma 5.5}, there exists \( j_k \) such that \( \xi + P'k \in\mathcal{Z}( m_{{\widetilde{M}}^{-j_k} \widetilde{D}}) \), i.e.,
	\begin{equation}\label{5.6}
		\xi_1 + c'_2 n = t_{1}^{j_k} \left(\frac{i_{k}a_{1}}{p} + k_1\right)
		\;
		\text{and}
		\;
		\xi_2 - \frac{c_1 n}{p^{u}} - \ell=t_{2}^{j_k}\left(-c''\left(\frac{i_{k}a_{1}}{p}+k_{1}\right)+\left(\frac{i_{k}a_{2}}{p}+k_{2}\right)\right)
	\end{equation}
	for some \(k_1,k_{2} \in \mathbb{Z}\), $i_{k}\in\{1,2,\cdots,p-1\}$. For the convenience of subsequent discussion, we define
	$j_{k} = j_{n,\ell_{n}}, k_i = k_i^{(n,\ell_{n})} $ and $i_{k}=i_{n,\ell_{n}}.$
	Take \( n = 0 \) and \( \ell_0 \in \mathbb{Z} \). By equation \eqref{5.6}, there exist two positive integers \( j_{0,\ell_0} \) and \( k_1^{(0,\ell_0)} \) such that
	\[
	\xi_1 = t_1^{j_{0,\ell_0}} \left( \frac{i_{0,\ell_0} a_1}{p} + k_1^{(0,\ell_0)} \right).
	\]
	Substituting this into equation \eqref{5.6}, we obtain
	\begin{equation}\label{5.7}
		t_1^{j_{0,\ell_0}} \left( \frac{i_{0,\ell_0} a_1}{p} + k_1^{(0,\ell_0)} \right) + c'_2 n = t_1^{j_{n,\ell_n}} \left( \frac{i_{n,\ell_n} a_1}{p} + k_1^{(n,\ell_n)} \right), \quad (n, \ell_n)^* \in \mathbb{Z}^2.
	\end{equation}
	We claim that for any \( (n, \ell_n)^* \in \{0, 1, \cdots, p-1\} \times \mathbb{Z} \), there are exactly \( p-1 \) elements in \( \{j_{n,\ell_n}\}_{n=0}^{p-1} \) that are equal to $1$, and the remaining one is greater than $1$.
	
	Let \( n \in \{1, 2,\cdots,p-1\} \). If \( j_{n,\ell_n} \leq j_{0,\ell_0} \), then \( j_{n,\ell_n} = 1 \) and \( t_1 \in p(\mathbb{Z} \setminus p\mathbb{Z}) \). Otherwise, using equation \eqref{5.7}, we obtain
	\[
	c'_2 n = t_1^{j_{n,\ell_n}} \left( \frac{i_{n,\ell_n} a_1}{p} + k_1^{(n,\ell_n)} \right) - t_1^{j_{0,\ell_0}} \left( \frac{i_{0,\ell_0} a_1}{p} + k_1^{(0,\ell_0)} \right) \in p\mathbb{Z},
	\]
	which leads to a contradiction since \( c'_2 n \in \mathbb{Z} \setminus p\mathbb{Z} \). If \( j_{n,\ell_n} > j_{0,\ell_0} \), then \( j_{n,\ell_n} \geq 2 \) and we have
	\[
	t_1^{j_{0,\ell_0}} \left( \frac{i_{0,\ell_0} a_1}{p} + k_1^{(0,\ell_0)} \right) = t_1^{j_{n,\ell_n}} \left( \frac{i_{n,\ell_n} a_1}{p} + k_1^{(n,\ell_n)} \right) -c'_2 n  \in \mathbb{Z} \setminus p\mathbb{Z}.
	\]
	This implies \( j_{0,\ell_0} = 1 \) and \( t_1 \in p(\mathbb{Z} \setminus p\mathbb{Z}) \). Using \eqref{5.6} again, for any distinct \( n_1, n_2 \in \{1, 2, \cdots, p-1\} \), we obtain
	\[
	(n_1 - n_2) c'_2 \in \left(t_1^{j_{n_1,\ell_{n_1}}} \left( \frac{\{1, 2, \cdots, p-1\}}{p} + \mathbb{Z} \right) - t_1^{j_{n_2,\ell_{n_2}}} \left( \frac{\{1, 2, \cdots, p-1\}}{p} + \mathbb{Z} \right)\right).
	\]
	Together with \( c'_2 \in \mathbb{Z} \setminus p\mathbb{Z} \), this leads to the conclusion that \( \min\{ j_{n_1, \ell_{n_1}}, j_{n_2, \ell_{n_2}} \} = 1 \).
	The arbitrariness of \( n_1 \) and \( n_2 \) tells us that at least \( p-1 \) elements in the set \( \{ j_{n,\ell_n} \}_{n=0}^{p-1} \) must be equal to 1. If they are all equal to 1, let \( hc'_2 = -t_1 \frac{i_{0,\ell_0} a_1}{p} \pmod{p} \) for some \( h \in \{1, 2, \cdots, p-1\} \). Then,
	\[
	t_1 \frac{i_{0,\ell_0} a_1}{p} + h c'_2 = t_1^{j_{h, \ell_{h}}} \left( \frac{i_{h, \ell_{h}} a_1}{p} + k_1^{(h, \ell_{h})} \right) \in \mathbb{Z} \setminus p\mathbb{Z},
	\]
	which leads to a contradiction, and the claim follows.
	
	Building on the previous claim, we assume that \( j_{0, \ell_0} > 1 \) and \( j_{1, \ell_1} = j_{2, \ell_2} = \cdots = j_{p-1, \ell_{p-1}} = 1 \) for all vectors \( (\ell_0, \ell_1, \cdots, \ell_{p-1})^* \in \mathbb{Z}^p \). Let \( n = 1 \) and \( \ell_1 \in \{0, 1,\cdots,p-1\} \). It follows from \eqref{5.6} that
	\[
	\xi_2 - \frac{c_1 }{p^{u}} - \ell_{1}=t_{2}^{j_{1,\ell_{1}}}\left(-c''\left(\frac{i_{1,\ell_{1}}a_{1}}{p}+k_{1}^{({1,\ell_{1}})}\right)+\left(\frac{i_{1,\ell_{1}}a_{2}}{p}+k_{2}^{(1,\ell_{1})}\right)\right).
	\]
	Through simple calculations, we get
	\begin{equation}\label{5.8}
		\frac{p^{u}\xi_{1}c_{1}+c_{1}c_{2}}{t_{1}c_{2}p^{u}}+\frac{p^{u}\xi_{2}-c_{1}}{t_{2}p^{u}}- \frac{\ell_1 }{t_{2}}
		=\frac{i_{1,\ell_{1}}a_{2}}{p}+k_{2}^{({1,\ell_{1}})}.
	\end{equation}
	Note that \( \ell_1 \in \{0, 1, \cdots, p-1\} \) and \( i_{1, \ell_1} \in \{1, \cdots, p-1\} \). Then, we can select two distinct integers \( \ell_1', \ell_1'' \in \{0, 1, \cdots, p-1\} \) such that \( i_{1, \ell_1'} = i_{1, \ell_1''} \). From \eqref{5.8},
	\[
	\frac{\ell_1' - \ell_1''}{t_2} = k_2^{(1, \ell_1')} - k_2^{(1, \ell_1'')} \in \mathbb{Z},
	\]
	which leads to a contradiction, thus completing the proof.
\end{proof}

\section{Examples}

In the final section, we present some examples to emphasize our key findings.
Firstly, we give a counterexample to refute \cite[Conjecture 4.2]{CWZ}.
\begin{exam}\label{ex6.1}
Define
\[
 M = \begin{bmatrix}
	6 & \frac{3}{4} \\
	0 & 6
\end{bmatrix},
D = \left\{
\begin{pmatrix} 0 \\ 0 \end{pmatrix},
\begin{pmatrix} 1 \\ 0 \end{pmatrix},
\begin{pmatrix} 0 \\ 1 \end{pmatrix}
\right\}
\quad \text{and} \quad
P = \begin{bmatrix}
	8 & 0 \\
	0 & 1
\end{bmatrix}.
\]
Then
\[
M_1 =PMP^{-1}= \begin{bmatrix}
	6 & 6 \\
	0 & 6
\end{bmatrix}
\text{and} \quad
D_1 =PD= \left\{
\begin{pmatrix} 0 \\ 0 \end{pmatrix},
\begin{pmatrix} 8 \\ 0 \end{pmatrix},
\begin{pmatrix} 0 \\ 1 \end{pmatrix}
\right\}.
\]
It is known that \( \mu_{M,D} \) and \( \mu_{M_{1}, D_{1}} \) have the same spectrality. Let
\[
L_1 = \left\{ \begin{pmatrix} 0 \\ 0 \end{pmatrix}, \begin{pmatrix} 2 \\ 34 \end{pmatrix}, 2\begin{pmatrix} 2 \\ 34 \end{pmatrix}\right\}.
\]
We can check $(M_1,D_1,L_1)$ forms a Hadamard triple. Thus \( \mu_{M,D} \) and \(\mu_{M_1,D_1}\) are both spectral measure. The spectral measure $\mu_{M, D}$ can be verified to satisfy all conditions of Theorem \ref{thm1.5}, while $c=\frac{3}{4}=\frac{1}{8}\times 6=\kappa \beta$ and $p\kappa=2\times \frac{1}{8}\notin \mathbb Z$. This implies that \cite[Conjecture 4.2]{CWZ} is false. As shown in Figs. \ref{Fractal-1} to \ref{Fractal-4}, these figures also illustrate the first four approximations of the Sierpinski fractal of \( \mu_{M, D} \).
\begin{figure}[ht]
	\centering
	\begin{minipage}{0.24\textwidth}
		\centering
		\includegraphics[width=\textwidth]{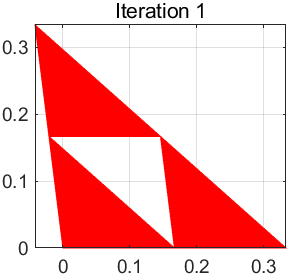}
		\caption{}
		\label{Fractal-1}
	\end{minipage}
	\hfill 
	\begin{minipage}{0.24\textwidth}
		\centering
		\includegraphics[width=\textwidth]{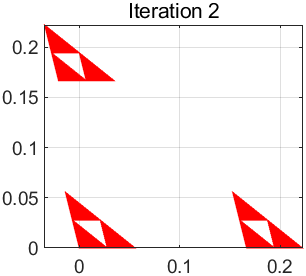}
		\caption{}
		\label{Fractal-2}
	\end{minipage}
		\hfill 
	\begin{minipage}{0.24\textwidth}
		\centering
		\includegraphics[width=\textwidth]{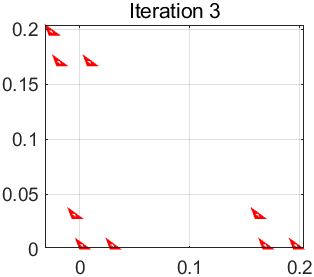}
		\caption{}
		\label{Fractal-3}
	\end{minipage}
	\hfill 
	\begin{minipage}{0.24\textwidth}
		\centering
		\includegraphics[width=\textwidth]{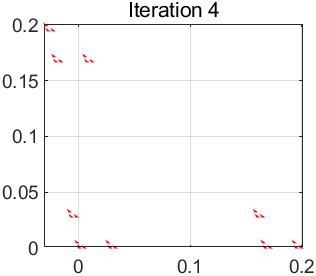}
		\caption{}
		\label{Fractal-4}
	\end{minipage}
\end{figure}
\end{exam}

\begin{exam}
	Let
	\begin{equation*}
		M=\begin{bmatrix} \rho_1^{-1} & c \\ 0 & \rho_2^{-1} \end{bmatrix}\subset M_{2}(\mathbb{R}),
\quad \text{and} \quad
D= \left\{
		\begin{pmatrix} 0 \\ 0 \end{pmatrix},
		\begin{pmatrix} 1 \\ 0 \end{pmatrix},
		\begin{pmatrix} 1 \\ 1 \end{pmatrix},
		\begin{pmatrix} 2 \\ 1 \end{pmatrix},
		\begin{pmatrix} 2 \\ 2 \end{pmatrix}
		\right\}.
	\end{equation*}
Then
	\begin{align*}
		\mathcal{Z}(m_{D})&=\{(x_{1}, x_{2})^{*}| 1+e^{-2\pi ix_{1}}+e^{-2\pi i(x_{1}+x_{2})}+e^{-2\pi i(2x_{1}+x_{2})}+e^{-2\pi i(2x_{1}+2x_{2})}=0\}\\
&=\bigcup_{j=1}^{4}\left(\frac{j}{5}\begin{pmatrix}
			1\\
			1\\
		\end{pmatrix}+\mathbb{Z}^{2}\right).
	\end{align*}
According to Theorem \ref{thm1.6}, the measure \( \mu_{M,D} \) is a spectral measure if and only if the matrix \( M \) satisfies one of the following conditions:
	\begin{enumerate}
		\item[$(i)$] \( \rho_1 \neq \rho_2 \), \( \rho_1^{-1} \in 5\mathbb{Z} \), and \( c'' \in \mathbb{Q}\setminus E_{\bm{a}} \), where $E_{\bm{a}}=E_{(1,1)^*}=\{\frac{s}{t}:t-s\in\mathbb{Z}\backslash5\mathbb{Z},\;\gcd(s,t)=1\}\cup\{0\} $.
		\item[$(ii)$] \( \rho_1\neq \rho_2 \), \( \rho_1^{-1}, \rho_2^{-1} \in 5\mathbb{Z} \), and \( c'' = \frac{c_{1}}{c_{2}} \in E_{\bm{a}} \), \( c_{2} \in 5^{\ell}(\mathbb{Z} \setminus 5\mathbb{Z}) \), \( 5^{\ell+1} \mid (\rho_1^{-1} - \rho_2^{-1}) \) for some \( \ell \in \mathbb{N} \).
		\item[$(iii)$] \( \rho_1 = \rho_2 \in 5\mathbb{Z} \) and \( c \in \{ \frac{s}{t} : s \in 5\mathbb{Z}, \gcd(s, t) = 1 \} \cup \{0\} \).
	\end{enumerate}
\end{exam}

We present the following example to illustrate Theorems \ref{thm1.7} and \ref{thm1.8}.
\begin{exam}\label{exam3}
	Let
	\[
	D=\left\{
	\begin{pmatrix} 0 \\ 0 \end{pmatrix},
	\begin{pmatrix} 1 \\ 0 \end{pmatrix},
	\begin{pmatrix} 1 \\ -1 \end{pmatrix},
	\begin{pmatrix} 2 \\ -1 \end{pmatrix},
	\begin{pmatrix} 2 \\ -2 \end{pmatrix}
	\right\},
	\]
	and
	\[
	M_1 = \begin{bmatrix}
		\sqrt{5} & \frac{\sqrt{5}}{3} \\
		0 & \sqrt{5}
	\end{bmatrix},\;
	M_2 = \begin{bmatrix}
		\frac{3\sqrt{5}}{5} & \frac{\sqrt{5}}{25}   \\
		0 & \frac{3\sqrt{5}}{5}
	\end{bmatrix},\;
	M_3 = \begin{bmatrix}
		\frac{3\sqrt{5}}{5} & 1   \\
		0 & \frac{3\sqrt{5}}{5}
	\end{bmatrix},\;
	M_4 = \begin{bmatrix}
		\sqrt{7}  & \sqrt{7}\\
		0 & \sqrt{7}
	\end{bmatrix}.
	\]
Then by Theorems \ref{thm1.7} and \ref{thm1.8}, \( L^2(\mu_{M_1,D}) \) admits an infinite set of orthogonal exponential functions, while \( L^2(\mu_{M_2,D}) \) contains an arbitrary number of orthogonal exponential functions. Moreover, there can be at most $5$ mutually orthogonal exponential functions in both \( L^2(\mu_{M_3,D}) \) and \( L^2(\mu_{M_4,D}) \), with $5$ being the maximum possible. Figs. \ref{Fractal-5} to \ref{Fractal-8} show the second iterative fractal graphs of the measure \( \mu_{M_1, D} \), \( \mu_{M_2, D} \), \( \mu_{M_3, D} \), and \( \mu_{M_4, D} \), respectively.
\begin{figure}[ht]
	\centering
	\begin{minipage}{0.24\textwidth}
		\centering
\includegraphics[width=\textwidth]{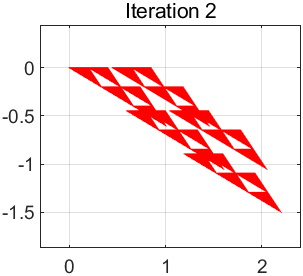}
		\caption{}
		\label{Fractal-5}
	\end{minipage}
	\hfill 
	\begin{minipage}{0.24\textwidth}
		\centering
\includegraphics[width=\textwidth]{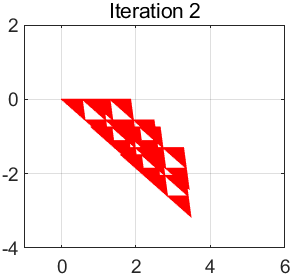}
		\caption{}
		\label{Fractal-6}
	\end{minipage}
	\begin{minipage}{0.24\textwidth}
		\centering
\includegraphics[width=\textwidth]{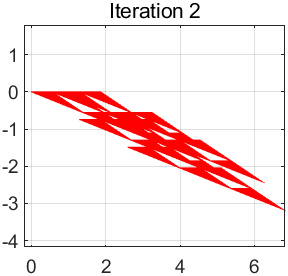}
		\caption{}
		\label{Fractal-7}
	\end{minipage}
	\hfill 
	\begin{minipage}{0.24\textwidth}
		\centering
		\includegraphics[width=\textwidth]{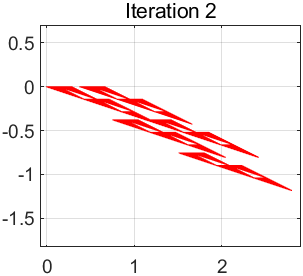}
		\caption{}
		\label{Fractal-8}
	\end{minipage}
\end{figure}
\end{exam}

Finally, we propose two natural questions.

\begin{que}
In Theorem \ref{thm1.6}, if we replace \(d_1 = (d_{1,1}, 0)^*\) with \(d_1 = (d_{1,1}, d_{1,2})^*, d_{1,2}\neq \bm{0}\) in the condition \eqref{eq1.3}, does the conclusion still hold?
\end{que}

\begin{que}
How to extend our results to higher dimensions or Moran measure?
\end{que}

\section*{Acknowledgment}

\textbf{Competing interests.} The authors declare that they have no competing interests.

\textbf{Data availability.}
Data availability is not applicable to this article as no new data were created or analyzed in this study.

\end{document}